\newtheorem{theorem}{Theorem}[section]
\newtheorem{proposition}[theorem]{Proposition}
\newtheorem{lemma}[theorem]{Lemma}
\newtheorem{corollary}[theorem]{Corollary}
\theoremstyle{definition}
\newtheorem{definition}[theorem]{Definition}
\newtheorem{question}[theorem]{Question}
\newtheorem{remark}[theorem]{Remark}
\newtheoremstyle{principle}{}{}{\itshape}{}{\bfseries}{.}{.5em}{\thmnote{#3}#1}
\theoremstyle{principle}
\newtheorem*{principle}{}
\newtheoremstyle{case}{}{}{}{}{\itshape}{.}{.5em}{\thmnote{Case #3}#1}
\newcommand{\N}{\naturalnumbers}
\DeclareMathOperator{\res}{\upharpoonright}
\newcommand{\dom}{\operatorname{dom}}
\newcommand{\seq}[1]{\langle #1 \rangle}
\newcommand{\RCA}{\mathsf{RCA}}
\newcommand{\WKL}{\mathsf{WKL}}
\newcommand{\RT}{\mathsf{RT}}
\newcommand{\RTc}{\mathsf{rt}}
\newcommand{\COH}{\mathsf{COH}}
\newcommand{\SRT}{\mathsf{SRT}}
\newcommand{\WWKL}{\mathsf{WWKL}}
\newcommand{\D}{\mathsf{D}}
\newcommand{\LPO}{\mathsf{LPO}}
\newcommand{\LLPO}{\mathsf{LLPO}}
\newcommand{\ACC}{\mathsf{ACC}}
\newcommand{\NON}{\mathsf{NON}}
\newcommand{\CFI}{\mathsf{CFI}}
\newcommand{\K}{\mathsf{K}}
\newcommand{\sub}[1]{_{\textup{\tiny{\fontfamily{cmr}\selectfont #1}}}}
\newcommand{\cequiv}{\equiv\sub{c}}
\newcommand{\cred}{\leq\sub{c}}
\newcommand{\ncred}{\nleq\sub{c}}
\newcommand{\scred}{\leq\sub{sc}}
\newcommand{\scequiv}{\equiv\sub{sc}}
\newcommand{\ured}{\leq\sub{W}}
\newcommand{\uequiv}{\equiv\sub{W}}
\newcommand{\nured}{\nleq\sub{W}}
\newcommand{\sured}{\leq\sub{sW}}
\newcommand{\nsured}{\nleq\sub{sW}}
\newcommand{\suequiv}{\equiv\sub{sW}}
\newcommand{\uint}{{[0, 1]}}
\newcommand{\id}{\mathsf{id}}
\newcommand{\Baire}{\naturalnumbers^\naturalnumbers}
\newcommand{\hide}[1]{}
\newcommand{\mto}{\rightrightarrows}
\newcommand{\C}{\mathsf{C}}
\newcommand{\CC}{\mathsf{CC}}
\newcommand{\lpo}{\mathsf{LPO}}
\newcommand{\Sort}{\mathsf{Sort}}
\newcommand{\fe}[1]{#1^{\textsc{fe}}}
\newcommand{\bal}[1]{\mathsf{b}\text{-}#1}
\newcommand{\unbal}[1]{\mathsf{u}\text{-}#1}
\newcommand{\wub}[1]{\mathsf{wtu}\text{-}#1}
\newcommand{\dwub}[1]{\Delta^0_2\text{-}\mathsf{wtu}\text{-}#1}
\newcommand{\naturalnumbers}{\omega}
\newcommand{\oldoverline}[1]{\widehat{#1}}
\renewcommand{\leq}{\leqslant}
\renewcommand{\geq}{\geqslant}
\renewcommand{\nleq}{\nleqslant}
\renewcommand{\phi}{\varphi}
\newcommand{\converges}{\mathord{\downarrow}}
\newcommand{\comp}{\mathbin{\star}}
\newcommand{\limprob}{\mathsf{lim}}
\begin{document}

\title[Ramsey's theorem and products in the Weihrauch degrees]{Ramsey's theorem and products\\ in the Weihrauch degrees}

\author[Dzhafarov]{Damir D. Dzhafarov}
\address{Department of Mathematics\\
University of Connecticut\\
Storrs, Connecticut, U.S.A.}
\email{damir@math.uconn.edu}

\author[Goh]{Jun Le Goh}
\address{Department of Mathematics\\
Cornell University\\
Ithaca, New York, U.S.A.}
\email{jg878@cornell.edu}

\author[Hirschfeldt]{Denis R. Hirschfeldt}
\address{Department of Mathematics\\
University of Chicago\\
Chicago, Illinois, U.S.A.}
\email{drh@math.uchicago.edu}

\author[Patey]{Ludovic Patey}
\address{Institut Camille Jordan\\
Universit\'e Claude Bernard Lyon 1\\
Lyon, France}
\email{ludovic.patey@computability.fr}

\author[Pauly]{Arno Pauly}
\address{Department of Computer Science\\
Swansea University\\
Swansea, U.K.}
\email{a.m.pauly@swansea.ac.uk}

\thanks{Dzhafarov was supported by grants DMS-1400267 and DMS-1854355 from the National Science Foundation of the United States and a Collaboration Grant for Mathematicians from the Simons Foundation. Goh was supported by NSF grant DMS-1161175. Hirschfeldt was supported by grants
DMS-1101458, DMS-1600543, and DMS-1854279 from the National Science Foundation of the United
States and a Collaboration Grant for Mathematicians from the Simons
Foundation. All authors thank the Leibniz-Zentrum f\"{u}r Informatik at Schloss Dagstuhl, where the initial work for this project was conducted. We also thank Vasco Brattka for asking questions that motivated much of this work, and the anonymous referees for several highly useful comments.}

\begin{abstract}
We study the positions in the Weihrauch lattice of parallel products of various combinatorial principles related to Ramsey's theorem. Among other results, we obtain an answer to a question of Brattka, by showing that Ramsey's theorem for pairs ($\RT^2_2$) is Weihrauch-incomparable to the parallel product of the stable Ramsey's theorem for pairs and the cohesive principle ($\SRT^2_2 \times \COH$).
\end{abstract}

\maketitle

\section{Introduction}\label{s:intro}

Reverse mathematics is a foundational area of logic devoted to calibrating the precise axioms needed to prove a given theorem of ordinary mathematics. For a standard reference, see Simpson~\cite{Simpson-2009}. A particularly fruitful line of research in this endeavor has been looking at theorems from combinatorics, particularly Ramsey's theorem and its many variants. See Hirschfeldt~\cite{Hirschfeldt-2014} for an introduction to the area. One recent way of extending the scope of this analysis is to replace the traditional framework of reverse mathematics, which is provability in fragments of second-order arithmetic, by Weihrauch reducibility. The latter is a tool that has been widely deployed in computable analysis and complexity theory; see the recent survey article by Brattka, Gherardi, and Pauly~\cite{BGP-TA}. Recently it has gained prominence also in the study of computable combinatorics, and it is currently seeing a surge of activity; see, e.g.,~\cite{ADSS-2017, BR-2017, DDHMS-2016, Dzhafarov-2016, DPSW-2017, HJ-2016, HM-2017, HM-2019, MP-TA, Nichols-TA, Patey-2016c, Patey-2016}. See also Brattka~\cite{Brattka-bib} for an updated bibliography.

In this paper, we turn the lens of Weihrauch reducibility on various results concerning Ramsey's theorem and its products with other mathematical principles.
We begin with some background on Weihrauch reducibility and Ramsey's theorem.

\begin{definition}
	A \emph{problem} $\mathsf{P}$ is a partial multifunction from $2^\omega$ to $2^\omega$, written $\mathsf{P} : \mathop{\subseteq} 2^\omega \rightrightarrows 2^\omega$. We call each $X \in \dom(\mathsf{P})$ an \emph{instance of $\mathsf{P}$}, or \emph{$\mathsf{P}$-instance} for short, and each $Y \in \mathsf{P}(X)$ a \emph{solution to $X$ as an instance of $\mathsf{P}$}, or just a \emph{$\mathsf{P}$-solution to $X$}.
\end{definition}

\noindent In general, a problem may be a partial multifunction between other kinds of \emph{represented spaces}. We shall consider such problems in Section \ref{s:CFI}, and refer the reader to~\cite[Section 2]{BGP-TA} for definitions. Elsewhere in this paper, the above definition will be sufficient. (To be precise, we do work with
represented spaces, since we code objects such as colorings of
$n$-tuples of natural numbers as elements of Cantor space, but our
codings are transparent enough that we can safely ignore this
distinction, which we believe will improve clarity for most readers.)

We assume familiarity with standard computability-theoretic notation. For a partial function $\psi$, we write $\psi(x) \simeq y$ to mean that $\psi(x)$ is equal to $y$ if defined.

A broad class of problems comes from reverse mathematics, where a typical object of study is a mathematical principle of the syntactic form
\[
	(\forall X)[\phi(X) \to (\exists Y)[\theta(X,Y)]],
\]
where $\phi$ and $\theta$ are arithmetical formulas of second-order arithmetic. Such a principle gives rise to the problem whose instances are the sets $X$ such that $\phi(X)$ holds, and where the solutions to any such $X$ are the $Y$ such that $\theta(X,Y)$ holds. In general, the formulas $\phi$ and $\theta$ above need not be unique for a given principle, but in practice, each principle one studies has a natural such pair of formulas associated to it. We adopt this terminology for specifying problems in this paper.

\begin{definition}\label{D:reductions}
	Let $\mathsf{P}$ and $\mathsf{Q}$ be problems.
	\begin{enumerate}
		\item $\mathsf{Q}$ is \emph{computably reducible} to $\mathsf{P}$, written $\mathsf{Q} \cred \mathsf{P}$, if every instance $X$ of $\mathsf{Q}$ computes an instance $\widehat{X}$ of $\mathsf{P}$, such that for every solution $\widehat{Y}$ to $\widehat{X}$, we have that $X \oplus \widehat{Y}$ computes a solution $Y$ to $X$.
		\item $\mathsf{Q}$ is \emph{strongly computably reducible} to $\mathsf{P}$, written $\mathsf{Q} \scred \mathsf{P}$, if every instance $X$ of $\mathsf{Q}$ computes an instance $\widehat{X}$ of $\mathsf{P}$, such that every solution $\widehat{Y}$ to $\widehat{X}$ computes a solution $Y$ to $X$.
		\item $\mathsf{Q}$ is \emph{Weihrauch reducible} to $\mathsf{P}$, written $\mathsf{Q} \ured \mathsf{P}$, if there exist Turing functionals $\Phi$ and $\Psi$ such that for every instance $X$ of $\mathsf{Q}$, we have that $\Phi^X$ is an instance of $\mathsf{P}$, and for every solution $\widehat{Y}$ to $\Phi^X$ we have that $\Psi^{X \oplus \widehat{Y}}$ is a solution to $X$.
		\item $\mathsf{Q}$ is \emph{strongly Weihrauch reducible} to $\mathsf{P}$, written $\mathsf{Q} \sured \mathsf{P}$, if there exist Turing functionals $\Phi$ and $\Psi$ such that for every instance $X$ of $\mathsf{Q}$, we have that $\Phi^X$ is an instance of $\mathsf{P}$, and for every solution $\widehat{Y}$ to $\Phi^X$ we have that $\Psi^{\widehat{Y}}$ is a solution to $X$.
	\end{enumerate}
\end{definition}

We write $\mathsf{P} \cequiv \mathsf{Q}$ if $\mathsf{P} \cred \mathsf{Q}$ and $\mathsf{Q} \cred \mathsf{P}$, and similarly for the other reducibilities above. All of these reducibilities are transitive, so the resulting notions of equivalence are in fact equivalence relations, which yield degree structures in the usual way. Figure \ref{F:reductionsrelations} summarizes the relationships that hold between these reducibilities. We refer the reader to Hirschfeldt and Jockusch~\cite[Section 4.1]{HJ-2016} for a more thorough discussion of these reducibilities, and for various generalizations of them with applications to reverse mathematics.

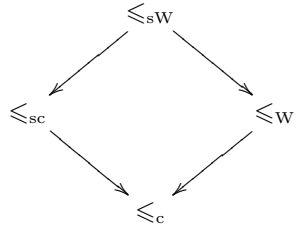
\begin{figure}\label{F:reductionsrelations}
\[
\xymatrix{
& \sured \ar[dl] \ar[dr] \\
\scred \ar[dr] & & \ured \ar[dl] \\
& \cred
}
\]
\caption[]{Relations between notions of reduction. An arrow from one reducibility to another means that whenever $\mathsf{Q}$ is reducible to $\mathsf{P}$ according to the first then it is also reducible according to the second. In general, no relations hold other than the ones shown.}
\end{figure}

The following two definitions list several important operations one can perform on problems.

\begin{definition}
	Let $\mathsf{P}_0$ and $\mathsf{P}_1$ be problems.
	\begin{enumerate}
		\item The \emph{(parallel) product} of $\mathsf{P}_0$ and $\mathsf{P}_1$, written $\mathsf{P}_0 \times \mathsf{P}_1$, is the problem whose instances are pairs $\seq{X_0,X_1}$ with $X_i$ a $\mathsf{P}_i$-instance, and where the solutions to $\seq{X_0,X_1}$ are all pairs $\seq{Y_0,Y_1}$ with $Y_i$ a $\mathsf{P}_i$-solution to $X_i$.
		\item The \emph{coproduct} of $\mathsf{P}_0$ with $\mathsf{P}_1$, written $\mathsf{P_0} \sqcup \mathsf{P_1}$, is the problem whose instances are all pairs $\seq{i,X}$ for $i < 2$ such that $X$ is a $\mathsf{P}_i$-instance, and where the solutions to $\seq{i,X}$ are just the $\mathsf{P}_i$-solutions to $X$.
		\item The \emph{meet} of $\mathsf{P}_0$ with $\mathsf{P}_1$, written $\mathsf{P_0} \sqcap \mathsf{P_1}$, is the problem whose instances are all pairs $\seq{X_0,X_1}$ such that for each $i < 2$, $X_i$ is a $\mathsf{P}_i$-instance, and the solutions to $\seq{X_0,X_1}$ are all pairs $\seq{i,Y}$ for $i < 2$ such that $Y$ is a $\mathsf{P}_i$-solution to $X_i$.
	\end{enumerate}
\end{definition}

It is easy to see that the above operations lift to the $\uequiv$-, $\suequiv$-, $\cequiv$-, and $\scequiv$-degrees. Furthermore, it is easy to see that the $\uequiv$-, $\cequiv$-, and $\scequiv$-degrees form a lattice with $\sqcup$ as join and $\sqcap$ as meet. Recently, Dzhafarov~\cite{Dzhafarov-TA} has shown that the $\suequiv$-degrees also form a lattice, with $\sqcap$ as meet but using a different operation for the join than $\sqcup$.

In this paper, all problems we consider will have some computable instance. It is easy to see that the coproduct of such problems is Weihrauch reducible to their parallel product.

\begin{definition}
	Let $\mathsf{P}_0$ and $\mathsf{P}_1$ be problems.
	\begin{enumerate}
		\item The \emph{composition} of $\mathsf{P}_1$ with $\mathsf{P}_0$, written
$\mathsf{P}_1 \circ \mathsf{P}_0$, is the problem whose instances are
all the $\mathsf{P}_0$-instances $X$ such that every solution to $X$
is a $\mathsf{P}_1$-instance, and whose solutions to such an instance
$X$ are all the $\mathsf{P}_1$-solutions to the
$\mathsf{P}_0$-solutions to $X$.
		\item The \emph{compositional product} of $\mathsf{P}_1$ with $\mathsf{P}_0$, written $\mathsf{P}_1 \comp \mathsf{P}_0$, is defined as $\max_{\ured} \{ \mathsf{Q}_1 \circ \mathsf{Q}_0 : \mathsf{Q}_i \ured \mathsf{P}_i\}$.
	\end{enumerate}
\end{definition}

\noindent The compositional product $\mathsf{P}_1 \comp \mathsf{P}_0$, first defined by Brattka, Gherardi, and Marcone~\cite[Definition 4.1]{BGM-2012}, captures exactly what can be achieved by applying $\mathsf{P}_0$ and $\mathsf{P}_1$ consecutively in series (possibly with some intermediate computation). Brattka and Pauly~\cite{BP-TA} showed that the compositional product is always defined. The definition of $\mathsf{P}_1 \comp \mathsf{P}_0$ above does not yield a specific problem, of course, but only a Weihrauch degree. We will not use this notion except in the context of Weihrauch reducibility, however, so this fact will pose no problems.

The following proposition summarizes the relationships between the above operations on problems.

\begin{proposition} \label{prop:rshp_meet_coproduct_parallel_product_compositional_product}
If $\mathsf{P}_0$ and $\mathsf{P}_1$ are problems with some computable instance, then
\[ \mathsf{P}_0 \sqcap \mathsf{P}_1 \ured \mathsf{P}_0 \sqcup \mathsf{P}_1 \ured \mathsf{P}_0 \times \mathsf{P}_1 \ured \mathsf{P}_0 \comp \mathsf{P}_1. \]
\end{proposition}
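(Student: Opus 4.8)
The plan is to verify the three reductions in the chain one at a time. The first is a trivial forward-and-back construction (in fact a strong Weihrauch reduction); the hypothesis that $\mathsf{P}_0$ and $\mathsf{P}_1$ each have a computable instance enters only in the second reduction, where it is used to pad an instance on the unused side; and the third reduction is obtained by unwinding the definition of the compositional product, exhibiting suitable problems $\mathsf{Q}_0 \ured \mathsf{P}_0$ and $\mathsf{Q}_1 \ured \mathsf{P}_1$ with $\mathsf{Q}_0 \circ \mathsf{Q}_1$ equal to $\mathsf{P}_0 \times \mathsf{P}_1$. For the first reduction, given a $\mathsf{P}_0 \sqcap \mathsf{P}_1$-instance $\seq{X_0,X_1}$ (so each $X_i$ is a $\mathsf{P}_i$-instance), the forward functional outputs the $\mathsf{P}_0 \sqcup \mathsf{P}_1$-instance $\seq{0, X_0}$; any solution to this is a $\mathsf{P}_0$-solution $Y$ to $X_0$, and the backward functional returns $\seq{0, Y}$, which is a solution to $\seq{X_0, X_1}$ as an instance of $\mathsf{P}_0 \sqcap \mathsf{P}_1$.

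For $\mathsf{P}_0 \sqcup \mathsf{P}_1 \ured \mathsf{P}_0 \times \mathsf{P}_1$, fix a computable $\mathsf{P}_0$-instance $C_0$ and a computable $\mathsf{P}_1$-instance $C_1$. Given a $\mathsf{P}_0 \sqcup \mathsf{P}_1$-instance $\seq{i,X}$, the forward functional reads $i$ and outputs the product instance $\seq{X, C_1}$ if $i = 0$ and $\seq{C_0, X}$ if $i = 1$. From any solution $\seq{Y_0, Y_1}$ to the resulting product instance, the backward functional --- which still has access to $i$ through the original input --- returns $Y_0$ if $i = 0$ and $Y_1$ if $i = 1$; in either case this is a $\mathsf{P}_i$-solution to $X$, as required. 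Without the computable-instance hypothesis there would be no uniform way to complete $X$ to an instance of $\mathsf{P}_0 \times \mathsf{P}_1$, so this is exactly where it is used.

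For $\mathsf{P}_0 \times \mathsf{P}_1 \ured \mathsf{P}_0 \comp \mathsf{P}_1$, let $\mathsf{Q}_1$ be the problem whose instances are pairs $\seq{X_0, X_1}$ with $X_i$ a $\mathsf{P}_i$-instance and whose solutions to $\seq{X_0,X_1}$ are the pairs $\seq{X_0, Y_1}$ with $Y_1$ a $\mathsf{P}_1$-solution to $X_1$, and let $\mathsf{Q}_0$ be the problem whose instances are pairs $\seq{X_0, Z}$ with $X_0$ a $\mathsf{P}_0$-instance and $Z$ any element of $2^\omega$, and whose solutions to $\seq{X_0, Z}$ are the pairs $\seq{Y_0, Z}$ with $Y_0$ a $\mathsf{P}_0$-solution to $X_0$. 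The reductions $\mathsf{Q}_1 \ured \mathsf{P}_1$ and $\mathsf{Q}_0 \ured \mathsf{P}_0$ are immediate: project onto the relevant coordinate, apply $\mathsf{P}_i$, and reattach the coordinate carried along, which the backward functional reads off the original input. Now every $\mathsf{Q}_1$-instance $\seq{X_0,X_1}$ is a legal instance of $\mathsf{Q}_0 \circ \mathsf{Q}_1$, since every $\mathsf{Q}_1$-solution $\seq{X_0, Y_1}$ has first coordinate a $\mathsf{P}_0$-instance and is hence a $\mathsf{Q}_0$-instance. Unwinding the definition of $\circ$, the solutions of $\mathsf{Q}_0 \circ \mathsf{Q}_1$ to $\seq{X_0,X_1}$ are precisely the pairs $\seq{Y_0, Y_1}$ with $Y_i$ a $\mathsf{P}_i$-solution to $X_i$; that is, $\mathsf{Q}_0 \circ \mathsf{Q}_1$ is literally the problem $\mathsf{P}_0 \times \mathsf{P}_1$. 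Since $\mathsf{Q}_0 \circ \mathsf{Q}_1$ belongs to the family whose $\ured$-maximum defines $\mathsf{P}_0 \comp \mathsf{P}_1$, we get $\mathsf{P}_0 \times \mathsf{P}_1 \uequiv \mathsf{Q}_0 \circ \mathsf{Q}_1 \ured \mathsf{P}_0 \comp \mathsf{P}_1$.

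None of the steps is deep, but two points call for care. In the second reduction, one must remember to invoke the computable-instance hypothesis: the conclusion can fail if all instances of $\mathsf{P}_{1-i}$ are noncomputable, since then there is no uniform way to pad. In the third reduction, the main thing to check is that the auxiliary problems are arranged so that every $\mathsf{Q}_1$-solution is a $\mathsf{Q}_0$-instance, which is what makes $\mathsf{Q}_0 \circ \mathsf{Q}_1$ total on $\dom(\mathsf{P}_0 \times \mathsf{P}_1)$ rather than defined only on a proper subset; choosing $\mathsf{Q}_0$ to accept an arbitrary second coordinate is precisely what secures this.
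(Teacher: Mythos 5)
Your proposal is correct, and for the one reduction the paper actually writes out (the last one) your auxiliary problems $\mathsf{Q}_0$ and $\mathsf{Q}_1$ are exactly the paper's $\mathsf{P}_0 \times \id$ and $\id \times \mathsf{P}_1$, so the argument is essentially identical; the first two reductions, which the paper treats as immediate, you verify by the same standard padding-with-a-computable-instance construction the paper alludes to.
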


To illustrate the definition of $\comp$, we provide a proof of the last reduction. Observe that $\mathsf{P}_0 \times \mathsf{P}_1$ is the same problem as $(\mathsf{P}_0 \times \id) \circ (\id \times \mathsf{P}_1)$. Since $\mathsf{P}_0 \times \id \ured \mathsf{P}_0$ and $\mathsf{P}_1 \times \id \ured \mathsf{P}_1$, we have that $(\mathsf{P}_0 \times \id) \circ (\id \times \mathsf{P}_1) \ured \mathsf{P}_0 \comp \mathsf{P}_1$, completing the proof.

Our focus here will be on Ramsey's theorem and its various combinatorial relatives. We begin with some definitions.

\begin{definition}\label{def:ramsey}
	Let $X$ be a subset of $\omega$ and $k$ a positive number.
	\begin{enumerate}
		\item $[X]^2 = \{ \seq{x,y} \in X \times X : x < y\}$.
		\item A \emph{$k$-coloring of pairs} (frequently just \emph{coloring}) is a function $c : [\omega]^2 \to k$. We write $c(x,y)$ instead of $c(\seq{x,y})$ for $\seq{x,y} \in [X]^2$. The coloring is \emph{stable} if for every $x$ there is an $i < k$ such that $c(x,y) = i$ for all sufficiently large $y$, in which case we write $\lim_y c(x,y) = i$.
		\item A set $H \subseteq X$ is \emph{homogeneous} for such a $c$ if $c \res [H]^2$ is constant. A set $Y \subseteq X$ is \emph{almost homogeneous} for $c$ if there is a finite set $F$ such that $Y - F$ is homogeneous for $c$.
		\item A set $L \subseteq X$ is \emph{limit-homogeneous} for $c$ if there is an $i < k$ such that $c(x,y) = i$ for all $x \in L$ and all sufficiently large $y \in L$, in which case we write $\lim_{y \in L} c(x,y) = i$. A set $Y \subseteq X$ is \emph{almost limit-homogeneous} for $c$ if there is a finite set $F$ such that $Y - F$ is limit-homogeneous for $c$.
	\end{enumerate}	
\end{definition}

\noindent If $i < k$ is the color witnessing that some set is homogeneous or limit-homogeneous then we say the set is homogeneous/limit-homogeneous \emph{with color $i$}. Note that if $c$ is stable and $L$ is limit-homogeneous for $c$ with color $i$ then also $\lim_y c(x,y) = i$ for all $x \in L$.

The following mathematical principles are well-known, and have been studied extensively in computability theory, reverse mathematics, and more recently, in the context of Weihrauch reducibility.
\begin{principle}[Ramsey's theorem for $k$-colorings of pairs ($\RT^2_k$)]
	For every coloring $c : [\omega]^2 \to k$, there is an infinite homogeneous set for $c$.
\end{principle}
\begin{principle}[Stable Ramsey's theorem for $k$-colorings of pairs ($\SRT^2_k$)]
	For every stable coloring $c : [\omega]^2 \to k$, there is an infinite homogeneous set for $c$.
\end{principle}
\begin{principle}[$\Delta^0_2$ $k$-partition subset principle ($\D^2_k$)]
	For every stable coloring $c : [\omega]^2 \to k$, there is an infinite limit-homogeneous set for $c$.
\end{principle}
\noindent (So, for concreteness, the instances of $\RT^2_k$ are all colorings $c : [\omega]^2 \to k$, and the solutions to a given such $c$ are its infinite homogeneous sets. Similarly for the other problems.) One additional principle that has been studied extensively alongside $\RT^2_2$ and $\SRT^2_k$ is the following:
\begin{principle}[Cohesive principle $(\COH$)]
	For every sequence $\seq{c_0,c_1,\ldots}$, where $c_i : \omega \to 2$ for each $i \in \omega$, there exists an infinite set $X$ that is almost homogeneous for each $c_i$.
\end{principle}

It is an easy exercise to see that $\D^2_k$ is strongly Weihrauch equivalent to the problem asserting that for every $\Delta^0_2$ $k$-partition $\seq{A_0,\ldots,A_{k-1}}$ of $\omega$, there exists an infinite subset $X$ of some $A_i$, and in the sequel, we will use whichever formulation is more convenient. It is obvious that $\D^2_k \sured \SRT^2_k \sured \RT^2_k$. While every computable instance of $\SRT^2_k$ has a $\Delta^0_2$ solution, Jockusch~\cite[Theorem 3.1]{Jockusch-1972} constructed a computable instance of $\RT^2_k$ with no $\Delta^0_2$ solution. Thus, $\RT^2_k \nured \SRT^2_k$. Dzhafarov~\cite[Corollary 3.3]{Dzhafarov-2016} showed that $\SRT^2_k \nured \D^2_k$. An independent proof can be found in~\cite[Corollary 6.12]{BR-2017}. Note that if $j < k$ then the version of each of the above principles for $j$-colorings is strongly Weihrauch reducible to the version for $k$-colorings. Patey~\cite{Patey-2016} showed that the converse is false; in fact, if $j < k$, then even $\D^2_k \ncred \RT^2_j$. Further relationships between $\SRT^2_2$, $\D^2_2$, and related principles under the various reductions from Definition \ref{D:reductions} have been investigated by Nichols~\cite{Nichols-TA}.

\begin{definition}
	For a problem $\mathsf{P}$, let $\fe{\mathsf{P}}$ be the problem whose instances are the same as those of $\mathsf{P}$, but such that $Y$ is a $\fe{\mathsf{P}}$-solution to $X$ if there is a $\mathsf{P}$-solution $Z$ to $X$ such that $Y =^* Z$ (i.e., such that $Y$ and $Z$ agree on a cofinite domain).	
\end{definition}

Thus, for instance, $\fe{(\D^2_k)}$ asserts that every stable coloring $c : [\omega]^2 \to k$ has an infinite almost limit-homogeneous set. For some well-behaved principles $\mathsf{P}$, we can express $\fe{\mathsf{P}}$ in terms of the \emph{implication operation} introduced by Brattka and Pauly in~\cite[Section 3.3]{BP-TA}. In lieu of a definition, we use the following property (see~\cite[Theorem 3.13]{BP-TA}): for problems $\mathsf{P}$ and $\mathsf{Q}$, the infimum $\inf_{\ured} \{\mathsf{R} : \mathsf{P} \ured \mathsf{Q} \comp \mathsf{R}\}$ exists and is Weihrauch equivalent to $\mathsf{Q} \to \mathsf{P}$. We also recall the following \emph{choice principle} (see~\cite[Section 7]{BGP-TA}).

\begin{definition}
	$\C_{\mathbb{N}}$ is the problem whose instances are functions $e : \omega^2 \to 2$ such that
	\begin{itemize}
		\item for all $x$, $e(x,0) = 0$ and there is at most one $s$ with $e(x,s) \neq e(x,s+1)$;
		\item there is at least one $x$ with $e(x,s) = 0$ for all $s$.
	\end{itemize}
	A solution to such an $e$ is any $x \in \omega$ such that $e(x,s) = 0$ for all $s$.
\end{definition}
\noindent Thus the instances of $\C_{\mathbb{N}}$ are enumerations of sets with nonempty complements, and the solutions are the elements of these complements.

\begin{proposition}
	Let $\mathsf{P} \in \{\RT^2_k, \SRT^2_k\}$. Then $\fe{\mathsf{P}} \uequiv \mathsf{C}_{\mathbb{N}} \rightarrow \mathsf{P}$.
\end{proposition}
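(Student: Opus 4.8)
The plan is to apply the characterization of $\to$ recalled above with $\mathsf{Q}=\mathsf{C}_{\mathbb{N}}$: since $\mathsf{C}_{\mathbb{N}}\to\mathsf{P}\uequiv\inf_{\ured}\{\mathsf{R}:\mathsf{P}\ured\mathsf{C}_{\mathbb{N}}\comp\mathsf{R}\}$, it suffices to prove (i) that $\mathsf{P}\ured\mathsf{C}_{\mathbb{N}}\comp\fe{\mathsf{P}}$, so that $\fe{\mathsf{P}}$ lies in the set whose infimum is being taken, giving $\mathsf{C}_{\mathbb{N}}\to\mathsf{P}\ured\fe{\mathsf{P}}$; and (ii) that $\fe{\mathsf{P}}$ is a $\ured$-lower bound of that set, i.e.\ $\fe{\mathsf{P}}\ured\mathsf{R}$ whenever $\mathsf{P}\ured\mathsf{C}_{\mathbb{N}}\comp\mathsf{R}$, giving $\fe{\mathsf{P}}\ured\mathsf{C}_{\mathbb{N}}\to\mathsf{P}$. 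In both parts I use the description of $\comp$ as ``apply the right-hand problem, then compute, then apply the left-hand problem, then compute'': concretely, $\mathsf{P}\ured\mathsf{C}_{\mathbb{N}}\comp\mathsf{R}$ amounts to having functionals $\Phi_0,\Phi_1,\Psi$ such that for every $\mathsf{P}$-instance $c$, $\Phi_0^c$ is an $\mathsf{R}$-instance; $\Phi_1^{c\oplus\widehat{Y}}$ is a $\mathsf{C}_{\mathbb{N}}$-instance for every $\mathsf{R}$-solution $\widehat{Y}$ to $\Phi_0^c$; and $\Psi^{c\oplus\widehat{Y}\oplus n}$ is a $\mathsf{P}$-solution to $c$ for every such $\widehat{Y}$ and every $\mathsf{C}_{\mathbb{N}}$-solution $n$ to $\Phi_1^{c\oplus\widehat{Y}}$.

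For (i), let $c$ be a $\mathsf{P}$-instance, i.e.\ a coloring $[\omega]^2\to k$ (stable, when $\mathsf{P}=\SRT^2_k$). Feeding $c$ to $\fe{\mathsf{P}}$ returns a set $Y$ with $Y =^* Z$ for some infinite set $Z$ homogeneous for $c$, say with color $i<k$; then $Y$ is infinite, and $c(x,y)=i$ holds for all $x,y\in Y$ with $n\leq x<y$ whenever $n>\max(Y\setminus Z)$. From $c\oplus Y$ we enumerate the set $B$ of all pairs $\seq{n,i}\in\omega\times k$ for which there exist $x,y\in Y$ with $n\leq x<y$ and $c(x,y)\neq i$; by the previous sentence $B$ omits at least one pair, so, after a computable identification of $\omega\times k$ with $\omega$, it is a legitimate $\mathsf{C}_{\mathbb{N}}$-instance. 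Applying $\mathsf{C}_{\mathbb{N}}$ produces a pair $\seq{n,i}\notin B$, and then $Y\cap[n,\infty)$, which is computable from $c\oplus Y\oplus\seq{n,i}$, is an infinite set homogeneous for $c$, hence a $\mathsf{P}$-solution to $c$. Running $\fe{\mathsf{P}}$, then computing $B$, then running $\mathsf{C}_{\mathbb{N}}$, then computing $Y\cap[n,\infty)$ therefore witnesses $\mathsf{P}\ured\mathsf{C}_{\mathbb{N}}\comp\fe{\mathsf{P}}$. (Stability plays no role in this argument beyond ensuring that $c$ is a legal instance when $\mathsf{P}=\SRT^2_k$.)

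For (ii), fix $\mathsf{R}$ with $\mathsf{P}\ured\mathsf{C}_{\mathbb{N}}\comp\mathsf{R}$ and let $\Phi_0,\Phi_1,\Psi$ be as above. We reduce $\fe{\mathsf{P}}$ to $\mathsf{R}$ as follows. Given a $\fe{\mathsf{P}}$-instance $c$, send it to $\mathsf{R}$ as $\Phi_0^c$; given an $\mathsf{R}$-solution $\widehat{Y}$, form the legal $\mathsf{C}_{\mathbb{N}}$-instance $e:=\Phi_1^{c\oplus\widehat{Y}}$, let $A$ be the set it enumerates, and put $n_0:=\min(\omega\setminus A)$, so that $n_0$ is a $\mathsf{C}_{\mathbb{N}}$-solution to $e$ and $Z:=\Psi^{c\oplus\widehat{Y}\oplus n_0}$ is an infinite set homogeneous for $c$. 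We cannot compute $n_0$ from $c\oplus\widehat{Y}$, but $n_s:=\min\{x:e(x,s)=0\}$ is a computable, nondecreasing approximation converging to $n_0$. Build $W\in 2^\omega$ in stages, committing its bits in increasing order: at stage $s$, let $m$ be the least uncommitted position, and while $\Psi^{c\oplus\widehat{Y}\oplus n_s}(m)$ halts within $s$ steps, commit bit $m$ with that value and increment $m$. Since $n_s$ is nondecreasing and bounded by $n_0$, it changes only finitely often; let $s_0$ be the last stage at which it changes (so $n_s=n_0$ for all $s\geq s_0$) and let $m^*$ be the finite number of bits committed strictly before stage $s_0$. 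Every bit $m\geq m^*$ is then committed at some stage $\geq s_0$, where the oracle used is $c\oplus\widehat{Y}\oplus n_0$, so it receives the value $Z(m)$; and every bit is eventually committed because $Z=\Psi^{c\oplus\widehat{Y}\oplus n_0}$ is total. Hence $W$ is total and $W =^* Z$, i.e.\ $W$ is a $\fe{\mathsf{P}}$-solution to $c$; since this construction is uniform in $c\oplus\widehat{Y}$, it yields $\fe{\mathsf{P}}\ured\mathsf{R}$, completing the proof.

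I expect part (ii) to be the real work: the delicate point is the bookkeeping showing that the set $W$ assembled while chasing the moving approximation $n_s$ agrees with the genuine solution $\Psi^{c\oplus\widehat{Y}\oplus n_0}$ on all but the finitely many bits committed before $n_s$ settles --- which is precisely where the cofinite slack built into $\fe{\mathsf{P}}$ gets used, and without which the construction produces nothing useful. A smaller point to check carefully in each part is that the c.e.\ set one codes as a $\mathsf{C}_{\mathbb{N}}$-instance genuinely has nonempty complement: in (i) this is the ``almost homogeneous'' property of $\fe{\mathsf{P}}$-solutions, and in (ii) it is exactly the second-stage requirement on $\Phi_1$ coming from the reduction $\mathsf{P}\ured\mathsf{C}_{\mathbb{N}}\comp\mathsf{R}$.
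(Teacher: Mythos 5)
Your proposal is correct and follows essentially the same route as the paper: both directions reduce to the same two claims (that $\mathsf{P} \ured \C_{\mathbb{N}} \comp \fe{\mathsf{P}}$ via coding the ``point past which $Y$ is homogeneous'' as a $\C_{\mathbb{N}}$-instance, and that $\fe{\mathsf{P}} \ured \mathsf{R}$ whenever $\mathsf{P} \ured \C_{\mathbb{N}} \comp \mathsf{R}$ via chasing the monotone approximation to the true $\C_{\mathbb{N}}$-answer and absorbing the finitely many wrong bits into the $=^*$ slack). The only differences are cosmetic: you code the $\C_{\mathbb{N}}$-instance by pairs $\seq{n,i}$ where the paper uses a $\Pi^{0,Y}_1$ subset of $Y$, and you work with the three-functional normal form of $\comp$ where the paper unfolds an arbitrary $\mathsf{Q} \ured \C_{\mathbb{N}}$.
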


\begin{proof}
	To show that $\fe{\mathsf{P}} \ured \mathsf{C}_{\mathbb{N}} \rightarrow \mathsf{P}$, it suffices to show that for any $\mathsf{R}$ such that $\mathsf{P} \ured \mathsf{C}_{\mathbb{N}} \comp \mathsf{R}$, we have that $\fe{\mathsf{P}} \ured \mathsf{R}$. Equivalently, we will show that for any $\mathsf{Q} \ured \mathsf{C}_{\mathbb{N}}$ and $\mathsf{R}$ such that $\mathsf{P} \ured \mathsf{Q} \circ \mathsf{R}$, we have that $\fe{\mathsf{P}} \ured \mathsf{R}$. Let $\Phi$ and $\Psi$ witness that $\mathsf{P} \ured \mathsf{Q} \circ \mathsf{R}$. Let $\Gamma$ and $\Delta$ witness that $\mathsf{Q} \ured \mathsf{C}_{\mathbb{N}}$. We describe a uniform procedure for reducing $\fe{\mathsf{P}}$ to $\mathsf{R}$. Given a $\mathsf{P}$-instance $c$, we use $\Phi$ to convert this to an instance $X$ of $\mathsf{Q} \circ \mathsf{R}$. Any $\mathsf{R}$-solution $Y$ to $X$ is also a $\mathsf{Q}$-instance, so we can use $\Gamma$ to convert it into an instance $Z$ of $\C_{\mathbb{N}}$. More precisely, $\Gamma^Y$ enumerates a set $\overline{Z}$ such that $Z \neq \emptyset$. And given any $\C_{\mathbb{N}}$-solution to $Z$, i.e., a point $x \in Z$, $\Delta(\seq{Y,x})$ is a $\mathsf{Q}$-solution to $Y$. Hence $\Psi(\seq{c,\Delta(\seq{Y,x})})$ must be a solution to $\mathsf{P}$. Thus, to uniformly compute a $\fe{\mathsf{P}}$-solution $H$ to $c$ from a given $\mathsf{R}$-solution $Y$ to $X$, we proceed as follows. To determine $H(n)$, we choose the least $x$ not yet enumerated by $\Gamma^Y$ at stage $n$, and wait for $x$ either to be enumerated, in which case we let $H(n)=0$, or for $\Psi(\seq{c,\Delta(\seq{Y,x})})(n)$ to converge, in which case we let $H(n) = \Psi(\seq{c,\Delta(\seq{Y,x})})(n)$. It is easy to see that $H$ is then an infinite set and is almost homogeneous for $c$.

	In the other direction, it suffices to show that $\mathsf{P} \ured \mathsf{C}_{\mathbb{N}} \comp \fe{\mathsf{P}}$. Consider the following uniform procedure. Given an instance $c : [\omega]^2 \to k$ of $\mathsf{P}$, we regard it also as an instance of $\fe{\mathsf{P}}$. Now, given any $\fe{\mathsf{P}}$-solution $Y$ to $c$, i.e., an infinite almost homogeneous set, define
	\[
		Z = \{ x \in Y : (\exists i < k)(\forall y \geq x)(\forall z > y)[y,z \in Y \to c(y,z) = i ] \}.
	\]
	Note that $Z$ agrees with $Y$ on all but finitely many elements, and so is in particular nonempty. Moreover, $Z$ is a $\Pi^{0,Y}_1$ subset of $\mathbb{N}$, and hence can be passed as an instance to $\C_{\mathbb{N}}$. Let $x$ be any $\C_{\mathbb{N}}$-solution to this instance. Then $\{y \in Y : y \geq x\}$ is a $\mathsf{P}$-solution to $c$.
	
	We use the above uniform procedure to show that $\mathsf{P} \ured \mathsf{C}_{\mathbb{N}} \comp \fe{\mathsf{P}}$. Let $G$ be a computable function that takes in a pair $\langle c,Y \rangle$ and produces an enumeration of the complement of $Z$, as defined above. Then the above procedure shows that $\mathsf{P} \ured \mathsf{C}_{\mathbb{N}} \circ G \circ (\id \times \fe{\mathsf{P}})$. Since $G \circ (\id \times \fe{\mathsf{P}}) \ured \fe{\mathsf{P}}$, this proves the desired result.
\end{proof}

%

\section{Ramsey's theorem for pairs}

Our starting point is the following summary of known facts concerning relationships between $\RT^2_2$, $\SRT^2_2$, and $\COH$ under Weihrauch reducibility.

\begin{theorem}\label{thm:RT22reds}\
	\begin{enumerate}
		\item $\SRT^2_2 \sqcup \COH \ured \RT^2_2 \ured \SRT^2_2 \comp \COH$;
		\item $\SRT^2_2 \sqcup \COH \ured \SRT^2_2 \times \COH \ured \SRT^2_2 \comp \COH$.
	\end{enumerate}
\end{theorem}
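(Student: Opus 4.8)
The plan is to verify the four displayed reductions, three of which are essentially immediate from the definitions and one of which (namely $\RT^2_2 \ured \SRT^2_2 \comp \COH$) encodes the classical Cholak--Jockusch--Slaman decomposition of $\RT^2_2$ into a cohesive step followed by a stable step. I would treat the two items together, since they share the endpoints $\SRT^2_2 \sqcup \COH$ on the left and $\SRT^2_2 \comp \COH$ on the right.

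First I would dispose of the leftmost reduction in each item. That $\SRT^2_2 \sqcup \COH \ured \RT^2_2$: given a coproduct instance $\seq{i, X}$, if $i = 0$ then $X$ is already a stable coloring, hence an $\RT^2_2$-instance, and any infinite homogeneous set for it is an $\SRT^2_2$-solution; if $i = 1$ then $X$ is a sequence $\seq{c_0, c_1, \dots}$ of $2$-colorings of $\omega$, which I convert into a single $2$-coloring $c$ of pairs by setting $c(x,y) = c_x(y)$ (padding so that $\dom$ works out), and then an infinite homogeneous — in fact any infinite almost homogeneous — set for $c$ is easily seen to be cohesive for the sequence, so a $\COH$-solution can be extracted uniformly. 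The same construction, read the other way, gives $\SRT^2_2 \sqcup \COH \ured \SRT^2_2 \times \COH$ once one recalls (as noted in the excerpt) that the coproduct of problems with computable instances Weihrauch-reduces to their parallel product: pair the given instance with a fixed computable instance of the other problem. For the middle reduction of item (2), $\SRT^2_2 \times \COH \ured \SRT^2_2 \comp \COH$ is an instance of Proposition~\ref{prop:rshp_meet_coproduct_parallel_product_compositional_product}, since $\SRT^2_2$ and $\COH$ both have computable instances. And the rightmost reduction of item (2) follows from the rightmost reduction of item (1) together with $\RT^2_2 \ured \SRT^2_2 \times \COH \ured \SRT^2_2 \comp \COH$ — so really only $\RT^2_2 \ured \SRT^2_2 \comp \COH$ needs genuine work.

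For that reduction, I would argue as follows. Given an arbitrary $2$-coloring $c : [\omega]^2 \to 2$, form the sequence $\seq{c_0, c_1, \dots}$ where $c_i(j) = c(i,j)$ for $j > i$ (say $c_i(j) = 0$ otherwise); this is a computable-in-$c$ $\COH$-instance, so it is the first stage of a compositional product. Let $X$ be a cohesive set for this sequence. Then the coloring $c \res [X]^2$ is stable: for each $i \in X$, cohesiveness of $X$ for $c_i$ means $\lim_{j \in X} c(i,j)$ exists. Relativizing, $c \res [X]^2$ (suitably recoded as a $2$-coloring of $[\omega]^2$ using an $X$-computable bijection $\omega \to X$) is an $\SRT^2_2$-instance computable from $X$, hence can be fed to the $\SRT^2_2$ stage; any infinite homogeneous set for it pulls back, via the bijection, to an infinite subset of $X$ that is homogeneous for $c$, i.e. an $\RT^2_2$-solution. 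Packaging this as $\mathsf{Q}_1 \circ \mathsf{Q}_0$ with $\mathsf{Q}_0 \ured \COH$ the "cohesive-plus-intermediate-computation" map (which is legitimate: the intermediate recoding of $c \res [X]^2$ into a standard-form stable coloring is computable from the $\COH$-output) and $\mathsf{Q}_1 \ured \SRT^2_2$, one gets $\RT^2_2 \ured \SRT^2_2 \comp \COH$ directly from the definition of $\comp$ as $\max_{\ured}$ over such composites.

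The main obstacle, such as it is, lies in the bookkeeping of item (1)'s rightmost reduction: one must be careful that the "intermediate computation" genuinely is a computation, i.e. that passing from a cohesive set $X$ to a standard-form $\SRT^2_2$-instance (a total stable $2$-coloring of $[\omega]^2$) is done by a fixed Turing functional applied to $X$, and that the bijection $\omega \to X$ used to recode is $X$-computable and its inverse is used to pull solutions back — none of which is deep, but all of which must be said precisely for the $\comp$ machinery to apply. Everything else is routine unwinding of definitions, and since the theorem is explicitly billed as "a summary of known facts," no novelty is expected; the value of the write-up is in having the Weihrauch-degree versions of these classical implications recorded cleanly.
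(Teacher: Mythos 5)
Your reduction witnessing $\COH \ured \RT^2_2$ is exactly the erroneous argument from the proof of Cholak--Jockusch--Slaman's Lemma~7.11 that the paper's own proof explicitly warns about, and which is why the paper cites the proof of their Theorem~12.5 and the corrigendum instead of Lemma~7.11 alone. If $c(x,y) = c_x(y)$ and $H$ is infinite and homogeneous for $c$ with color $j$, all you learn is that $c_x(y) = j$ for $x < y$ both in $H$; that makes $H$ almost homogeneous for those $c_i$ with $i \in H$, but gives no information whatsoever about $c_i$ for $i \notin H$, so $H$ need not be cohesive. Concretely, let $c_0$ be the characteristic function of the even numbers and $c_i \equiv 0$ for $i \geq 1$: then $\{1,2,3,\ldots\}$ is homogeneous for $c$ with color $0$ but is not almost homogeneous for $c_0$. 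A correct uniform reduction instead sets $\sigma_x(i) = c_i(x)$ for $i < x$ and colors $c(x,y) = 1$ if and only if $\sigma_x \leq_{\mathrm{lex}} \sigma_y \res x$; on an infinite homogeneous set $H$ of either color the strings $\sigma_x \res (i+1)$, for $x \in H$ with $x > i$, are monotone in lexicographic order and hence stabilize, which gives almost homogeneity for every $c_j$ with $j \leq i$, and so cohesiveness. With that repair, $\SRT^2_2 \sqcup \COH \ured \RT^2_2$ goes through as you describe.

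Two further remarks. First, your sentence deriving one of the reductions from the chain $\RT^2_2 \ured \SRT^2_2 \times \COH \ured \SRT^2_2 \comp \COH$ invokes $\RT^2_2 \ured \SRT^2_2 \times \COH$, which is false---refuting it is the main point of this section of the paper, and it already fails for the simple reason that $\RT^2_2$ has a computable instance with no $\Delta^0_2$ solution while every computable instance of $\SRT^2_2 \times \COH$ has one. Nothing you need actually rests on this, since $\SRT^2_2 \times \COH \ured \SRT^2_2 \comp \COH$ follows from Proposition~\ref{prop:rshp_meet_coproduct_parallel_product_compositional_product} as you note, but the claim should be deleted. Second, your argument for $\RT^2_2 \ured \SRT^2_2 \comp \COH$ (pass to a cohesive set for the sections $c_i(\cdot) = c(i,\cdot)$, observe that the restricted coloring is stable, recode along the increasing enumeration of the cohesive set, and apply $\SRT^2_2$) is correct, and together with the bookkeeping you describe it matches the intended decomposition; the paper itself simply cites the Cholak--Jockusch--Slaman proofs for all of this rather than writing them out.
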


\begin{proof}
  Part (1) follows by the proof of Cholak, Jockusch, and Slaman~\cite[Lemma 7.11]{CJS-2001} that $\SRT^2_2 \wedge \COH$ and $\RT^2_2$ are equivalent in the formal system $\RCA_0$, together with the proof of their Theorem 12.5, which is needed because the argument that $\RT^2_2$ implies $\COH$ in the proof of Lemma 7.11 was not correct, as noted in~\cite{CJS-err} (see also~\cite[Section 5.2]{DDHMS-2016}). Part (2) follows from Proposition \ref{prop:rshp_meet_coproduct_parallel_product_compositional_product}.
\end{proof}

Our main motivation for this section is the following question, asked during the workshop ``Measuring the Complexity of Computational Content: Weihrauch Reducibility and Reverse Analysis'', at the Leibniz-Zentrum f\"{u}r Informatik at Schloss Dagstuhl in September, 2015.

\begin{question}[Brattka, see~\cite{dagstuhl-weihrauch}]\label{Q:main}
	What additional reductions hold between the problems $\SRT^2_2 \sqcup \COH$, $\SRT^2_2 \comp \COH$, $\SRT^2_2 \times \COH$, and $\RT^2_2$ in Theorem \ref{thm:RT22reds}?
\end{question}

We will answer this question by showing that $\RT^2_2$ and $\SRT^2_2 \times \COH$ are Weih\-rauch-incomparable, and hence there are no Weihrauch reductions between the above principles other than the ones given in Theorem \ref{thm:RT22reds}.

We begin by recalling some ancillary notions.

\begin{definition}\label{def:unbalanced}
	Let $c : [\omega]^2 \to k$ be a coloring, and let $X$ be a set.
	\begin{enumerate}
		\item The coloring $c$ is \emph{unbalanced on $X$} if for some $i < k$, every infinite homogeneous set for $c$ contained in $X$ has color $i$. If $c$ is not unbalanced on $X$, it is \emph{balanced on $X$}.
		\item The coloring $c$ \emph{avoids the color $i < k$ on $X$} if $c(x,y) \neq i$ for all $x,y \in X$.
	\end{enumerate}
\end{definition}

\noindent If, in the definition above, $X = \omega$, we shall say simply that $c$ is unbalanced / balanced / avoids the color $i$, without further qualification.

The following lemma will allow us to prove our main result, from which we will derive a number of consequences, including an answer to Question \ref{Q:main}.

\begin{lemma}\label{lem:RT22NON}
	Let $c : [\omega]^2 \to k$ be a computable coloring, $A$ an infinite computable set, and $\mathcal{C} \subseteq 2^{\omega}$ a nonempty $\Pi^0_1$ class of $k$-partitions of $A$. If, for every $\seq{P_0,\ldots,P_{k-1}} \in \mathcal{C}$, $c$ is unbalanced on $P_j$ for every $j < k$, then $c$ has a computable infinite homogeneous set.
\end{lemma}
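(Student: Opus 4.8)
The goal is to extract a computable infinite homogeneous set for $c$ from the hypothesis that $c$ is unbalanced on every part of every partition in $\mathcal{C}$. My plan is to build the homogeneous set by a finite-extension argument, using compactness of the $\Pi^0_1$ class $\mathcal{C}$ at each step to decide which color to commit to. The key observation is the following: for each $j < k$, let $i_j$ be the color such that every infinite homogeneous set for $c$ contained in $P_j$ has color $i_j$ — but the subtlety is that $i_j$ depends on the partition $\seq{P_0,\dots,P_{k-1}}$, so I first want to argue that by further shrinking (or by a pigeonhole over the finitely many possible values of $\seq{i_0,\dots,i_{k-1}}$) I may assume $\mathcal{C}$ decomposes into finitely many subclasses on each of which the tuple $\seq{i_0,\dots,i_{k-1}}$ is constant; since a finite union of $\Pi^0_1$ classes is $\Pi^0_1$ and at least one piece is nonempty, I may pass to a nonempty $\Pi^0_1$ subclass $\mathcal{C}'$ on which this tuple is a fixed $\seq{i_0,\dots,i_{k-1}}$.

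\textbf{The construction.} I would now build $H = \{h_0 < h_1 < \cdots\}$ computably, maintaining a nonempty $\Pi^0_1$ subclass $\mathcal{C}_s \subseteq \mathcal{C}'$ of partitions all of whose parts are compatible with the commitments made so far, together with a ``target part'' bookkeeping. At stage $s$, having chosen $h_0 < \cdots < h_{s-1}$ and knowing (from the previous stage) that along $\mathcal{C}_s$ these elements all lie in some fixed part $P_{j_s}$ with target color $i_{j_s}$, I search for the least $h_s > h_{s-1}$ such that $h_s$ is forced by $\mathcal{C}_s$ to lie in $P_{j_s}$ as well, and such that $c(h_t, h_s) = i_{j_s}$ for all $t < s$. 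The point of restricting to a single part with a fixed target color is that homogeneity of the eventual $H$ inside that part is then guaranteed to use exactly color $i_{j_s}$, so the coloring constraints $c(h_t,h_s)=i_{j_s}$ are consistent with continuing indefinitely — an infinite homogeneous subset of $P_{j_s}$ must exist (e.g. by $\RT^2_2$ applied externally, or just by the hypothesis that such a set exists and has color $i_{j_s}$), and every sufficiently long finite piece of it can be realized.

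\textbf{Why this terminates at each stage.} The crux is to show the search at stage $s$ always succeeds. Suppose not: then for every sufficiently large $n$, either no partition in $\mathcal{C}_s$ puts $n$ into $P_{j_s}$, or $c(h_t,n) \neq i_{j_s}$ for some $t<s$. By compactness, the set of $n$ that $\mathcal{C}_s$ permits in $P_{j_s}$ is co-c.e. and, crucially, contains an infinite homogeneous-for-$c$ set of color $i_{j_s}$ coming from any fixed $\seq{P_0,\dots,P_{k-1}} \in \mathcal{C}_s$ (here I use that $\mathcal{C}_s$ is nonempty, pick a member, take its part $P_{j_s}$, note $c$ is unbalanced on it with color $i_{j_s}$, and take an infinite homogeneous subset). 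Along that infinite set the values $c(h_t, n)$ are eventually all $i_{j_s}$ once $n$ is large — but this is exactly where I must be careful: $h_t$ need not lie in $P_{j_s}$ unless I arranged it, which I did by construction. So for large $n$ in that homogeneous set, $c(h_t,n) = i_{j_s}$ for all $t < s$, contradicting failure of the search. Hence the construction is total and computable, and $H$ is an infinite set homogeneous for $c$ with color $i_{j_s}$ (this color may change finitely often as $j_s$ changes, but we can stabilize $j_s$ after finitely many stages since there are only $k$ parts, or handle the dovetailing so that $j_s$ is eventually constant).

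\textbf{Main obstacle.} The delicate point is the interaction between ``which part $P_j$ the finite set $H{\restriction}s$ has been forced into'' and the requirement that the color be the uniform color $i_j$ of that part: I need to ensure that the commitments never paint me into a corner where $\mathcal{C}_s$ becomes empty or no admissible $h_s$ exists. This is handled by only ever committing $h_s$ to a part and color that some member of $\mathcal{C}_s$ already realizes together with an infinite homogeneous continuation — i.e., by always thinning $\mathcal{C}_s$ to a nonempty $\Pi^0_1$ class witnessing the commitment, which is possible precisely because of the unbalancedness hypothesis applied to that (still nonempty) subclass. Making the bookkeeping of $j_s$ and $\mathcal{C}_s$ fully effective, and arguing $j_s$ stabilizes, is the part that needs the most care, but it is routine once the key termination lemma above is in hand.
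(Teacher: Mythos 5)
Your proposal has genuine gaps, and the central one is fatal to the whole strategy. The crux of your termination argument is the claim that, because you arranged $h_t \in P_{j_s}$ and because some infinite $H^* \subseteq P_{j_s}$ is homogeneous for $c$ with color $i_{j_s}$, we get $c(h_t,n) = i_{j_s}$ for all large $n \in H^*$. This does not follow. Unbalancedness constrains only the colors of \emph{infinite homogeneous subsets} of $P_{j_s}$; it says nothing about the values $c(h_t,\cdot)$ for a fixed $h_t \in P_{j_s}$, because $\{h_t\} \cup H^{**}$ need not be homogeneous for any infinite $H^{**} \subseteq H^*$, and a non-homogeneous subset of $P_{j_s}$ violates nothing. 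It is entirely consistent with the hypotheses that $c(h_t,n) \neq i_{j_s}$ for all sufficiently large $n$, in which case your greedy search at stage $s$ never succeeds. This is exactly the difficulty the paper's proof is built around: its forcing conditions carry, for each part $j$ and each color $i$, a finite set $E_{i,j} \subseteq P_j$ on which $c$ \emph{avoids} color $i$, together with a reservoir $X$ of permitted extensions; and when the extension step fails, the conclusion is not a contradiction with unbalancedness but rather that $\lim_{y \in X} c(x,y)$ exists for every $x \in X$ and is computable from every member of a nonempty $\Pi^0_1$ class, whence the cone-avoidance basis theorem makes that limit coloring computable and yields a computable homogeneous set directly. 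Some substitute for that cone-avoidance step is unavoidable, and your outline contains nothing playing its role.

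Two further problems. First, your opening reduction --- passing to a nonempty $\Pi^0_1$ subclass $\mathcal{C}'$ on which the tuple $\seq{i_0,\ldots,i_{k-1}}$ is constant --- is unjustified: the set of partitions in $\mathcal{C}$ whose $j$th part has homogeneous color $i_j$ is cut out by a condition quantifying over all infinite subsets of $P_j$ (a $\Pi^1_1$-type condition), so the pieces of your decomposition are not $\Pi^0_1$ classes and the ``finite union of $\Pi^0_1$ classes'' argument does not apply. Second, even granting the decomposition, the construction is not computable as described: whether committing $h_s$ to part $j_s$ leaves $\mathcal{C}_{s+1}$ nonempty is a $\Pi^0_1$ question, and there need be no $n$ at all that is outright forced by $\mathcal{C}_s$ into a given part, so neither reading of your search step is effective. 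Note that the paper sidesteps all of this by arguing contrapositively: assuming $c$ has no computable infinite homogeneous set, a (non-effective) forcing construction produces a member of $\mathcal{C}$ with a part on which $c$ is balanced; the computable homogeneous set in the lemma's conclusion arises only from the failure of the density argument, not from a direct construction.
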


\begin{proof}
	Fix $c$ and $\mathcal{C}$, and suppose that $c$ has no computable infinite homogeneous set. We construct a set $G = \{G_{i,j} : i,j < k\}$, and exhibit a $\seq{P_0,\ldots,P_{k-1}} \in \mathcal{C}$, such that $G_{i,j} \subseteq P_j$ for all $i,j < k$, and $c$ avoids the color $i$ on $G_{i,j}$. We will furthermore satisfy the following requirement for each $n \in \omega$ and all $\alpha \in k^k$:
	\[
		\mathcal{R}_{n,\alpha} : (\exists j < k)(\exists x \geq n)[x \in G_{\alpha(j),j}].
	\]
	The claim is that $c$ is then balanced on some $P_j$. For if not, define $\alpha \in k^k$ by letting $\alpha(j)$ be the color $i < k$ such that every infinite homogeneous set for $c$ contained in $P_j$ has color $i$. Since $G$ satisfies $\mathcal{R}_{n,\alpha}$ for all $n$, there must be a $j < k$ such that $G_{\alpha(j),j}$ is infinite. Let $H$ be any infinite homogeneous set for $c$ contained in $G_{\alpha(j),j}$. As $c$ avoids the color $\alpha(j)$ on $G_{\alpha(j),j}$, it follows that $H$ has some other color than $\alpha(j)$, which is a contradiction since $G_{\alpha(j),j} \subseteq P_j$.
	
	The construction of $G$ is by a forcing notion whose conditions are tuples
	\[
		p = (\{E_{i,j} : i,j < k\}, X, \mathcal{D}),
	\]
	such that for all $i,j < k$:
	\begin{itemize}
		\item $E_{i,j}$ is a finite subset of $A$;
		\item $X$ is a computable infinite subset of $A$ such that $\max E_{i,j} < \min X$;
		\item for every $x \in X$, $c$ avoids the color $i$ on $E_{i,j} \cup \{x\}$;
		\item $\mathcal{D}$ is a nonempty $\Pi^0_1$ subclass of $\mathcal{C}$ such that for every $\seq{P_0,\ldots,P_{k-1}} \in \mathcal{D}$, $E_{i,j} \subseteq P_j$.
	\end{itemize}
	A condition $q = (\{F_{i,j} : i,j < k\}, Y, \mathcal{E})$ extends $p$ if $Y \subseteq X$, $\mathcal{E} \subseteq \mathcal{D}$, and $E_{i,j} \subseteq F_{i,j} \subseteq E_{i,j} \cup X$ for all $i,j < k$.
	
	Say a condition $p$ as above \emph{satisfies} $\mathcal{R}_{n, \alpha}$ if there are some $j < k$ and some $x \geq n$ such that $x \in E_{\alpha(j),j}$. We claim that the set of conditions satisfying $\mathcal{R}_{n,\alpha}$ is dense. Fix $p = (\{E_{i,j} : i,j < k\}, X, \mathcal{D})$. First, suppose there are some $\seq{Q_0,\ldots,Q_{k-1}} \in \mathcal{D}$, some $\ell < k$, and some $x \in X \cap Q_{\ell}$ such that $Y = \{y \in X : c(x,y) \neq \alpha(\ell)\}$ is infinite. Let $q = (\{F_{i,j} : i,j < k\}, Y, \mathcal{E})$, where $F_{\alpha(\ell),\ell} = E_{\alpha(\ell),\ell} \cup \{x\}$, $F_{i,j} = E_{i,j}$ for all $i,j < k$ with $i \neq \alpha(\ell)$ or $j \neq \ell$, and $\mathcal{E} = \{\seq{P_0,\ldots,P_{k-1}} \in \mathcal{D} : x \in P_\ell \}$. Then $q$ is an extension of $p$ satisfying $\mathcal{R}_{n,\alpha}$. So suppose now that there are no such $\seq{Q_0,\ldots,Q_{k-1}}$, $\ell$, and $x$. We derive a contradiction. The assumption implies that for every $x \in X$, $\lim_{y \in X} c(x,y)$ exists, since given any $\seq{Q_0,\ldots,Q_{k-1}} \in \mathcal{D}$, we have that $\lim_{y \in X} c(x,y) = \alpha(\ell)$ for the unique $\ell$ with $x \in Q_\ell$. So the map $g : X \to k$ defined by $g(x) = \lim_{y \in X} c(x,y)$ for all $x \in X$ is computable from every member of $\mathcal{D}$. By the cone-avoidance basis theorem (see, e.g.,~\cite[Theorem 2.1]{DJ-2009}), this implies that $g$ is computable. But then $c$ has a computable infinite homogeneous set, which we assumed it did not.
		
	To complete the proof, let $\mathcal{G} = \seq{q_0, q_1, \ldots}$ be a sufficiently generic sequence on our forcing poset, where for each $s$,
	\[
		q_s = (\{E^s_{i,j} : i,j < k\}, X^s, \mathcal{D}^s),
	\]
	and $q_s$ is extended by $q_{s+1}$. Define
	\[
		G_{i,j} = \bigcup_{s \in \omega} E_{i,j}^s
	\]
	for all $i,j < k$. Let $\seq{P_0,\ldots,P_{k-1}}$ be any element of $\bigcap_{s \in \omega} \mathcal{D}_s$, which is an intersection of a nested sequence of $\Pi^0_1$ classes and hence is nonempty. Then $G = \{G_{i,j} : i,j < k\}$ and $\seq{P_0,\ldots,P_{k-1}}$ have the desired properties.
\end{proof}

The following important problems arise frequently in the study of Weihrauch degrees.

\begin{definition}
\
	\begin{enumerate}
		\item $\LPO$ is the principle whose instances are all infinite binary sequences of the form $0^\omega$ or $0^n1^\omega$ for some $n \in \omega$, and the solutions are either the singleton $\{0\}$ if the instance is $0^\omega$, or $\{1\}$ if the instance is $0^n1^\omega$ for some $n$.
		\item $\NON$ is the principle whose instances are all sets, and the solutions to an instance $X$ are all sets $Y \nleq\sub{T} X$.
	\end{enumerate}
\end{definition}

Viewed as a $\Pi^1_2$ principle, $\NON$ is thus equivalent over $\RCA_0$ to the principle $\mathsf{AST}$ considered by Hirschfeldt, Shore, and Slaman~\cite[Section 6]{HSS-2009}. (See specifically~\cite[Theorem 6.3]{HSS-2009}.)

\begin{theorem}\label{T:NONLPORT2}
	$\LPO \times \NON \nured \RT^2_2$.
\end{theorem}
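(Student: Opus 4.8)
The plan is to assume, for a contradiction, that $\LPO\times\NON\ured\RT^2_2$, say via Turing functionals $\Phi,\Psi$, and to build an instance of $\LPO\times\NON$ on which the reduction fails. I fix the $\NON$-side of my instances to be $\emptyset$, so that a $\NON$-solution is just a noncomputable set, and for an $\LPO$-instance $p$ I write $c_p=\Phi^{\seq{p,\emptyset}}$. Since $\seq{p,\emptyset}$ is a legitimate $\LPO\times\NON$-instance, $c_p$ must be a total computable $2$-coloring of $[\omega]^2$, and it can have no computable infinite homogeneous set: from such a set $H$, the set $\Psi^{\seq{p,\emptyset,H}}$ would be computable, contradicting that its $\NON$-component must be noncomputable. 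Write $c=c_{0^\omega}$ and let $d(a,b)$ be the use of the $p$-side of the oracle in computing $c(a,b)=\Phi^{\seq{0^\omega,\emptyset}}(a,b)$; $d$ is computable.

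The crux is the following observation: \emph{if $c$ has an infinite homogeneous set $H$ on which $d$ is bounded, then the reduction fails.} Say $d<B$ on $[H]^2$. Then $H$ solves the $\RT^2_2$-instance $c=c_{0^\omega}$, so $\Psi^{\seq{0^\omega,\emptyset,H}}$ is an $\LPO\times\NON$-solution to $\seq{0^\omega,\emptyset}$, and in particular the component of it that outputs the $\LPO$-bit equals $0$; let $n_0$ bound the use of the $p$-side in that computation and put $m=\max(B,n_0)$. For each $(a,b)\in[H]^2$ the computation $\Phi^{\seq{0^m1^\omega,\emptyset}}(a,b)$ reads only the first $d(a,b)<m$ bits of $p$, all equal to $0$, hence agrees with $\Phi^{\seq{0^\omega,\emptyset}}(a,b)$; so $c_{0^m1^\omega}\res[H]^2=c\res[H]^2$ is constant and $H$ also solves the $\RT^2_2$-instance $c_{0^m1^\omega}$. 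Thus $\Psi^{\seq{0^m1^\omega,\emptyset,H}}$ must output $\LPO$-bit $1$. But $0^m1^\omega$ and $0^\omega$ agree on their first $n_0\le m$ bits, and the $\emptyset$- and $H$-sides are unchanged, so this bit is computed by the very same computation as for $\seq{0^\omega,\emptyset,H}$ and equals $0$ --- a contradiction. Hence it would suffice to produce an infinite homogeneous set for $c$ on which $d$ is bounded; the construction below does this (or else produces a computable homogeneous set for $c$, which is equally fatal), building the $\LPO$-instance along the way.

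To carry this out I would run a Mathias-style forcing built on the template of the proof of Lemma~\ref{lem:RT22NON}, constructing the $\LPO$-instance $p$ and the homogeneous set $H$ simultaneously. Conditions carry: a finite homogeneous stem $E$, all of whose pairs have small $p$-use and hence the same color under $c$ and under every $c_q$ agreeing with $0^\omega$ on a suitable initial segment; an infinite computable reservoir $X$ above $\max E$ all of whose elements can be adjoined to $E$ preserving homogeneity; a commitment recording how far $p$ has been fixed to equal $0$; and --- playing the role of the class $\mathcal{C}$ in Lemma~\ref{lem:RT22NON} --- a nonempty $\Pi^0_1$ class of ``candidate flip points'' $m$ for which $c_{0^m1^\omega}$ still admits an infinite homogeneous extension of the current stem. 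Besides the requirements forcing $H$ infinite, there is one diagonalization requirement: wait until the $\LPO$-bit component of $\Psi^{\seq{0^\omega,\emptyset,\cdot}}$ converges on the stem; if it returns $1$, keep $p=0^\omega$ and finish $H$ as any infinite homogeneous set for $c$ (the bit is then wrong); if it returns $0$, flip $p$ at a committed, sufficiently large candidate point $m$ lying past the use of that computation, and continue building $H$ as a homogeneous set for $c_{0^m1^\omega}$. At each density step one argues, exactly as in Lemma~\ref{lem:RT22NON}, that either the stem can be extended (with the candidate-flip-point class refined, still nonempty and $\Pi^0_1$) consistently with the current commitments, or else the relevant limit colors $\lim_{y\in X}c(x,y)$ are computable from every member of the residual class, hence --- by the cone-avoidance basis theorem --- computable, yielding a computable infinite homogeneous set for $c$ and contradicting the first paragraph. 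Genericity then delivers the required $H$ together with a legitimate $\LPO$-instance $p$, completing the contradiction.

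The step I expect to be the main obstacle is exactly this coordination: the flip point of $p$ cannot be chosen in advance but only after the stem is large enough to pin down the use of $\Psi$'s bit computation, while every extension of the stem must at once keep the coloring's $p$-use under the current commitment, keep the residual $\Pi^0_1$ class of candidate flip points nonempty, and respect the homogeneity already achieved. Checking --- in the manner of the density argument of Lemma~\ref{lem:RT22NON} --- that these constraints can always be satisfied together, with the failure case invariably producing a computable homogeneous set, is where the real work lies; granting that, the observation of the second paragraph closes the proof.
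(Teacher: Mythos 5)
Your opening reduction (fixing the $\NON$-side to $\emptyset$, so that any computable homogeneous set already defeats the reduction) and your ``key observation'' are both correct, and the observation is indeed the moral heart of the matter: it is essentially Case 3 of the paper's proof, where a finite homogeneous set $F$ with bounded $\Phi$- and $\Psi$-uses is extended after flipping the $\LPO$-instance past those uses. But the proposal stops exactly where the proof begins. Two specific problems. First, the ``nonempty $\Pi^0_1$ class of candidate flip points $m$ for which $c_{0^m1^\omega}$ still admits an infinite homogeneous extension of the current stem'' is not a $\Pi^0_1$ class: for a computable coloring, the statement that a given finite set extends to an infinite homogeneous set of a given color is not co-c.e.\ in $m$, so this object cannot be carried along in the forcing as described. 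Second, and more fundamentally, the dichotomy you lean on --- either the construction produces a stem/flip-point pair that works, or it produces a computable homogeneous set --- is precisely the content that needs proving, and your stem-first, flip-later order of quantifiers makes it false-looking: a stem $E$ built homogeneous for $c_{0^\omega}$ with a reservoir certified for $c_{0^\omega}$ gives no control whatsoever over $c_{0^m1^\omega}$ on the reservoir, and there is no reason $E$ should extend to an infinite homogeneous set of the post-flip coloring with the same color, nor that failure of this should yield anything computable.

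The paper resolves this by reversing the order of quantifiers. It forms the $\Pi^0_1$ class $\mathcal{C}$ of $2$-partitions $\seq{P_0,P_1}$ of $\omega$ on which $\Psi$ never asserts ``$S=0^\omega$'' over finite homogeneous subsets of the correctly colored part. If $\mathcal{C}\neq\emptyset$, Lemma \ref{lem:RT22NON} (the cone-avoidance forcing you cite) gives either a computable homogeneous set or a partition with a balanced part, and an infinite homogeneous set inside that part makes $\Psi$ give the wrong $\LPO$-answer with $S=0^\omega$ --- no flip needed. If $\mathcal{C}=\emptyset$, compactness yields a single bound $m$ and finite homogeneous sets $F$ that work for \emph{every} partition of $\{0,\dots,m-1\}$; one then flips to $S=0^n1^\omega$, lets $d=\Phi^{S\oplus\emptyset}$ be the new coloring, thins to a computable $Y$ on which $\lim_{y\in Y}d(x,y)$ exists for $x<m$, and chooses $F$ inside the part $Q_i$ determined by those limits --- so $F$ is extendible inside $Y$ by construction, unless $Y$ has no infinite $d$-homogeneous set of color $i$, in which case Jockusch's Theorem 5.11 hands over a computable homogeneous set of color $1-i$. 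It is this ``choose $F$ after seeing the post-flip coloring's limit behavior, uniformly over all partitions'' step, together with the two computability dichotomies, that your sketch is missing, and without it the argument does not close.
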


\begin{proof}
  Assume otherwise, and fix functionals $\Phi$ and $\Psi$ witnessing the reduction. We build an instance $S$ of $\LPO$ such that the pair $\seq{S,\emptyset}$ contradicts this assumption. We have that $\Phi^{S \oplus \emptyset}$ is a coloring $[\omega]^2 \to 2$, and for every infinite homogeneous set $H$ for this coloring, $\Psi^{S \oplus \emptyset \oplus H} = \seq{\{b\},Y}$, where $b$ is $0$ or $1$ depending on whether $S = 0^\omega$ or $S = 0^n1^\omega$ for some $n$, and $Y \nleq\sub{T} \emptyset$. (We think of $\Psi^{S \oplus \emptyset \oplus H}$ as the characteristic function of $\{b\} \oplus Y$, so that
$\Psi^{S \oplus \emptyset \oplus H}(0)\converges = 1$ if and only if $b = 0$.) We show that the coloring $\Phi^{S \oplus \emptyset}$ necessarily has an infinite homogeneous set $H$ satisfying one of the following properties:
	\begin{enumerate}
		\item\label{RT22NONCase1} $H$ is computable;
		\item\label{RT22NONCase2} $\Psi^{S \oplus \emptyset \oplus H}(0) \simeq 0$ and $S = 0^\omega$;
		\item\label{RT22NONCase3} $\Psi^{S \oplus \emptyset \oplus H}(0) \converges = 1$ and $S = 0^n1^\omega$ for some $n$.
	\end{enumerate}
	In the first case, $S \oplus \emptyset \oplus H$ obviously cannot compute a solution to our $\NON$-instance. And in the remaining cases, we have a contradiction to $\Psi^{S \oplus \emptyset \oplus H}$ giving us a solution to our $\LPO$-instance.
	
	Let $c$ be the coloring $\Phi^{0^\omega \oplus \emptyset} : [\omega]^2 \to 2$. Define $\mathcal{C}$ to be the $\Pi^0_1$ class consisting of all $2$-partitions $\seq{P_0,P_1}$ of $\omega$ such that
	\[
		(\forall i < 2)(\forall \text{ finite } F \subseteq P_i)[(\forall x,y \in F)[c(x,y) = i] \to \Psi^{0^\omega \oplus \emptyset \oplus F}(0) \simeq 0].
	\]
	We consider two cases. First, suppose $\mathcal{C}$ is nonempty. By Lemma \ref{lem:RT22NON} with $k = 2$ and $A = \omega$, if $c$ is unbalanced on $P_0$ and $P_1$ for every $\seq{P_0, P_1} \in \mathcal{C}$, then $c$ has a computable infinite homogeneous set. We can then take this to be $H$, set $S = 0^\omega$, and satisfy Property \eqref{RT22NONCase1} above. So assume not. Fix $\seq{P_0, P_1} \in \mathcal{C}$ and $i < 2$ such that $c$ is balanced on $P_i$, so that in particular, $P_i$ is infinite. Let $H \subseteq P_i$ be any infinite homogeneous set for $c$ with color $i$. If we then take $S = 0^\omega$, it follows by the definition of $\mathcal{C}$ that $\Psi^{S \oplus \emptyset \oplus H}(0) \simeq 0$, so we satisfy Property \eqref{RT22NONCase2}.
	
	So now, suppose $\mathcal{C} = \emptyset$. By compactness, choose $m$ so that for every partition $\seq{P_0, P_1}$ of $\omega$, there are an $i < 2$ and a finite $F \subseteq P_i \res m$ such that $c(x,y) = i$ for all $x,y \in F$ and $\Psi^{0^\omega \oplus \emptyset \oplus F}(0) \converges = 1$. Note that there are only finitely many such $F$ across all possible partitions, so there is a global bound $u$ on the uses of all these computations. Without loss of generality, $u \geq m$. Choose $n > u$ large enough so that $\Phi^{0^n1^\omega \oplus \emptyset}$ agrees with $c = \Phi^{0^\omega \oplus \emptyset}$ below $u$. Let $S = 0^n 1^\omega$, and let $d = \Phi^{S \oplus \emptyset}$. By repeatedly taking subsets, we see that there is a computable infinite set $Y$ such that $\min Y > m$ and for each $x < m$, $\lim_{y \in Y} d(x,y)$ exists. For each $i < 2$, let $Q_i = \{ x < m : \lim_{y \in Y} d(x,y) = i \}$, so that for some partition $\seq{P_0,P_1}$ of $\omega$, we have $Q_0 = P_0 \res m$ and $Q_1 = P_1 \res m$. Choose $i < 2$ and $F \subseteq Q_i$ as above. If $Y$ contains no infinite homogeneous set for $d$ with color $i$, then by~\cite[Theorem 5.11]{Jockusch-1972}, $Y$ contains a computable infinite homogeneous set with color $1-i$, and we satisfy Property \eqref{RT22NONCase1} again. Otherwise, we can take an infinite homogeneous set $H$ for $d$ having $F$ as an initial segment, and by construction, this set satisfies $\Psi^{S \oplus \emptyset \oplus H}(0) \converges = 1$ even though $S \neq 0^\omega$. Thus, we satisfy Property \eqref{RT22NONCase3}.
\end{proof}

Trivially, $\LPO \ured \RT^1_2$, since every instance of $\LPO$ can be regarded as an instance of $\RT^1_2$, so we have the following.

\begin{corollary}
	$\RT^1_2 \times \NON \nured \RT^2_2$.	
\end{corollary}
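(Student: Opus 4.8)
The plan is to deduce the corollary directly from Theorem~\ref{T:NONLPORT2} using monotonicity of the parallel product, so this is really a one-line argument. First I would recall, as already observed in the text immediately preceding the corollary, that $\LPO \ured \RT^1_2$: an $\LPO$-instance $S$ (a binary sequence of the form $0^\omega$ or $0^n1^\omega$) is literally a $2$-coloring of singletons, so we may take $\Phi = \id$; and any infinite homogeneous set $H$ for this coloring has color $0$ exactly when $S = 0^\omega$, so from $S \oplus H$ one reads off the $\LPO$-answer by evaluating the coloring at any element of $H$, which defines the back-functional $\Psi$.

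Next I would invoke the fact, noted just after the definition of $\times$, that the parallel product lifts to the Weihrauch degrees and is monotone; concretely, given reductions witnessing $\LPO \ured \RT^1_2$ and $\NON \ured \NON$, running them componentwise on the two coordinates of a pair witnesses $\LPO \times \NON \ured \RT^1_2 \times \NON$. Now if we had $\RT^1_2 \times \NON \ured \RT^2_2$, then transitivity of $\ured$ would give $\LPO \times \NON \ured \RT^2_2$, contradicting Theorem~\ref{T:NONLPORT2}; hence $\RT^1_2 \times \NON \nured \RT^2_2$. There is no genuine obstacle here: the only point to check is the monotonicity of $\times$ under $\ured$, which is entirely routine.
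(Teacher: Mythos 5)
Your argument is correct and is exactly the paper's: the corollary follows from Theorem~\ref{T:NONLPORT2} via the trivial reduction $\LPO \ured \RT^1_2$ and monotonicity of $\times$ under $\ured$. You simply spell out the witnessing functionals in more detail than the paper does.
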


Clearly $\RT^1_2 \ured \SRT^2_2$, and there is a uniform construction of an $X$-computable instance of $\COH$ with no $X$-computable solution, so we also have the following.

\begin{corollary}\label{cor:Vasco}
	$\SRT^2_2 \times \COH \nured \RT^2_2$.
\end{corollary}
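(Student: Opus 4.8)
The plan is to factor the desired non-reduction through the previous corollary: I will show $\RT^1_2 \times \NON \ured \SRT^2_2 \times \COH$, and then invoke transitivity. Indeed, if $\SRT^2_2 \times \COH \ured \RT^2_2$ held, we would obtain $\RT^1_2 \times \NON \ured \RT^2_2$, contradicting the corollary just proved.

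First I would record the two coordinatewise reductions. For $\RT^1_2 \ured \SRT^2_2$ (in fact $\sured$): given a $2$-coloring $f$ of $\omega$, the coloring $c(x,y) = f(x)$ is stable with $\lim_y c(x,y) = f(x)$, and any set homogeneous for $c$ with color $i$ is homogeneous for $f$ with color $i$; both directions are uniform. For $\NON \ured \COH$ I would use the well-known fact, uniform in the oracle, that $\COH$ has hard instances: given a set $X$ (an arbitrary $\NON$-instance), build uniformly and $X$-computably a sequence $\langle R_e : e \in \omega\rangle$ of subsets of $\omega$ admitting no $X$-computable infinite cohesive set — e.g. devote $R_e$ to the potential cohesive set $C_e = \{n : \Phi_e^X(n)\converges = 1\}$, and, as the approximation to $C_e$ reveals its elements in order of discovery, put alternate discovered elements into $R_e$ and keep the others (and everything outside $C_e$) out; this yields an $X$-computable membership test for $R_e$, uniformly in $e$, and if $\Phi_e^X$ is total with $C_e$ infinite then both $C_e \cap R_e$ and $C_e \setminus R_e$ are infinite, so $C_e$ is not almost homogeneous for $R_e$. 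Any $\COH$-solution $G$ to $\langle R_e\rangle$ is then not $X$-computable, so passing $G$ along unchanged is a valid $\NON$-solution to $X$.

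Finally I would combine the two reductions using the (routine) monotonicity of the parallel product under $\ured$: on a $\SRT^2_2 \times \COH$-... rather, on an instance $\seq{f,X}$ of $\RT^1_2 \times \NON$, apply the two forward functionals to the two coordinates to produce an instance of $\SRT^2_2 \times \COH$, and apply the two backward functionals to the two coordinates of a solution; having the extra oracle $f \oplus X$ available does no harm. This gives $\RT^1_2 \times \NON \ured \SRT^2_2 \times \COH$, and transitivity of $\ured$ together with the previous corollary finishes the proof.

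I do not expect a serious obstacle here, since the real work is already contained in Theorem~\ref{T:NONLPORT2} and its corollary. The only points requiring care are keeping the $\NON \ured \COH$ construction uniform in the instance $X$, and noting that $\NON$ asks only for a set not Turing-below $X$ — so a $\COH$-solution needs no post-processing at all — and that the parallel product is monotone under Weihrauch reducibility.
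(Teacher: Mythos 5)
Your overall strategy is exactly the paper's: the paper's proof of this corollary is precisely ``$\RT^1_2 \ured \SRT^2_2$, and there is a uniform construction of an $X$-computable instance of $\COH$ with no $X$-computable solution,'' combined with monotonicity of $\times$ and the previous corollary. Your reduction $\RT^1_2 \ured \SRT^2_2$ and the final transitivity step are fine.

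There is, however, a genuine gap in the one place where you supply details the paper leaves as a citation, namely your construction witnessing $\NON \ured \COH$. The set $R_e$ you describe --- every other element of $C_e = \{n : \Phi_e^X(n)\converges = 1\}$ in order of discovery, with everything else excluded --- is only $X$-c.e., not $X$-computable: to decide whether $n \in R_e$ one must decide whether $n$ is ever discovered, i.e., whether $\Phi_e^X(n)\converges = 1$, which is $\Sigma^{0,X}_1$-complete information for suitable $e$. So your claim of ``an $X$-computable membership test for $R_e$, uniformly in $e$'' is false, and without it the forward functional cannot output $\seq{R_0, R_1, \ldots}$ as a $\COH$-instance (an instance of $\COH$ is a sequence of total $\{0,1\}$-valued functions computed from the input). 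The obvious repair --- deciding $R_e(n)$ by running the approximation for only $n$ steps and defaulting otherwise --- introduces a lag problem: if the computations $\Phi_e^X(m)$ converge slowly, every element of $C_e$ may receive the default color, making $C_e$ almost homogeneous for $R_e$ after all. The fact you need is true and standard, but the usual proofs are more delicate; for instance, one arranges $R_e$ as a union of intervals with $X$-computably placed markers whose spacing is tied to the convergence of $\Phi_e^X$, so that any infinite set almost homogeneous for $R_e$ is hyperimmune relative to $X$ and hence not $\leq\sub{T} X$ (with care taken that the marker sequence is total and $X$-computable even when $\Phi_e^X$ is partial). Since the paper itself invokes this as a known fact, your proof would be acceptable if you cited it rather than proved it; as written, the proof you give of it does not work.
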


\begin{corollary}
	No additional relations hold between the problems in Theorem \ref{thm:RT22reds}.
\end{corollary}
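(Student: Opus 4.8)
The plan is essentially bookkeeping on top of two non-reductions. For brevity write $A = \SRT^2_2 \sqcup \COH$, $B = \RT^2_2$, $C = \SRT^2_2 \times \COH$, and $D = \SRT^2_2 \comp \COH$. Theorem~\ref{thm:RT22reds} gives $A \ured B \ured D$ and $A \ured C \ured D$, so the full list of reductions it asserts, after closing under transitivity, is $A \ured B$, $A \ured C$, $A \ured D$, $B \ured D$, $C \ured D$ (plus the trivial $X \ured X$). The corollary is therefore the statement that none of the remaining seven candidate reductions
\[
  B \ured A, \quad B \ured C, \quad C \ured A, \quad C \ured B, \quad D \ured A, \quad D \ured B, \quad D \ured C
\]
holds, and I claim all seven follow once we know that $B$ and $C$ are Weihrauch-incomparable, that is, that $C \nured B$ and $B \nured C$.

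The first of these, $\SRT^2_2 \times \COH \nured \RT^2_2$, is exactly Corollary~\ref{cor:Vasco} (coming from Theorem~\ref{T:NONLPORT2}). The second, $\RT^2_2 \nured \SRT^2_2 \times \COH$, is the other half of the incomparability announced in the introduction and is the real content of the corollary. It cannot be obtained by a soft complexity count: by the low$_2$-basis argument of Cholak, Jockusch, and Slaman~\cite{CJS-2001}, every computable instance of $\RT^2_2$ has a low$_2$ solution, and, applying the same result to each factor in turn, so does every computable instance of $\SRT^2_2 \times \COH$, so the two problems cannot be separated by the complexity of solutions to their computable instances. Establishing $\RT^2_2 \nured \SRT^2_2 \times \COH$ therefore requires a dedicated construction of a computable $\RT^2_2$-instance that defeats an arbitrary fixed pair of reduction functionals attempting to solve it from a solution to a stable coloring together with a solution to a cohesive-sequence instance; this is the step I expect to be the main obstacle.

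Granting both non-reductions, the remaining implications are immediate. From $C \nured B$: if $C \ured A$ then $C \ured A \ured B$ gives $C \ured B$, so $C \nured A$; if $D \ured B$ then $C \ured D \ured B$ gives $C \ured B$, so $D \nured B$; and if $D \ured A$ then, since $A \ured B$, we get $D \ured B$, already excluded, so $D \nured A$. From $B \nured C$: if $B \ured A$ then $B \ured A \ured C$ gives $B \ured C$, so $B \nured A$; if $D \ured C$ then $B \ured D \ured C$ gives $B \ured C$, so $D \nured C$. Together with $C \nured B$ and $B \nured C$ themselves this disposes of all seven candidate reductions, so $A$, $B$, $C$, $D$ are pairwise $\uequiv$-inequivalent and every reduction in Theorem~\ref{thm:RT22reds} is strict, which is the assertion of the corollary.
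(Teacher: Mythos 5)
Your reduction of the corollary to the two non-reductions $\SRT^2_2 \times \COH \nured \RT^2_2$ and $\RT^2_2 \nured \SRT^2_2 \times \COH$, followed by transitivity bookkeeping, is exactly right and matches the paper's structure. The problem is that you do not actually prove the second non-reduction: you flag $\RT^2_2 \nured \SRT^2_2 \times \COH$ as ``the main obstacle'' requiring ``a dedicated construction,'' and so the proof is incomplete at precisely the one point that carries content beyond Corollary \ref{cor:Vasco}.

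Worse, your reason for believing a soft argument cannot work is mistaken: you compare the wrong jump classes. It is true that both problems admit low$_2$ solutions to computable instances, but the separating invariant is $\Delta^0_2$, not low$_2$. Every computable instance of $\SRT^2_2$ has a $\Delta^0_2$ solution, and every computable instance of $\COH$ has a $\Delta^0_2$ solution (by Jockusch--Stephan, the degrees of $r$-cohesive sets are exactly those $\mathbf{d}$ with $\mathbf{d}' \gg \mathbf{0}'$, and such degrees exist below $\mathbf{0}'$, e.g.\ the high $\Delta^0_2$ degrees); hence every computable instance of $\SRT^2_2 \times \COH$ has a $\Delta^0_2$ solution, and a Weihrauch (indeed computable) reduction would transfer this to $\RT^2_2$. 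But Jockusch constructed a computable $2$-coloring of pairs with no $\Delta^0_2$ infinite homogeneous set --- a fact already quoted in Section \ref{s:intro} of this paper to show $\RT^2_k \nured \SRT^2_k$. So $\RT^2_2 \nured \SRT^2_2 \times \COH$ does follow from a complexity count on solutions to computable instances, and this is exactly how the paper proves it; no new diagonalization against reduction functionals is needed. You should replace your ``cannot be obtained by a soft complexity count'' paragraph with this $\Delta^0_2$ argument, after which the rest of your write-up goes through.
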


\begin{proof}
	Since every computable instance of each of $\COH$ and $\SRT^2_2$ admits a $\Delta^0_2$ solution, so does every instance of $\COH \times \SRT^2_2$. By contrast, it is known that $\RT^2_2$ has a computable instance with no $\Delta^0_2$ solution. Thus, $\RT^2_2 \nured \COH \times \SRT^2_2$. The remaining non-reductions follow from this fact and Corollary \ref{cor:Vasco} by transitivity.
\end{proof}

Let $\limprob$ be the problem where an instance is a convergent sequence of elements of $\mathbb N^{\mathbb N}$, and the unique solution to this problem is the limit of this sequence. The following fact about $\limprob$ is well-known, but it is worth mentioning that it follows directly from Theorem \ref{T:NONLPORT2}, since $\LPO \times \NON \leq\sub{W} \limprob$.

\begin{corollary}
$\limprob \nured \RT^2_2$.
\end{corollary}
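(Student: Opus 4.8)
The final statement to prove is $\limprob \nured \RT^2_2$, and the excerpt itself already tells us the intended route: this follows from Theorem~\ref{T:NONLPORT2} (which gives $\LPO \times \NON \nured \RT^2_2$) together with the well-known fact that $\LPO \times \NON \ured \limprob$. So the plan is to establish $\LPO \times \NON \ured \limprob$ and then invoke transitivity of $\ured$.

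For the reduction $\LPO \times \NON \ured \limprob$, I would handle the two factors separately and combine. For the $\LPO$ factor: given an instance $S$ which is either $0^\omega$ or $0^n1^\omega$, produce the sequence of elements of $\mathbb{N}^{\mathbb{N}}$ whose $s$-th term is the constant function $0$ as long as no $1$ has appeared in $S$ by stage $s$, and the constant function $1$ once a $1$ has appeared; this sequence converges (to $0^{\mathbb{N}}$ or $1^{\mathbb{N}}$), and its limit uniformly decides which case we are in, hence computes the $\LPO$-solution. For the $\NON$ factor: given a set $X$, I want to produce a convergent sequence in $\mathbb{N}^{\mathbb{N}}$ whose limit is not computable from $X$ — for instance, arrange for the limit to encode (a set Turing-equivalent to, or at least not below) $X'$ or some fixed non-$X$-computable set built diagonally; the standard trick is that $\limprob$ has instances computing arbitrarily complex (in particular, $\leq_T X'$ but $\nleq_T X$) limits via a $\Delta^{0,X}_2$ approximation. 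Then take the parallel product of the two sequences (pairing termwise), which is again a convergent sequence in $\mathbb{N}^{\mathbb{N}}$, and recover both solutions from its limit.

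The main obstacle — really the only nontrivial point — is the $\NON$ half: one must exhibit, uniformly in $X$, a convergent sequence in $\mathbb{N}^{\mathbb{N}}$ whose limit is not $X$-computable. The cleanest way is to note that any set $Y$ that is $\Delta^{0,X}_2$ but not $X$-computable (e.g.\ a suitable diagonally non-$X$-computable set, or $X'$ itself) has a uniformly-$X$-computable approximation $\langle Y_s \rangle$ with $\lim_s Y_s = Y$; feeding $\langle Y_s \rangle$ to $\limprob$ returns $Y \nleq_T X$. The uniformity here is routine — the approximation to $X'$ is uniform in $X$ — so there is no real difficulty, just bookkeeping.

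Once $\LPO \times \NON \ured \limprob$ is in hand, the conclusion is immediate: if we had $\limprob \ured \RT^2_2$, then by transitivity $\LPO \times \NON \ured \RT^2_2$, contradicting Theorem~\ref{T:NONLPORT2}. I would present the proof in one short paragraph: state the reduction $\LPO \times \NON \ured \limprob$ with the brief description above, then close by transitivity against Theorem~\ref{T:NONLPORT2}.
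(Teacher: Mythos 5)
Your proposal is correct and matches the paper's argument exactly: the paper also derives this corollary from Theorem \ref{T:NONLPORT2} via the observation that $\LPO \times \NON \ured \limprob$ (which it states without proof, as the fact is routine). Your fleshing-out of the two halves of that reduction — the convergent approximation for $\LPO$ and a $\Delta^{0,X}_2$ non-$X$-computable limit for $\NON$ — is sound.
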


It is an interesting open question whether $\LPO$ can be replaced by an even weaker combinatorial principle in Theorem \ref{T:NONLPORT2}. A first candidate would be $\LLPO \uequiv \C_2$ (see~\cite[\S 3]{kreuzer}). Already $\LLPO \times \NON \nured \RT^2_2$ would imply $\WKL \nured \RT^2_2$, which is known by Liu's celebrated result~\cite{liu}. An even further improvement to $\ACC_n \times \NON \nured \RT^2_2$ would have as a consequence that $\mathsf{DNC}_n \nured \RT^2_2$ (see~\cite[\S 3, \S 5]{kreuzer}, which also has definitions of $\ACC_X$ and $\mathsf{DNC}_X$). On the other hand, it is the case that $\ACC_\mathbb{N} \times \NON \ured \mathsf{DNC}_\mathbb{N} \ured \mathsf{RT}^2_2$ (the latter reduction having been shown in~\cite[Theorem 2.3]{hirschfeldt3}).\footnote{The remarks in this paragraph were kindly provided by an anonymous referee.}

Note that Theorem \ref{T:NONLPORT2} also cannot be improved to show that $\LPO \times \NON \nured \RT^2_k$ for arbitrary $k \geq 2$. Indeed, each of $\LPO$ and $\NON$ is Weihrauch reducible to $\RT^2_2$, and so $\LPO \times \NON \ured \RT^2_2 \times \RT^2_2 \ured \RT^2_4$.

We can improve on this reduction with the following
strong counterpoint to Theorem \ref{T:NONLPORT2}, which shows that the theorem fails as soon as the number of colors is allowed to increase from two, even via a stable coloring. It is easy to see that $\LPO \ured \SRT^1_2$ and $\NON \ured \SRT^2_2$, so the following result also follows from~\cite[Theorem 3.24]{BR-2017}, which has as a special case that $\SRT^1_2 \times \SRT^2_2 \ured \SRT^2_3$.

\begin{proposition}\label{prop:LPONONSRT23}
	$\LPO \times \NON \ured \SRT^2_3$.
\end{proposition}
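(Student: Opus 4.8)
The plan is to fuse a given instance $\seq{S,X}$ of $\LPO\times\NON$ into a single stable $3$-colouring of pairs. For the $\NON$-part I would reuse the standard (uniform) proof that $\NON\ured\SRT^2_2$: fix, uniformly in $X$, an $X$-computable stable colouring $c:[\omega]^2\to 2$ all of whose infinite homogeneous sets $H$ satisfy $H\nleq\sub{T}X$; concretely, take $c$ with limit function a $\Delta^{0,X}_2$ bi-$X$-immune set, so that any infinite homogeneous set lies in one of the two colour classes, both of which are $X$-immune. Then I set $d=\Phi^{S\oplus X}$ to be the colouring with $d(x,y)=c(x,y)$ when $S(x)=S(y)$ and $d(x,y)=2$ when $S(x)\neq S(y)$. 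Since $S$ is $0^\omega$ or $0^n1^\omega$, hence eventually constant, for each fixed $x$ the value $d(x,y)$ is eventually either $c(x,y)$ or constantly $2$; so $d$ is stable, and it is plainly computable from $S\oplus X$.

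The crux is to establish the following claim about an arbitrary infinite $d$-homogeneous set $H$: \emph{$H$ is homogeneous for $c$ (so its colour is $0$ or $1$), and if $S=0^n1^\omega$ then every element of $H$ is $\geq n$.} When $S=0^\omega$ we have $d=c$ outright. When $S=0^n1^\omega$, note that $H$ meets $\{j:j\geq n\}$ in an infinite set (as $\{j:j<n\}$ is finite); so if $H$ contained some $x<n$, pairing $x$ with a larger element of $H$ that is $\geq n$ yields a pair of colour $2$, forcing the colour of $H$ to be $2$, whereas two elements of $H$ that are both $\geq n$ receive their $c$-colour, which lies in $\{0,1\}$ --- a contradiction. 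Hence $H\subseteq\{j:j\geq n\}$, on which $d$ agrees with $c$, so $H$ is $c$-homogeneous of colour $0$ or $1$.

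Finally I define the second reduction functional $\Psi$: on input $S\oplus X\oplus H$ it outputs the pair $\seq{\{b\},H}$, where $b=1$ if $S(j)=1$ for some $j\leq\min H$ and $b=0$ otherwise. Correctness follows from the claim: $H$ is $c$-homogeneous, hence $H\nleq\sub{T}X$, so $H$ is a valid $\NON$-solution for $X$; and $b$ is the correct $\LPO$-answer, since if $S=0^\omega$ then no $S(j)$ equals $1$ and $b=0$, while if $S=0^n1^\omega$ then $\min H\geq n$ by the claim, so $S(n)=1$ with $n\leq\min H$ and $b=1$. Thus $\Phi,\Psi$ witness $\LPO\times\NON\ured\SRT^2_3$.

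The step that requires care is the claim, and in particular the realisation that one \emph{cannot} read the $\LPO$-bit off the \emph{colour} of $H$: a greedy/cone-avoidance argument shows that any fixed $X$-computable stable colouring with no infinite $X$-computable homogeneous set must have both colour classes infinite, hence (by $\SRT^2_2$ applied within each class) must admit infinite homogeneous sets of both colours, so no colour-based test can separate ``$S=0^\omega$'' from ``$S\neq 0^\omega$''. The device above circumvents this by encoding the bit in the \emph{location} of $H$ instead: an $S$-value-$0$ point is colour-incompatible with $S$-value-$1$ points, which forces every infinite homogeneous set past the unique $1$ in $S$ whenever it exists.
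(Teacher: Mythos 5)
Your proof is correct and follows essentially the same route as the paper's: the same $3$-colouring $d(x,y)=c(x,y)$ if $S(x)=S(y)$ and $d(x,y)=2$ otherwise, the same observation that every infinite $d$-homogeneous set avoids colour $2$ and is $c$-homogeneous, and the same decoding of the $\LPO$-bit by checking whether $S(j)=1$ for some $j\leq\min H$. Your additional remark that the bit must be encoded in the location of $H$ rather than its colour is a sound observation but not needed for the argument.
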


\begin{proof}
	Let $S$ be an arbitrary instance of $\LPO$, and let $X$ be any set. Let $c : [\omega]^2 \to 2$ be the result of applying a standard uniform construction of an $X$-computable stable coloring $c : [\omega]^2 \to 2$ with no $X$-computable infinite homogeneous set (e.g., as in~\cite[Theorem 2.1]{Jockusch-1972}). Define $d : [\omega]^2 \to 3$ by
	\[
	d(x,y) =
	\begin{cases}
		c(x,y) & \text{if } S(x) = S(y),\\
		2 & \text{otherwise.}
	\end{cases}
	\]
	Clearly, $d$ is uniformly computable from $S \oplus X$. If $S = 0^\omega$ then $d = c$, while if $S = 0^n1^\omega$ for some $n$ then $\lim_y d(x,y) = 2$ for all $x < n$, and $d(x,y) = c(x,y)$ for all $x \geq n$. Hence, every infinite homogeneous set for $d$ has color $0$ or $1$, and is also homogeneous for $c$. In particular, no infinite homogeneous set for $d$ is $X$-computable. Moreover, for any such infinite homogeneous set $H$, we have that $S = 0^n1^\omega$ if and only if $(\exists x \leq \min H)[S(x) = 1]$. Hence, $\seq{\{b\},H}$, where $b$ is $0$ or $1$ depending as $S = 0^\omega$ or $S = 0^n1^\omega$ for some $n$, is a uniformly $(S \oplus X \oplus H)$-computable solution to the $\LPO \times \NON$-instance $\seq{S,X}$.
\end{proof}

We do not know whether $\SRT^2_3$ above can be replaced by $\D^2_3$. However, we have the following related result, which does work for $\D^2_3$. The proof uses a novel coding mechanism.

\begin{theorem}
	For every $k \geq 1$, $\fe{(\RT^1_k)} \times \NON \ured \D^2_{k+1}$.	
\end{theorem}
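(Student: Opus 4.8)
The plan is to combine the two ingredients of $\fe{(\RT^1_k)} \times \NON$ into a single $\Delta^0_2$ $(k+1)$-partition in such a way that an infinite subset of any one of its pieces simultaneously decodes both a cofinite-error solution to the $\RT^1_k$-instance and a set computing a noncomputable (relative to $X$) set. Concretely, let $\seq{f, X}$ be an $\fe{(\RT^1_k)} \times \NON$-instance, where $f : \omega \to k$ and $X$ is a set. First I would fix a standard uniform construction of an $X$-computable stable coloring $c : [\omega]^2 \to 2$ with no $X$-computable infinite homogeneous set (as in~\cite[Theorem 2.1]{Jockusch-1972}); equivalently, a $\Delta^{0,X}_2$ set $B$ such that neither $B$ nor $\omega \setminus B$ has an infinite $X$-computable subset. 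The idea is to build a $\Delta^{0,(f \oplus X)}_2$ $(k+1)$-partition $\seq{A_0, \ldots, A_k}$ of $\omega$ whose pieces $A_0, \ldots, A_{k-1}$ are indexed by the colors of $f$ and whose last piece $A_k$ carries the $\NON$-content. The natural first attempt — put $n \in A_{f(n)}$ — fails because $f$ is only given as a $\RT^1_k$-instance (a total function), so this is already computable from $f$ and gives no cofinite-error flexibility; moreover it does nothing with $X$.

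The coding mechanism I would use is the following. Think of $\omega$ as a disjoint union of consecutive intervals $I_0, I_1, \ldots$ of rapidly increasing length, with $I_n$ "belonging" to the number $n$. Within interval $I_n$, declare that a point $m \in I_n$ lies in $A_{f(n)}$ if $m$ is among the first few elements of $I_n$ and lies in $A_{j}$ for $j = f(n)$ chosen according to the $\Delta^0_2$ approximation of $B(n)$ otherwise — the point being to diagonalize the $\Delta^0_2$ approximations against Turing functionals so that an infinite subset of $A_k$ computes something not $X$-computable, while an infinite subset of any $A_j$ for $j < k$ hits $I_n$ for cofinitely many $n$ with $f(n) = j$ except on a finite set. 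Let me restate this more carefully in the write-up: the partition should be arranged so that (i) each $A_j$ with $j < k$ satisfies: any infinite $L \subseteq A_j$ has the property that $\{n : L \cap I_n \neq \emptyset\}$ is infinite and $f(n) = j$ for all but finitely many such $n$, so $L$ computes a set $=^* \{n : f(n) = j\}$, whence (reading off one such $j$ that actually occurs infinitely often, which an $\RT^1_k$-solver does not need to do since *any* solution works) $L$ together with $f$ computes an element of $\fe{(\RT^1_k)}(f)$; and (ii) $A_k$ is set up to code $B$ redundantly across the intervals, so that any infinite $L \subseteq A_k$ computes an infinite subset of $B$ (or of $\omega \setminus B$), hence computes, together with $X$, a set $\nleq_T X$.

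The key steps, in order: (1) set up the interval structure and the $\Delta^0_2$-in-$(f\oplus X)$ approximation that defines $\seq{A_0,\ldots,A_k}$, verifying it is a genuine $(k+1)$-partition of $\omega$; (2) check that the resulting coloring (equivalently $\Delta^0_2$ partition) is uniformly obtained from $\seq{f,X}$, so $\Phi$ is defined; (3) show that for any infinite $L$ contained in some $A_j$, $j < k$: decode a $\fe{(\RT^1_k)}$-solution to $f$ uniformly from $L \oplus f$ — here the finite error is exactly the finitely many "bad" intervals $I_n$ that $L$ might meet with $f(n) \neq j$, which is what forces the use of $\fe{(\cdot)}$ rather than $\RT^1_k$ itself and is presumably "the novel coding mechanism"; (4) show that for any infinite $L \subseteq A_k$, $L \oplus X$ computes a set $\nleq_T X$, using that $A_k$ faithfully records $B$'s membership on the intervals it meets; (5) assemble $\Psi$: on input $\seq{f,X} \oplus L$ with $L$ an infinite subset of some $A_i$, first determine (computably from $L$ and the partition, which is $\Delta^0_2$ but we are allowed oracle access to the solution $L$, not the partition — so one must be slightly careful and instead build $\Psi$ to work uniformly from $L$ alone or $L \oplus f \oplus X$, reading off which $i$ from the structure of $L$) whether $i < k$ or $i = k$, and output the pair $\seq{\text{decoded }\RT^1_k\text{-solution}, \text{decoded }\NON\text{-solution}}$.

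The main obstacle I expect is arranging the interval coding so that \emph{both} decodings succeed from the \emph{same} infinite subset $L$ regardless of which piece $A_i$ it sits in, and so that when $i < k$ the "$\NON$-coordinate" of the output is still correct and when $i = k$ the "$\RT^1_k$-coordinate" is still correct — i.e., each single solution $L$ must yield an honest solution to \emph{both} factors of the product simultaneously. The cleanest way to handle this is redundancy: make $A_k$ code $B$ densely enough that it can be recovered from $A_k$-membership on any infinite subset, and likewise arrange that the sets $A_0,\ldots,A_{k-1}$ each (up to finite error) reveal $\{n : f(n) = j\}$ for the relevant $j$; then for $i < k$ one recovers the $\RT^1_k$-solution from $A_i$ directly and recovers the $\NON$-solution by noting that $A_i$'s complement among the intervals still sees enough of $B$, or — more robustly — by having every $A_i$ (including $i<k$) carry a copy of the $B$-coding in a reserved sub-block of each interval it owns. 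Making these two codings coexist inside each interval without conflicting, while keeping everything a $\Delta^0_2$ partition uniformly in $\seq{f,X}$, is the delicate part; the rest is bookkeeping.
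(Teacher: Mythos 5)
There is a genuine gap, and it sits exactly where the real work of the theorem lies: you never explain how an infinite subset of the extra piece $A_k$ yields a $\fe{(\RT^1_k)}$-solution to $f$. Every solution to your $\D^2_{k+1}$-instance might land inside $A_k$, which in your design carries only ``$\NON$-content,'' so the $\RT^1_k$-coordinate of the output is unaccounted for in precisely the hard case. The paper's mechanism for this is the one idea your sketch is missing: the elements of the $(k+1)$-st piece are spaced so far apart that the \emph{gaps} of any infinite subset of it compute the jump $(c\oplus X)'$, and a uniform ``forbidden colors'' procedure then either correctly builds an almost homogeneous set using that jump (when the solution really is in the last piece) or detects a wrong jump-approximation, exits, and falls back on the solution itself (which, when it lies in some $P_j$ with $j<k$, is literally homogeneous with color $j$, because $P_j$ is just $\{x : c(x)=j\}$ minus a sparse reserved set --- no intervals or finite-error tracking needed there). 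Your interval scheme addresses only the easy pieces and leaves the decisive one open.

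Your treatment of the $\NON$ factor is also both harder than necessary and not actually closed. You propose to code a fixed bi-immune-relative-to-$X$ set $B$ redundantly into the partition and to recover a set $\nleq\sub{T} X$ from an arbitrary infinite subset of an arbitrary piece; but an infinite subset of a set that ``codes $B$'' need not compute $B$ or anything non-$X$-computable (it can simply be a computable subset of the positions used for coding), and your proposed fix via reserved sub-blocks does not prevent a solution from avoiding those sub-blocks. The paper sidesteps all of this: it diagonalizes directly, placing for each $e$ a pair $x_{e,0},x_{e,1}$ of potential witnesses to $\Phi^X_e$ being infinite into \emph{different} pieces of the partition, so that no solution whatsoever is $X$-computable and the solution itself serves as the $\NON$-answer. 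That simultaneously resolves the ``one $L$ must solve both factors'' tension you correctly identify at the end but do not resolve.
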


\begin{proof}
	Let $c : \omega \to k$ be a coloring and $X$ a set. We describe a uniform procedure to define an $X$-computable stable coloring $d : [\omega]^2 \to k+1$ with no $X$-computable solution (i.e., no $X$-computable infinite limit-homogeneous set), and a uniform procedure for turning any such solution into an almost limit-homogeneous set for $c$. Fix a canonical $(c \oplus X)$-computable enumeration of $(c \oplus X)'$, and let
	\[
		s_0 < s_1 < \cdots
	\]
	be a $(c \oplus X)'$-computable sequence such that for all $e$, we have that $(c \oplus X)'[s_e]$ and $(c \oplus X)'$ agree on all $x \leq e$. Using $(c \oplus X)'$, choose
	\[
		x_{0,0} < x_{0,1} < x_{1,0} < x_{1,1} < x_{2,0} < x_{2,1} < \cdots
	\]
	with $x_{e+1,0} - x_{e,1} \geq s_e$ for all $e$, and such that either $\Phi^X_e(x_{e,0}) \converges = \Phi^X_e(x_{e,1}) \converges = 1$, or $\Phi^X_e(x) \simeq 0$ for all sufficiently large $x$.
	
	Now define a $(c \oplus X)'$-computable $(k+1)$-partition $P_0 \cup \cdots \cup P_k$ of $\omega$ as follows. For each $e$, put $x_{e,0}$ into $P_k$, and put every other $x$ into $P_{c(x)}$. Thus, for all $e$, we have that $x_{e,0}$ and $x_{e,1}$ belong to different parts of the partition, so by construction, if $\Phi^X_e$ defines an infinite set, this set cannot be an infinite subset of any $P_i$. We also have that if $z_0 < \cdots < z_{n-1} \in P_k$ then $z_{e+1} - z_e \geq s_e$ for all $e < n$, so any infinite subset of $P_k$ computes $(c \oplus X)'$. We can regard  $P_0 \cup \cdots \cup P_k$ as a $(c \oplus X)$-computable stable coloring $d$. Clearly, $d$ is defined uniformly from $X$ and $c$, and
no infinite limit-homogeneous set for $d$ is $X$-computable.

	Consider any $\mathsf{D}^2_{k+1}$-solution to $d$, i.e., any infinite set $Z = \{z_0 < z_1 < \cdots\}$ contained in one of $P_0,\ldots,P_k$. We construct a set $Y = \{y_0 < y_1 < \cdots\}$ inductively by stages, defining $y_n$ at stage $n$. At any stage, we may choose to \emph{exit the construction}, which simply means to let $m$ be the maximum of all $y_n$ defined thus far, and let the rest of our set be $\{z_n \in Z : z_n > m\}$. At stage $n = 0$, let $y_0 = 0$, and declare no color $i < k$ \emph{forbidden}. If we have not exited the construction by stage $n+1$, assume we have defined $y_n$ and there is at least one $i < k$ that is still not forbidden. For the least such $i$, we compute a number $e$ such that $e \in (c \oplus X)'$ if and only if $(\exists y > y_n )[c(y) = i]$, which we can do uniformly from $y_n$, the color $i$, and an index for $c$ as a $(c \oplus X)$-computable coloring. If $e \in (c \oplus X)'[z_{e+1} - z_e]$ then certainly $e \in (c \oplus X)'$, so we can find a $y > y_n$ with $c(y) = i$, and we let $y_{n+1}$ be the least such $y$. If $e \notin (c \oplus X)'[z_{e+1} - z_e]$, we declare $i$ forbidden and restart the process with the next smallest non-forbidden color. In this case, we promise that if at any future stage we see a $y > y_n$ with $c(y) = i$, we exit the construction. Note that this can happen only if $z_{e+1} - z_e < s_e$. Note also that it must happen if all $i<k$ become forbidden.
	
	It is easy to see that $Y$ is uniformly computable from $c \oplus X \oplus Z$. We claim that $Y$ is almost limit-homogeneous for $c$. This is clear if we never exit the construction, because in that case there must be some least $i$ that is never declared forbidden, and then $c(y_n) = i$ for almost all $n$. If, on the other hand, we do exit the construction, then as noted above we must have $z_{e+1} - z_e < s_e$ for some $e$, and hence $Z$ cannot be a subset of $P_k$. In this case, $Z$ is therefore a subset of $P_i$ for some $i < k$, and by construction, if $x \in P_i$ for such an $i$ then $c(x) = i$. As $Y =^* Z$, it follows that $Y$ is almost limit-homogeneous for $c$.
\end{proof}

\section{Stable Ramsey's theorem for pairs}


As mentioned above, every instance of $\LPO$ can be regarded as an instance of $\RT^1_2$. The latter instance, however, is consequently unbalanced. It is interesting to ask whether this is the only possible reduction, or whether $\LPO$ can in fact be reduced to $\RT^1_2$ via a balanced coloring. The following proposition shows that the answer is no. It also points to an additional point of disagreement in the uniform strengths of $\SRT^2_2$ and $\D^2_2$, to complement the aforementioned result that $\SRT^2_2 \nured \D^2_k$ for all $k$.

We first give a definition.

\begin{definition}
	For $\mathsf{P} \in \{\RT^1_k,\SRT^2_k,\D^2_k\}$, let $\bal{\mathsf{P}}$ be the restriction of $\mathsf{P}$ to balanced colorings (on $\omega$), and $\unbal{\mathsf{P}}$ the restriction to unbalanced colorings.
\end{definition}

\begin{proposition}\label{Prop:LPO_SRT_D}
	$\LPO \ured \bal{\SRT^2_2}$, but $\LPO \nured \bal{\D^2_k}$ for all $k$.	
\end{proposition}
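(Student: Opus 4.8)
The plan is to prove the two halves separately. For the positive direction, $\LPO \ured \bal{\SRT^2_2}$, I would take an instance $S$ of $\LPO$ and build from it a \emph{balanced} stable coloring $d \colon [\omega]^2 \to 2$ together with a uniform procedure that recovers the answer to $S$ from any infinite homogeneous set for $d$. The idea is to interleave two ``pattern regions'': on one region encode the information ``$S=0^\omega$'' and on the other ``$S=0^n1^\omega$,'' arranged so that \emph{both} colors occur on arbitrarily large infinite homogeneous sets (this is what makes $d$ balanced), while the color of any particular infinite homogeneous set, together with its least element, still decodes $S$. Concretely: split $\omega$ into blocks; as long as $S$ has shown no $1$, make the blocks look like a fixed balanced stable coloring (e.g. color $c(x,y)$ depending only on whether $x$ lies in an even or odd block), so that both a $0$-homogeneous and a $1$-homogeneous infinite set exist. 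Once $S$ reveals a $1$ at stage $n$, from that point on force the coloring into a configuration from which one reads off ``$1$'' — but do so on a tail, so that stability and balancedness are preserved regardless. Given an infinite homogeneous set $H$ for $d$, look at $\min H$ and run $S$ long enough: the construction guarantees that $H$'s behavior, relativized to what $S$ has done below $\min H$, pins down whether $S=0^\omega$. I would then hand back $\{b\}$ where $b$ is the $\LPO$-answer. The bookkeeping to make $d$ genuinely balanced (every $i<2$ is the color of \emph{some} infinite homogeneous set) while keeping stability is the only delicate point here, and it is handled by making the ``$S=0^n1^\omega$'' modification affect only finitely much structure relative to each potential homogeneous set.

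For the negative direction, $\LPO \nured \bal{\D^2_k}$, I would argue by contradiction: suppose $\Phi,\Psi$ witness the reduction. Apply $\Phi$ to the instance $0^\omega$ of $\LPO$ to get a balanced $\Delta^0_2$ $k$-partition $\seq{A_0,\ldots,A_{k-1}}$ (via the $\bal{\D^2_k}$ formulation in terms of $\Delta^0_2$ partitions, which is strongly Weihrauch equivalent). Because the partition is balanced, for \emph{each} $i<k$ there is an infinite homogeneous/limit-homogeneous set with color $i$ — in the partition language, each $A_i$ has an infinite subset (indeed, one can use balancedness to ensure this for every part); call such a solution $L_i$. Feeding $L_i$ to $\Psi$ must yield the correct $\LPO$-answer for $0^\omega$, namely $\{0\}$, so $\Psi^{0^\omega\oplus L_i}(0)\converges = 1$ for every such $L_i$ and every $i$. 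Now, just as in the proof of Theorem~\ref{T:NONLPORT2}, the key move is a compactness/finite-use argument: there is a single initial segment $F$ (a finite homogeneous block for some part) and a use bound $u$ such that $\Psi^{0^\omega\oplus F}(0)\converges=1$ with use below $u$, \emph{and} this finite configuration $F$ can be realized inside the partition produced by $\Phi$ on an instance $S = 0^n1^\omega$ for suitably large $n$ (chosen $>u$ so that $\Phi^{0^n1^\omega\oplus\emptyset}$ agrees with $\Phi^{0^\omega\oplus\emptyset}$ below $u$). Then extend $F$ to a full infinite limit-homogeneous set $H$ for the $S=0^n1^\omega$ partition; since $F$ is an initial segment and the use is below $u<n$, $\Psi^{S\oplus H}(0)\converges=1$, contradicting that the $\LPO$-answer for $0^n1^\omega$ is $\{1\}$, i.e. should give $0$ at coordinate $0$.

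The main obstacle I anticipate is ensuring, in the negative direction, that the finite configuration $F$ witnessing the wrong answer is genuinely \emph{extendible} to an infinite limit-homogeneous set of the required part in the $0^n1^\omega$-partition — this is where balancedness must be used carefully, since a priori $\bal{\D^2_k}$ only restricts \emph{instances} (the partitions the reduction outputs), not the solutions, and one needs that both partitions (for $0^\omega$ and for $0^n1^\omega$) are balanced and share enough structure below the use bound. I expect this to be resolved exactly as in Theorem~\ref{T:NONLPORT2}: choose $n$ past the global use bound $u$, pass to a computable infinite set along which the $\Delta^0_2$ partition for $0^n1^\omega$ has converged below $u$, read off the induced finite partition of $\{0,\ldots,u-1\}$, locate $F$ inside the appropriate part, and invoke balancedness (or Jockusch~\cite[Theorem 5.11]{Jockusch-1972}) to extend $F$ to an infinite limit-homogeneous set — if no such extension with the ``bad'' color exists, the balancedness hypothesis is contradicted outright. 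The positive direction is routine once the balanced template coloring is fixed; the negative direction is the substantive half and mirrors the structure of the proof of Theorem~\ref{T:NONLPORT2} closely enough that I would present it as a compactness argument of the same shape.
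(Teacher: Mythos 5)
Your positive half is essentially the paper's construction: the paper sets $c(x,y)=\mathrm{par}(x)$ as long as $S$ shows no $1$ below $y$ and $1-\mathrm{par}(x)$ afterwards, so that $c$ is stable and balanced, every infinite homogeneous set $H$ consists of elements of a single parity, and $S=0^\omega$ if and only if the color of $H$ equals $\mathrm{par}(\min H)$. Your block variant is the same idea and is fine.

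The negative half, however, has a genuine gap at its central step. You extract a finite configuration $F$ from a solution to $c=\Phi^{0^\omega}$ with $\Psi^{0^\omega\oplus F}(0)\converges=1$ and use bound $u$, pick $n>u$ so that $\Phi^{0^n1^\omega}$ agrees with $\Phi^{0^\omega}$ below $u$, and then assert that $F$ ``can be realized inside'' the partition produced on $0^n1^\omega$. But agreement of the $\Delta^0_2$ approximations below $u$ gives no control over the \emph{limit} partition: the functional $\Phi$ may delay settling $\lim_y\Phi^S(x,y)$ until it sees whether $S$ ever shows a $1$, so the parts of $\Phi^{0^n1^\omega}$ can be entirely different from those of $\Phi^{0^\omega}$ even though the two colorings agree on all pairs below $u$ (this is precisely the phenomenon exploited in Proposition \ref{prop:LPONONSRT23}). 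Your $F$ may then be scattered across several parts of the new partition, in which case no limit-homogeneous set for $\Phi^{0^n1^\omega}$ extends it and no contradiction results. The paper's proof avoids this circularity by introducing the $\Pi^0_1$ class $\mathcal{C}$ of \emph{all} $k$-partitions $\seq{P_0,\ldots,P_{k-1}}$ on which $\Psi^{0^\omega\oplus F}(0)\simeq 0$ for every finite $F\subseteq P_i$: if $\mathcal{C}=\emptyset$, compactness yields a single level $m$, hence a uniform use bound over the finitely many candidates $F\subseteq\{0,\ldots,m-1\}$, valid for \emph{every} partition, in particular for the true partition of $\Phi^{0^n1^\omega}$, which is only determined after $n$ is chosen. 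The price is a second case, $\mathcal{C}\neq\emptyset$, where the contradiction instead comes from an infinite limit-homogeneous subset of an infinite part of a member of $\mathcal{C}$, a genuine solution on which $\Psi$ cannot answer correctly; your one-track argument has no analogue of this case. A secondary error: your parenthetical claim that balancedness gives an infinite limit-homogeneous subset of \emph{every} part is false for $k\geq 3$ (balanced only rules out a single color carrying all infinite homogeneous sets, so all but two parts may be finite); the correct proof uses balancedness only to extend the witness located inside the new partition.
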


\begin{proof}
	For the positive reduction, let $S$ be any instance of $\LPO$. Let $\mathrm{par}(x)$ be $0$ or $1$ depending on whether $x$ is even or odd, and define $c : [\omega]^2 \to 2$ by
	\[
		c(x,y) =
		\begin{cases}
			\mathrm{par}(x) & \text{if } (\forall z < y)[S(z) = 0],\\
			1 - \mathrm{par}(x) & \text{otherwise}.
		\end{cases}
	\]
	Thus, if $S = 0^\omega$ then $\lim_y c(x,y) = \mathrm{par}(x)$ for all $x$, and if $S = 0^n1^\omega$ for some $n$ then $\lim_y c(x,y) = 1 - \mathrm{par}(x)$ for all $x$. In either case, for each $i < 2$, there are infinitely many $x$ with $\lim_y c(x,y) = i$, so $c$ is balanced. Now, every element in an infinite homogeneous set for $c$ has the same parity. So if $H$ is any such homogeneous set, and if $x_0$ and $x_1$ are its least two elements, then $S = 0^\omega$ if and only if $c(x_0,x_1) = \mathrm{par}(x_0)$. Thus, we have the desired uniform reduction.
	
	For the negative reduction, assume towards a contradiction that $\LPO \ured \bal{\D^2_k}$ via some $\Phi$ and $\Psi$. Then $c = \Phi^{0^\omega}$ is a computable balanced stable coloring $[\omega]^2 \to k$. Define $\mathcal{C}$ to be the $\Pi^0_1$ class of all partitions $\seq{P_0,\ldots,P_{k-1}}$ of $\omega$ such that
	\[
		(\forall i < k)(\forall \text{ finite } F \subseteq P_i)[\Psi^{0^\omega \oplus F}(0) \simeq 0].
	\]
	First, we claim that $\mathcal{C} \neq \emptyset$. Otherwise, by compactness, there is an $m$ such that for every partition $\seq{P_0,\ldots,P_{k-1}}$ of $\omega$, there are an $i < k$ and a finite $F \subseteq P_i \res m$ such that $\Psi^{0^\omega \oplus F}(0) \converges = 1$. Let $u \geq m$ be a bound on the uses of all these computations, for all possible such $F$. Choose $n > u$ large enough so that $\Phi^{0^n1^\omega}$ and $c = \Phi^{0^\omega}$ agree below $u$. Let $S = 0^n1^\omega$ and $d = \Phi^S$, which is another balanced stable coloring. For the partition $\seq{P_0,\ldots,P_{k-1}}$ of $\omega$ given by $P_i = \{x : \lim_y d(x,y) = i\}$, fix $i < k$ and $F \subseteq P_i \res m$ as above. As $d$ is balanced, there is an infinite homogeneous set $H$ for $d$ with color $i$ that has $F$ as an initial segment. But then we have $\Psi^{S \oplus H}(0) \converges = 1$ even though $S \neq 0^\omega$, a contradiction. So $\mathcal{C} \neq \emptyset$. Choose any $\seq{P_0,\ldots,P_{k-1}} \in \mathcal{C}$. Since $P_0 \cup \cdots \cup P_{k-1} = \omega$, there is an $i < k$ such that $P_i$ is infinite. Let $L$ be any infinite limit-homogeneous set for $c$ contained in $P_i$. Then $\Psi^{0^\omega \oplus L}(0) \simeq 0$, which contradicts the choice of $\Psi$.
\end{proof}

\begin{corollary}
	$\LPO \nured \bal{\RT^1_k}$ for all $k$.	
\end{corollary}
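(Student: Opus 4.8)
The plan is to reduce this to Proposition~\ref{Prop:LPO_SRT_D} by showing that $\bal{\RT^1_k}$ is strongly Weihrauch reducible to $\bal{\D^2_k}$, and then invoking transitivity. Concretely, given a balanced coloring $c : \omega \to k$, I would set $\tilde{c}(x,y) = c(x)$ for all $\seq{x,y} \in [\omega]^2$. This $\tilde{c}$ is uniformly computable from $c$, and since $\lim_y \tilde{c}(x,y) = c(x)$ for every $x$, it is a stable coloring; note also that for a $1$-coloring, ``balanced'' simply means at least two of the color classes $c^{-1}(i)$ are infinite.

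The heart of the matter is the easy observation that an infinite set $H$ is homogeneous (respectively, limit-homogeneous) for $\tilde{c}$ with color $i$ if and only if $H \subseteq c^{-1}(i)$, i.e., if and only if $H$ is an infinite homogeneous set for $c$ with color $i$: one direction is immediate, and for the other, if $L$ is infinite and limit-homogeneous for $\tilde{c}$ with color $i$, then for each $x \in L$ some larger $y \in L$ satisfies $i = \tilde{c}(x,y) = c(x)$, so $L \subseteq c^{-1}(i)$. In particular, $\tilde{c}$ is balanced exactly when $c$ is, so $\tilde{c}$ is a legitimate instance of $\bal{\D^2_k}$, and the identity functional carries any $\D^2_k$-solution of $\tilde{c}$ to an $\RT^1_k$-solution of $c$. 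This gives $\bal{\RT^1_k} \sured \bal{\D^2_k}$, whence $\LPO \ured \bal{\RT^1_k}$ would imply $\LPO \ured \bal{\D^2_k}$, contradicting Proposition~\ref{Prop:LPO_SRT_D}.

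There is no genuine obstacle in this argument; it is purely bookkeeping. The one point worth flagging is that the reduction must target $\bal{\D^2_k}$ and not $\bal{\SRT^2_k}$: Proposition~\ref{Prop:LPO_SRT_D} shows that $\LPO$ does reduce to $\bal{\SRT^2_2}$, so it is precisely the failure of $\SRT^2_2 \ured \D^2_k$, refined to balanced colorings as in Proposition~\ref{Prop:LPO_SRT_D}, that powers this corollary.
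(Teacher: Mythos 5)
Your proposal is correct and is essentially the paper's own argument: the paper simply remarks that ``the usual proof that $\RT^1_k \ured \D^2_k$ shows that $\bal{\RT^1_k} \ured \bal{\D^2_k}$,'' and you have written out that usual reduction ($\tilde{c}(x,y) = c(x)$) and verified that it preserves balancedness before invoking Proposition~\ref{Prop:LPO_SRT_D} and transitivity. No issues.
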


\begin{proof}
	The usual proof that $\RT^1_k \ured \D^2_k$ shows that $\bal{\RT^1_k} \ured \bal{\D^2_k}$.	
\end{proof}

One generalization of the notion of unbalanced coloring is the following, in which merely one of the possible colors of homogeneous sets---rather than, all but one---is omitted.

\begin{definition}
	Let $c : [\omega]^2 \to k$ be a coloring and $X$ a set. The coloring $c$ is \emph{thin-unbalanced on $X$} if for some $i < k$, there is no infinite homogeneous set for $c$ contained in $X$ with color $i$. The color $i$ is called \emph{a witness of thin-unbalancing for $c$ on $X$}. If $c$ is not thin-unbalanced on $X$, it is \emph{thin-balanced on $X$}.
\end{definition}

When $X = \omega$, we shall simply say $c$ is thin-unbalanced / thin-balanced. Note that if $k = 2$, then $c$ is thin-unbalanced on a set if and only if it is unbalanced on that set in the sense of Definition \ref{def:unbalanced}, which in turn holds if and only if $c$ avoids one of its two colors on that set.

\begin{definition}
For $\mathsf{P} \in \{\SRT^2_k, \mathsf{D}^2_k\}$, we define the following variations on $\mathsf{P}$:
\begin{itemize}
	\item $\dwub{\mathsf{P}}$ is the problem whose instances are pairs $\seq{c,\ell}$ where $c$ is a thin-unbalanced instance of $\mathsf{P}$ and $\ell : \omega \to k$ is a function such that $\lim_y \ell(y)$ exists and is a witness of thin-unbalancing for $c$, and the solutions to such a pair are the $\mathsf{P}$-solutions to $c$.
	\item $\wub{\mathsf{P}}$ is the problem whose instances are pairs $\seq{c,i}$ where $c$ is a thin-unbalanced instance of $\mathsf{P}$ and $i$ is a witness of thin-unbalancing for $c$, and the solutions to such a pair are the $\mathsf{P}$-solutions to $c$.
\end{itemize}
\end{definition}

The above are arguably not natural problems from a combinatorial point of view, and we will not study them in their own right. Rather, our interest is in what they can reveal about $\SRT^2_k$ and $\D^2_k$. As we will see, the above restrictions capture various elements of standard proofs of the latter principles.

\begin{proposition}
	\
	\begin{enumerate}
		\item For $\mathsf{P} \in \{\SRT^2_k, \D^2_k\}$, we have $\LPO \ured \wub{\mathsf{P}} \sured \dwub{\mathsf{P}}$. 
		\item $\wub{\SRT^2_k} \times \wub{\SRT^2_2} \sured \wub{\SRT^2_k}$ and $\wub{\D^2_k} \times \wub{\D^2_2} \sured \wub{\D^2_k}$.
	\end{enumerate}
\end{proposition}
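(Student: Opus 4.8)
The plan is to prove the three reductions separately, with one construction serving both reductions in part~(2). The reduction $\wub{\mathsf{P}} \sured \dwub{\mathsf{P}}$ is essentially trivial: given an instance $\seq{c,i}$ of $\wub{\mathsf{P}}$, output $\seq{c,\ell}$, where $\ell$ is the constant function with value $i$; then $\lim_y \ell(y) = i$ is still a witness of thin-unbalancing for $c$, so $\seq{c,\ell}$ is a legitimate instance of $\dwub{\mathsf{P}}$ whose solutions are exactly the $\mathsf{P}$-solutions to $c$, and the backward functional can be the identity.

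For $\LPO \ured \wub{\mathsf{P}}$, the same construction handles both $\mathsf{P} = \SRT^2_k$ and $\mathsf{P} = \D^2_k$. Given an instance $S$ of $\LPO$, define $c : [\omega]^2 \to k$ by letting $c(x,y) = 1$ if $S(z) = 0$ for all $z \le x$ and $S(z) = 1$ for some $z < y$, and $c(x,y) = 0$ otherwise; informally, $c(x,y) = 1$ exactly when the first $1$ of $S$ lies strictly between $x$ and $y$. This is uniformly computable from $S$. If $S = 0^\omega$ then $c \equiv 0$, and if $S = 0^n 1^\omega$ then $\lim_y c(x,y)$ is $1$ for $x < n$ and $0$ for $x \ge n$; in either case $c$ is stable and the class $\{x : \lim_y c(x,y) = 1\}$ is finite, so $c$ has no infinite homogeneous set of color $1$ (hence no infinite limit-homogeneous set of color $1$), and $\seq{c,1}$ is a legitimate instance of $\wub{\mathsf{P}}$. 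I would output $\seq{c,1}$. Every $\mathsf{P}$-solution $H$ to $c$---an infinite homogeneous set if $\mathsf{P} = \SRT^2_k$, an infinite limit-homogeneous set if $\mathsf{P} = \D^2_k$---has color $0$, since the color-$1$ class is finite; and the key observation is that if $S = 0^n 1^\omega$ then homogeneity, respectively limit-homogeneity, of $H$ forces $\min H \ge n$. So, from $S \oplus H$, the backward functional reads $S(0), \dots, S(\min H)$: if one of these bits is $1$ it outputs the $\LPO$-solution for $0^n 1^\omega$, and otherwise it outputs the solution for $0^\omega$. This is correct: if $S = 0^\omega$ these bits are all $0$, while if $S = 0^n 1^\omega$ then $n \le \min H$, so $S(n) = 1$ is among the bits read.

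For part~(2) I would use one construction in both cases. Given an instance $\seq{c,i}$ of $\wub{\SRT^2_k}$ (resp.\ $\wub{\D^2_k}$) and an instance $\seq{d,j}$ of $\wub{\SRT^2_2}$ (resp.\ $\wub{\D^2_2}$), define $e : [\omega]^2 \to k$ by $e(x,y) = c(x,y)$ if $d(x,y) = 1 - j$ and $e(x,y) = i$ if $d(x,y) = j$. Since $c$ and $d$ are stable, so is $e$: $\lim_y e(x,y) = i$ when $\lim_y d(x,y) = j$, and $\lim_y e(x,y) = \lim_y c(x,y)$ otherwise. Because $c$ and $d$ are stable \emph{and} thin-unbalanced, the classes $\{x : \lim_y c(x,y) = i\}$ and $\{x : \lim_y d(x,y) = j\}$ are finite (an infinite such class would contain an infinite homogeneous set of the forbidden color), so $\{x : \lim_y e(x,y) = i\}$ is finite too, and $i$ is a witness of thin-unbalancing for $e$. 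I would output $\seq{e,i}$, with backward functional $H \mapsto \seq{H,H}$. The content of the claim is that every $\SRT^2_k$-solution (resp.\ $\D^2_k$-solution) $H$ to $e$ is simultaneously a solution to $\seq{c,i}$ and to $\seq{d,j}$. Indeed, since $i$ witnesses thin-unbalancing for $e$, the (limit-)color $b$ of $H$ is different from $i$; in the homogeneous case this forces $d(x,y) = 1 - j$ (otherwise $e(x,y) = i \neq b$) and then $c(x,y) = b$, for all $x < y$ in $H$, so $H$ is homogeneous for both $c$ and $d$, and in the limit-homogeneous case the same conclusion holds in the limit, using that $\lim_y e(x,y) = b \neq i$ forces $\lim_y d(x,y) = 1 - j$ and $\lim_y c(x,y) = b$. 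Since the backward functional ignores the instances, the reduction is strong.

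The main obstacle I anticipate is the verification in part~(2) that the folded coloring $e$ is still thin-unbalanced with a \emph{computable} witness: this is exactly where both hypotheses are used, via the equivalence, for a stable coloring, between having no infinite homogeneous set of a given color $i$ and the $i$-th limit-class being finite. The remaining points---stability of $c$ and $e$, the bound $\min H \ge n$ in part~(1), and the transfer of solutions in part~(2)---are routine.
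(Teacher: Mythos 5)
Your proof is correct and follows essentially the same route as the paper: the folded coloring in part (2) is literally the paper's $e(x,y)=i_c$ iff $c(x,y)=i_c$ or $d(x,y)=i_d$, and your part (1) coding of $\LPO$ is a minor variant of the paper's $c(x,y)=1$ iff $S(x)=S(y)$ (the paper reduces $\LPO$ to $\wub{\D^2_2}$ and notes the remaining reductions are obvious). The only real difference is in verifying that $e$ is thin-unbalanced: you use the equivalence, for stable colorings, between having no infinite homogeneous set of color $i$ and finiteness of the $i$-th limit class, whereas the paper applies Ramsey's theorem to an auxiliary two-coloring of a hypothetical homogeneous set; both are valid, and yours is slightly more elementary here since it exploits stability directly.
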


\begin{proof}
	For part (1), it is enough to show that $\LPO \ured \wub{\D^2_2}$, the rest of the reductions being obvious. This is proved much like Proposition \ref{prop:LPONONSRT23}. Given an instance $S$ of $\LPO$, define $c : [\omega]^2 \to 2$ by
	\[
	c(x,y) =
	\begin{cases}
		1 & \text{if } S(x) = S(y),\\
		0 & \text{otherwise.}
	\end{cases}
	\]
	If $S = 0^\omega$ then $c(x,y) = 1$ for all $x < y$, and if $S = 0^n1^\omega$ then $\lim_y c(x,y) = 1$ for all $x > n$ and $\lim_y c(x,y) = 0$ for all $x \leq n$. Either way, $c$ is an instance of $\wub{\D^2_2}$ with witness of thin-unbalancing $0$. Now if $L$ is any limit-homogeneous set for $c$ then $S = 0^\omega$ if and only if there is an $x \leq \min L$ such that $S(x) = 1$.
	
	For part (2), we prove the result for $\SRT^2_k$, the proof for $\D^2_k$ being similar. Let $c : [\omega]^2 \to k$ and $d : [\omega]^2 \to 2$ be instances of $\wub{\SRT^2_k}$ and $\wub{\SRT^2_2}$, respectively. Say the witnesses of thin-unbalancing for $c$ and $d$ are $i_c < k$ and $i_d < 2$, respectively. Define $e : [\omega]^2 \to k$ by
	\[
	e(x,y) =
	\begin{cases}
		i_c & \text{if } c(x,y) = i_c \text{ or } d(x,y) = i_d,\\
		c(x,y) & \text{otherwise.}
	\end{cases}
	\]
        Notice that $e$ is stable.
	We claim that $e$ is thin-unbalanced as witnessed by $i_c$. Indeed, if $H$ were infinite and homogeneous for $e$ with color $i_c$ then we could define $f : [H]^2 \to 2$ by
	\[
		f(x,y) =
	\begin{cases}
		0 & \text{if } c(x,y) = i_c,\\
		1 & \text{otherwise.}
	\end{cases}
	\]
	Any infinite homogeneous set for $f$ contained in $H$ with color $0$ would be homogeneous for $c$ with color $i_c$, and any infinite homogeneous set for $f$ contained in $H$ with color $1$ would be homogeneous for $d$ with color $i_d$. Neither of these is possible by assumption, so the claim holds. Hence, $e$ is an instance of $\wub{\SRT^2_k}$, and it is clear that any infinite homogeneous set for $e$ is homogeneous for both $c$ and $d$.
\end{proof}

As we will see in Proposition \ref{CFIprop}, $\wub{\mathsf{P}} \uequiv \dwub{\mathsf{P}}$ for $\mathsf{P} \in \{\SRT^2_2, \D^2_2\}$.
Note that in part (1) above, the reduction from $\LPO$ to $\wub{\mathsf{P}}$ cannot be improved from $\ured$ to $\sured$. Indeed, it follows from a result of Brattka and Rakotoniaina~\cite[Corollary 3.15]{BR-2017} that $\mathsf{LPO} \nsured \RT^n_k$ for all $n,k \geq 1$.


\begin{theorem}\label{T:Dchar}
	Let $\mathsf{P}$ be a problem. Then $\mathsf{P} \ured \dwub{\D^2_k}$ if and only if $\LPO \times \mathsf{P} \ured \D^2_k$.
\end{theorem}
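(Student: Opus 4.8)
The plan is to prove the two implications separately. For the forward direction, it suffices to establish the single reduction $\LPO \times \dwub{\D^2_k} \ured \D^2_k$; granting this, if $\mathsf{P} \ured \dwub{\D^2_k}$ then monotonicity of the parallel product gives $\LPO \times \mathsf{P} \ured \LPO \times \dwub{\D^2_k} \ured \D^2_k$. To prove $\LPO \times \dwub{\D^2_k} \ured \D^2_k$, I would take an $\LPO$-instance $S$ and a $\dwub{\D^2_k}$-instance $\seq{c,\ell}$ with $i^\ast = \lim_y \ell(y)$ the given witness of thin-unbalancing, write $n$ for the location of the $1$ in $S$ (``$\infty$'' if $S = 0^\omega$; note that ``$x < n$'' is decidable from $S \res(x+1)$), and build the coloring $d : [\omega]^2 \to k$ that equals $c(x,y)$ when $y < n$, equals $c(x-n,\,y-n)$ when $n \leq x < y$, and equals $\ell(y)$ when $x < n \leq y$. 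This $d$ is uniformly computable from $S \oplus c \oplus \ell$ and stable; when $S = 0^n 1^\omega$ the limit-color of every $x < n$ is $i^\ast$, while the tail $[n,\infty)$ carries a shifted copy of $c$, so $d$ inherits thin-unbalancedness with witness $i^\ast$, and in particular every $\D^2_k$-solution $L$ of $d$ has color $\neq i^\ast$, lies in $[n,\infty)$, and shifts down to an infinite limit-homogeneous set $L - n$ for $c$. Since $L \subseteq [n,\infty)$, inspecting $S$ at $\min L$ recovers the $\LPO$-answer (and $n$, hence $L-n$, when $S \neq 0^\omega$), so solutions of $d$ decode uniformly to solutions of $\seq{S, \seq{c,\ell}}$.

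For the backward direction, fix $\Phi, \Psi$ witnessing $\LPO \times \mathsf{P} \ured \D^2_k$; I must produce, uniformly from a $\mathsf{P}$-instance $X$, a $\dwub{\D^2_k}$-instance $\seq{c_X, \ell_X}$ whose $\D^2_k$-solutions compute, together with $X$, $\mathsf{P}$-solutions to $X$. The naive candidate $c_X = \Phi^{\seq{0^\omega, X}}$ is a valid $\D^2_k$-instance with the right decoding property but need not be thin-unbalanced, so the task is to force thin-unbalancedness without destroying decodability. The plan is to build $c_X$ (and the witness-approximation $\ell_X$) by a forcing construction modeled on the proof of Lemma~\ref{lem:RT22NON}, but producing the coloring instead of a homogeneous set: one reserves a color $i^\ast$, and arranges that (i) extensions of conditions can always avoid assigning new elements the limit-color $i^\ast$, so $c_X$ ends up thin-unbalanced with witness $i^\ast$ and $\ell_X \to i^\ast$; and (ii) $c_X$ locally reproduces $\Phi^{\seq{0^\omega, X}}$ together with the shifted colorings $\Phi^{\seq{0^n 1^\omega, X}}$, where $n$ is read off from where (if anywhere) the guessed witness ``switches'', so that each infinite limit-homogeneous set for $c_X$ is, after an appropriate shift, a $\D^2_k$-solution to one of these instances and $\Psi$ returns a $\mathsf{P}$-solution to $X$. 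The point is that the only non-uniformity between the $0^\omega$ and $0^n 1^\omega$ cases is the single output bit of $\Psi$ recording the $\LPO$-answer, which is exactly the $\LPO$-worth of information encoded in the delayed convergence of $\ell_X$; and the branch where the forcing stalls is handled, as in Lemma~\ref{lem:RT22NON}, by the cone-avoidance basis theorem, which yields an $X$-computable solution and so makes the reduction trivial for that $X$.

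The forward direction is routine once one lands on the shift-and-pad construction of $d$. The real obstacle is the backward direction: carrying out the forcing so that thin-unbalancedness of $c_X$ and correctness of the decoding hold simultaneously, and synchronizing the convergence of $\ell_X$ with the bit by which $\Psi$ separates the two $\LPO$-answers. I expect essentially all of the effort to go there.
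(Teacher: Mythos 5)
Your forward direction is correct and is essentially the paper's: the paper uses the slightly more compact coloring $d(x,y)=c(x,y)$ if $S(x)=S(y)$ and $d(x,y)=\ell(y)$ otherwise, but your shift-and-pad version achieves the same effect and your verification of it is sound.

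The backward direction has a genuine gap, and it is exactly where you predicted the effort would go. A reduction $\mathsf{P}\ured\dwub{\D^2_k}$ requires a \emph{single} Turing functional producing $\seq{c_X,\ell_X}$ from every instance $X$, together with a single backward functional. Your plan builds $c_X$ by forcing and then branches on whether the forcing ``stalls,'' handling the stalling branch by cone avoidance to get an $X$-computable solution that makes the reduction ``trivial for that $X$.'' Neither half of this survives the uniformity requirement: a sufficiently generic filter is not computed uniformly from $X$; the mere existence of an $X$-computable solution (obtained non-uniformly via a basis theorem) does not let a fixed functional find one; and no fixed functional can decide which branch it is in. This is precisely why the method of Lemma~\ref{lem:RT22NON} is suited to proving \emph{non}-reductions such as Theorem~\ref{T:NONLPORT2}, where one only needs a single adversarial instance, and not to constructing reductions.

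The missing idea is that no case division is needed: thin-unbalancedness can be extracted uniformly from the given functionals $\Phi,\Psi$ themselves. Let $c=\Phi^{0^\omega\oplus X}$ and consider the $\Pi^{0,X}_1$ class $\mathcal{C}$ of $k$-partitions $\seq{P_0,\dots,P_{k-1}}$ of $\omega$ such that $\Psi^{0^\omega\oplus X\oplus F}(0)\simeq 0$ for every $i<k$ and every finite $F\subseteq P_i$. This class is \emph{always} empty: any member has an infinite $P_i$, hence contains an infinite limit-homogeneous $L\subseteq P_i$ for $c$, and on $L$ the functional $\Psi$ fails to report that $S=0^\omega$, contradicting correctness of the reduction on the instance $\seq{0^\omega,X}$. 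Emptiness is witnessed by a level $m$ found uniformly from $X$ by compactness: every partition admits some $i$ and finite $F\subseteq P_i\res m$ with $\Psi^{0^\omega\oplus X\oplus F}(0)\converges=1$. Taking $n$ beyond the uses of these finitely many computations and setting $d=\Phi^{0^n1^\omega\oplus X}$ (which agrees with $c$ on the relevant part), the color $i$ that compactness attaches to the true limit-partition of $d$ must have $P_i$ finite, since otherwise an infinite limit-homogeneous set extending the corresponding $F$ would make $\Psi$ falsely claim $S=0^\omega$. Thus $i$ is a witness of thin-unbalancing for $d$, it is $\Delta^0_2$-approximable from $X$ because it depends only on $\lim_y d(x,y)$ for $x<m$, and every solution to $d$ decodes via $\Psi$ to a $\mathsf{P}$-solution to $X$ --- all uniformly, with no forcing and no branching.
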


\begin{proof}
	For the forward direction, we prove that $\LPO \times \dwub{\D^2_k} \ured \D^2_k$. Again, we emulate the proof of Proposition \ref{prop:LPONONSRT23}. Let $S \in 2^\omega$ be an instance of $\LPO$. Let $\seq{c,\ell}$ be an instance of $\dwub{\D^2_k}$, so that $c$ is a stable coloring $[\omega]^2 \to k$, and $\ell : \omega \to k$ is a function with $\lim_y \ell(y) = i < k$ a witness of thin-unbalancing for $c$. Define $d : [\omega]^2 \to k$ by
	\[
	d(x,y) =
	\begin{cases}
		c(x,y) & \text{if } S(x) = S(y),\\
		\ell(y) & \text{otherwise.}
	\end{cases}
	\]
	Thus, if $S = 0^\omega$ then $d = c$, and if $S = 0^n1^\omega$ for some $n$ then $c(x,y) = d(x,y)$ for all $x \geq n$ and $\lim_y d(x,y) = \lim_y \ell(y) = i$ for all $x < n$. Since $c$ has no infinite limit-homogeneous set with color $i$, it follows that every infinite limit-homogeneous set $L$ for $d$ is also limit-homogeneous for $c$. Moreover, we have that $S = 0^n1^\omega$ if and only if $(\exists x \leq \min L)[S(x) = 1]$. Hence, $\seq{\{b\},L}$, where $b$ is $0$ or $1$ depending on whether $S = 0^\omega$ or $S = 0^n1^\omega$ for some $n < L$, is a uniformly $S \oplus L$-computable solution to the $\LPO \times \dwub{\D^2_k}$-instance $\seq{S,\seq{c,\ell}}$.
	
	For the reverse direction, fix a problem $\mathsf{P}$ such that $\LPO \times \mathsf{P} \ured \D^2_k$, say via functionals $\Phi$ and $\Psi$. Fix an instance $X$ of $\mathsf{P}$. We describe a uniform procedure to define an $X$-computable thin-unbalanced stable coloring $d : [\omega]^2 \to k$ with a witness given in a $\Delta^0_2$ way, and a uniform procedure for turning any infinite limit-homogeneous set for $d$ into a $\mathsf{P}$-solution for $X$. To begin, let $c = \Phi^{0^\omega \oplus X}$, which is a stable coloring $[\omega]^2 \to k$ by assumption. Define $\mathcal{C}$ to be the $\Pi^{0,X}_1$ class consisting of all partitions $\seq{P_0, \ldots, P_{k-1}}$ of $\omega$ such that
	\[
		(\forall i < k)(\forall \text{ finite } F \subseteq P_i)[\Psi^{0^\omega \oplus X \oplus F}(0) \simeq 0].
	\]
(As in the proof of Theorem~\ref{T:NONLPORT2}, we think of $\Psi^{0^\omega \oplus X \oplus F}$ as the characteristic function of a join.)

        It must be that $\mathcal{C} = \emptyset$. For suppose otherwise, and choose any $\seq{P_0, \ldots, P_{k-1}} \in \mathcal{C}$ and an $i < k$ such that $P_i$ is infinite. Let $L \subseteq P_i$ be an infinite limit-homogeneous set for $c$. Then by the definition of $\mathcal{C}$, we have $\Psi^{0^\omega \oplus X \oplus L}(0) \simeq 0$, which is a contradiction because $\seq{0^\omega,X}$ is an instance of $\LPO \times \mathsf{P}$, and we should thus have $\Psi^{0^\omega \oplus X \oplus L}(0) \converges = 1$. So $\mathcal{C}$ is empty, as claimed. By compactness, we can uniformly $X$-computably find an $m$ such that for every partition $\seq{P_0, \ldots, P_{k-1}}$ of $\omega$, there are an $i < k$ and a finite $F \subseteq P_i \res m$ such that $\Psi^{0^\omega \oplus X \oplus F}(0) \converges = 1$. Let $u \geq m$ be a bound on the uses of all these computations, for all possible such $F$. Choose $n > u$ large enough so that $\Phi^{0^n1^\omega \oplus X}$ and $c = \Phi^{0^\omega \oplus X}$ agree below $u$. Let $S = 0^n 1^\omega$, and let $d = \Phi^{S \oplus X}$. Note that $d$ is uniformly $X$-computable.
	
	We claim that $d$ is thin-unbalanced. To see this, let $\seq{P_0,\ldots,P_{k-1}}$ be the partition of $\omega$ given by $P_i = \{x \in \omega: \lim_y d(x,y) = i\}$. Let $i < k$ and $F \subseteq P_i \res m$ be as above. If $P_i$ were infinite, then there would be an infinite limit-homogeneous set $L$ for $d$ having $F$ as an initial segment, and by construction, this set would satisfy $\Psi^{S \oplus X \oplus L}(0) \converges = 1$ even though $S \neq 0^\omega$. Thus, $P_i$ is finite, so $i$ is a witness to thin-unbalancing for $d$. Moreover, since $i$ depends only on $\lim_y d(x,y)$ for $x < m$, it follows that $i$ can be approximated from $d$, and hence from $X$, in a uniform $\Delta^0_2$ way. So $d$ is an instance of $\dwub{\D^2_k}$. Now if $L$ is any infinite limit-homogeneous set for $d$, we must have $\Psi^{S \oplus X \oplus L} = \seq{\{1\}, Y}$, where $Y$ is a $\mathsf{P}$-solution to $X$. Hence, there is a uniform way to convert $X \oplus L$ into a $\mathsf{P}$-solution for $X$, as desired.
\end{proof}

A succinct way to express the characterization given by the preceding theorem is that $\dwub{\D^2_k} \equiv\sub{W} \sup_{\ured} \{ \mathsf{P} : \mathsf{P} \times \LPO \ured \D^2_k\}$. We can obtain several other results of this sort, the proofs of which are similar to the preceding theorem.

\begin{proposition}
	The following all exist and are all Weihrauch equivalent:
\begin{enumerate}
	\item $\dwub{\D^2_k}$;
	\item $\sup_{\ured} \{ \mathsf{P} : \mathsf{P} \times \LPO \ured \D^2_k\}$;
	\item $\sup_{\ured} \{ \mathsf{P} : \mathsf{P} \times \LPO \ured \dwub{\D^2_k}\}$;
	\item $\sup_{\ured} \{ \mathsf{P} : \mathsf{P} \times \dwub{\D^2_k} \ured \dwub{\D^2_k}\}$;
	\item $\sup_{\ured} \{ \mathsf{P} : \mathsf{P} \times \dwub{\D^2_k} \ured \D^2_k\}$.
\end{enumerate}
\end{proposition}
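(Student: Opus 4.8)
The plan is to reduce everything to the slogan form of Theorem \ref{T:Dchar}: I will show that, for an arbitrary problem $\mathsf{P}$, each of the four membership conditions defining the classes in (2)--(5) --- namely $\mathsf{P}\times\LPO\ured\D^2_k$, $\mathsf{P}\times\LPO\ured\dwub{\D^2_k}$, $\mathsf{P}\times\dwub{\D^2_k}\ured\dwub{\D^2_k}$, and $\mathsf{P}\times\dwub{\D^2_k}\ured\D^2_k$ --- is equivalent to $\mathsf{P}\ured\dwub{\D^2_k}$. Granting this, each of those four classes equals $\{\mathsf{P}:\mathsf{P}\ured\dwub{\D^2_k}\}$, which is downward closed under $\ured$ and has $\dwub{\D^2_k}$ as a greatest element; hence all four suprema exist and coincide with $\dwub{\D^2_k}$, which is (1). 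The first of these equivalences is precisely Theorem \ref{T:Dchar}.

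The auxiliary fact I would establish first is $\LPO\times\dwub{\D^2_k}\uequiv\dwub{\D^2_k}$; the nontrivial direction $\LPO\times\dwub{\D^2_k}\ured\dwub{\D^2_k}$ is proved by the same device as the forward direction of Theorem \ref{T:Dchar}. Given an $\LPO$-instance $S$ and a $\dwub{\D^2_k}$-instance $\seq{c,\ell}$ with $\lim_y\ell(y)=i$ a witness of thin-unbalancing, set $d(x,y)=c(x,y)$ if $S(x)=S(y)$ and $d(x,y)=\ell(y)$ otherwise. Then $\seq{d,\ell}$ is again a $\dwub{\D^2_k}$-instance: when $S=0^n1^\omega$, the colours $x<n$ all have $\lim_y d(x,y)=i$, while for $x\geq n$ one has $\lim_y d(x,y)=\lim_y c(x,y)$, so $d$ is thin-unbalanced with the same witness $i$, hinted by the same $\ell$. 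Any infinite limit-homogeneous set $L$ for $d$ has colour $j\neq i$, forcing $L\subseteq[n,\infty)$, whence $L$ is limit-homogeneous for $c$; and $S=0^n1^\omega$ iff $(\exists x\le\min L)[S(x)=1]$. With this lemma: $\mathsf{P}\times\LPO\ured\dwub{\D^2_k}$ follows from $\mathsf{P}\ured\dwub{\D^2_k}$ since $\mathsf{P}\times\LPO\ured\dwub{\D^2_k}\times\LPO\uequiv\dwub{\D^2_k}$, and conversely $\mathsf{P}\ured\mathsf{P}\times\LPO$; applying Theorem \ref{T:Dchar} to the problem $\mathsf{P}\times\dwub{\D^2_k}$ and absorbing the resulting extra $\LPO$ factor via the lemma shows $\mathsf{P}\times\dwub{\D^2_k}\ured\dwub{\D^2_k}\iff\mathsf{P}\times\dwub{\D^2_k}\ured\D^2_k$; and $\mathsf{P}\times\dwub{\D^2_k}\ured\D^2_k$ implies $\mathsf{P}\ured\dwub{\D^2_k}$ because $\LPO\ured\dwub{\D^2_k}$ gives $\LPO\times\mathsf{P}\ured\mathsf{P}\times\dwub{\D^2_k}\ured\D^2_k$, so Theorem \ref{T:Dchar} applies.

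The one remaining implication, and the place where the real work lies, is $\mathsf{P}\ured\dwub{\D^2_k}\ \Rightarrow\ \mathsf{P}\times\dwub{\D^2_k}\ured\D^2_k$. It suffices (and is in fact equivalent, via the lemma and Theorem \ref{T:Dchar}) to prove that $\dwub{\D^2_k}$ is idempotent under parallel product, i.e.\ $\dwub{\D^2_k}\times\dwub{\D^2_k}\ured\dwub{\D^2_k}$, since then $\mathsf{P}\times\dwub{\D^2_k}\ured\dwub{\D^2_k}\times\dwub{\D^2_k}\ured\dwub{\D^2_k}\sured\D^2_k$. For this, given $\dwub{\D^2_k}$-instances $\seq{c_0,\ell_0}$ and $\seq{c_1,\ell_1}$ I would first use the $\Delta^0_2$ hints to renormalize computably, replacing $c_j(x,y)$ by $\pi_{\ell_j(y)}(c_j(x,y))$ where $\pi_v$ is the transposition of $v$ and $k-1$, so that both forbidden colours become $k-1$ and the two forbidden limit-classes are finite; I would then merge $c_0$ and $c_1$ into a single stable $k$-colouring, routing the second problem through the (finite) forbidden class of the first, in the spirit of Theorem \ref{T:Dchar}'s forward direction and of the earlier $\wub{\D^2_k}\times\wub{\D^2_2}$ argument, and verify that an infinite limit-homogeneous set for the merged colouring decodes uniformly (using $c_0,c_1$) to limit-homogeneous sets for both. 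The hard part is exactly this merging step: with $k$ colours on each side one cannot simply pass to a common refinement of the two limit-partitions, so the construction must lean on the finiteness of the forbidden classes and on the freedom in choosing the hint of the merged instance, and getting the colour bookkeeping right is the technical heart of the proof.
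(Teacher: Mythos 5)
Your overall architecture is sound, and for items (1)--(3) it is exactly the intended ``similar to Theorem \ref{T:Dchar}'' argument: the identification of (1) with (2) is literally Theorem \ref{T:Dchar}, and your lemma that $\LPO \times \dwub{\D^2_k} \ured \dwub{\D^2_k}$ (the forward construction of that theorem together with the observation that the output coloring is again a $\dwub{\D^2_k}$-instance with the same hint $\ell$) correctly disposes of (3). Your further observation that the classes in (4) and (5) coincide, so that both items reduce to the single claim $\dwub{\D^2_k} \times \dwub{\D^2_k} \ured \dwub{\D^2_k}$, is also correct and clean.

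The gap is that this idempotence claim is precisely where you stop proving and start gesturing, and the gesture does not work for $k \geq 3$. After your renormalization, each factor amounts to a $\Delta^0_2$ $(k-1)$-partition of a $\Delta^0_2$-cofinite set, and a solution to the merged instance is an infinite subset of a single limit class of one $k$-partition; for the backward functional to succeed, that class must in effect determine a class of each factor, i.e., the $(k-1)^2$ cells of the common refinement must be packed into $k-1$ (or even $k$) classes, which fails by counting as soon as $k \geq 3$. ``Routing through the finite forbidden class'' cannot repair this: that device, as used in the forward direction of Theorem \ref{T:Dchar}, transmits only finitely much information (the threshold $n$ of the $\LPO$-instance), whereas here the obstruction lies in the infinite parts of the partitions; and the $\wub{\SRT^2_k} \times \wub{\SRT^2_2}$ argument you invoke works only because one factor has two colors, so that avoiding its forbidden color already pins down its answer. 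For $k = 2$ idempotence is immediate, since $\dwub{\D^2_2} \uequiv \CFI_{\Delta^0_2}$ and one simply intersects the two $\Delta^0_2$ cofinite sets, but for $k \geq 3$ your proposal does not establish it, and it is not clear that any merging of the shape you describe can. The paper itself only asserts that the proofs are ``similar to the preceding theorem,'' so there is no written argument to compare against; but as it stands your treatment of (4) and (5) is incomplete at exactly the step you yourself flag as the technical heart.
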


We do not know a similar characterization for $\SRT^2_k$, nor even an answer to the following question. (It is worth noting that the Weihrauch lattice is not complete. Indeed, by~\cite[Proposition 3.15]{higuchipauly}, it does not have any nontrivial infinite suprema.)

\begin{question}
	Does $\sup_{\ured} \{ \mathsf{P} : \mathsf{P} \times \LPO \ured \SRT^2_k\}$ exist?
\end{question}

\noindent As a partial step, we have the following:

\begin{proposition}
	Let $\mathsf{P}$ be a problem.
	\begin{enumerate}
		\item If $P \ured \dwub{\SRT^2_k}$ then $\LPO \times \mathsf{P} \ured \mathsf{SRT}^2_k$.
		\item If $\LPO \times \mathsf{P} \ured \mathsf{SRT}^2_k$
                  then $\mathsf{P}$ is Weihrauch reducible to the
                  problem whose instances are pairs $\seq{c,\ell}$
                  where $c$ is an instance of $\SRT^2_k$ and $\ell :
                  \omega \to k$ is a function such that $\lim_y
                  \ell(y)$ exists and is a witness to thin-unbalancing
                  for $c$ on some set that is low relative to $c$, and the solutions to such a pair are the $\SRT^2_k$-solutions to $c$.
	\end{enumerate}
\end{proposition}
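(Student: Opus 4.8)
The plan is to model the argument on the proof of Theorem~\ref{T:Dchar}, replacing ``limit-homogeneous'' by ``homogeneous'' throughout; the substitution is harmless for part~(1) but forces a genuinely different treatment in part~(2).

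For part~(1) I would first prove $\LPO \times \dwub{\SRT^2_k} \ured \SRT^2_k$ and then precompose with the hypothesis $\mathsf{P} \ured \dwub{\SRT^2_k}$. Given an $\LPO$-instance $S$ and a $\dwub{\SRT^2_k}$-instance $\seq{c,\ell}$, write $i = \lim_y \ell(y)$, which is a witness of thin-unbalancing for $c$, so that $c$ has no infinite homogeneous set of color $i$. Define, exactly as in Theorem~\ref{T:Dchar}, the stable coloring $d(x,y) = c(x,y)$ if $S(x)=S(y)$ and $d(x,y) = \ell(y)$ otherwise. If $S = 0^\omega$ then $d = c$; if $S = 0^n1^\omega$ then $d(x,y) = c(x,y)$ for $x \geq n$ while $\lim_y d(x,y) = i$ for $x < n$. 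The crucial observation is that an infinite homogeneous set $H$ for $d$ cannot have color $i$: a tail of such an $H$ would be contained in $[n,\infty)$ and homogeneous for $c$ with color $i$. Moreover no $x < n$ can lie in $H$, since $\lim_y d(x,y) = i$ differs from the color of $H$. Hence $H \subseteq [n,\infty)$ and $H$ is homogeneous for $c$, and as usual $S = 0^n1^\omega$ if and only if $(\exists x \leq \min H)[S(x)=1]$; so the $\LPO$-answer is read off uniformly and $H$ is passed along as the $\SRT^2_k$-solution.

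For part~(2) I would follow the reverse direction of Theorem~\ref{T:Dchar}. Fix $\Phi,\Psi$ witnessing $\LPO \times \mathsf{P} \ured \SRT^2_k$ and an instance $X$ of $\mathsf{P}$, let $c = \Phi^{0^\omega \oplus X}$, and let $\mathcal{C}$ be the $\Pi^{0,X}_1$ class of partitions $\seq{P_0,\dots,P_{k-1}}$ of $\omega$ such that $\Psi^{0^\omega \oplus X \oplus F}(0) \simeq 0$ for every $i < k$ and every finite $F \subseteq P_i$ that is homogeneous for $c$ with color $i$. The key difference from the $\D^2_k$ case is that $\mathcal{C}$ need not be empty: for an arbitrary partition the part $P_i$ need not consist of points whose $c$-limit is $i$, so one cannot greedily manufacture an infinite homogeneous set of color $i$ inside it. I would therefore run two branches in parallel. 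If $\mathcal{C} = \emptyset$, proceed exactly as in Theorem~\ref{T:Dchar}: by compactness obtain $m$, a use bound $u \geq m$, and $n > u$ with $\Phi^{0^n1^\omega \oplus X}$ agreeing with $c$ below $u$; set $d = \Phi^{0^n1^\omega \oplus X}$; using that a finite homogeneous stem inside the relevant part extends, within that part, to an infinite homogeneous set of the same color, conclude that the compactness color is a witness of thin-unbalancing for $d$, approximable $\Delta^0_2$ from $X$, and that any infinite homogeneous set $L$ for $d$ gives $\Psi^{0^n1^\omega \oplus X \oplus L} = \seq{\{1\},Y}$ with $Y$ a $\mathsf{P}$-solution to $X$. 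If $\mathcal{C} \neq \emptyset$, apply a basis theorem to obtain a low member $\seq{P_0,\dots,P_{k-1}} \in \mathcal{C}$; by Ramsey's theorem $c$ has an infinite homogeneous set meeting some $P_i$ in an infinite set, which is then homogeneous for $c$ of some color $j$, and necessarily $j \neq i$, since an infinite $c$-homogeneous subset of $P_i$ of color $i$ would force $\Psi^{0^\omega \oplus X \oplus F}(0) \simeq 0$ along its finite stems, contradicting that this value must be $\converges = 1$ for the $\LPO$-instance $0^\omega$. Thus $\seq{P_0,\dots,P_{k-1}}$ is a low set witnessing thin-unbalancing of the target coloring at $i$, and any infinite homogeneous set for that coloring, being an $\SRT^2_k$-solution to $\Phi^{0^\omega \oplus X}$, carries a $\mathsf{P}$-solution to $X$ out through $\Psi$. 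Merging the two branches into a single target instance $\seq{c',\ell'}$ and a single pair of reduction functionals finishes the argument.

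The step I expect to be the main obstacle is the bookkeeping in the $\mathcal{C} \neq \emptyset$ branch: ensuring the basis member is low relative to the coloring $c'$ of the target instance rather than merely relative to $X$ --- which one can arrange by having $c'$ code $X$, so that $\mathcal{C}$ becomes a $\Pi^{0,c'}_1$ class to which a $c'$-relativized low (or cone-avoidance) basis theorem applies --- keeping the witnessing color approximable $X$-computably, and amalgamating the two cases, which cannot be decided from $X$, into one uniform reduction. It is precisely this delicacy, absent for limit-homogeneous sets where the canonical $\Delta^0_2$ partition already pins everything down, that forces the conclusion to speak of a set low relative to $c$ rather than of outright thin-unbalancing.
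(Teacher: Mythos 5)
Your proposal is correct and follows essentially the same route as the paper: part (1) by the coloring $d(x,y)=c(x,y)$ if $S(x)=S(y)$ and $\ell(y)$ otherwise, and part (2) by the modified $\Pi^{0,X}_1$ class with the homogeneity hypothesis on $F$, splitting on whether $\mathcal{C}$ is empty, coding $X$ into the coloring so that the low basis member is low relative to $c$. The only cosmetic difference is the unnecessary appeal to Ramsey's theorem in the $\mathcal{C}\neq\emptyset$ branch (pigeonhole on which $P_i$ is infinite suffices, since one only needs the \emph{absence} of color-$i$ homogeneous sets inside $P_i$); the case-merging you flag as delicate is likewise left implicit in the paper.
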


\begin{proof}
	Part (1) is proved just like the forward direction of Theorem
        \ref{T:Dchar}. For part (2), we proceed as in the proof of the reverse direction of Theorem \ref{T:Dchar}, only the $\Pi^{0,X}_1$ class $\mathcal{C}$ now consists of all partitions $\seq{P_0,\ldots,P_{k-1}}$ of $\omega$ such that
	\[
		(\forall i < k)(\forall \text{ finite } F \subseteq P_i)[(\forall x,y \in F)[c(x,y) = i] \to \Psi^{0^\omega \oplus X \oplus F}(0) \simeq 0].
	\]
We can assume that $c \equiv\sub{T} X$, because we can replace it by
the coloring $c'$ obtained by letting $c'(n,n+1)=X(n)$ and
$c'(x,y)=c(x,y)$ for all other pairs. An infinite solution to
$c'$ can be uniformly transformed into one to $c$ by thinning.
	
	Now, if $\mathcal{C} = \emptyset$, let $n$ be as in the corresponding case in the proof of Theorem \ref{T:Dchar}. For each $i < k$, let $P_i = \{x : \lim_y \Phi^{0^n1^\omega \oplus X}(x,y) = i\}$. Then for some $i < k$, there exists a finite $F \subseteq P_i$ such that $F$ is homogeneous for $\Phi^{0^n1^\omega \oplus X}$ with color $i$ and $\Psi^{0^n1^\omega \oplus X \oplus F}(0) \converges = 1$. Moreover, this $F$ and $i$ can be approximated in a $\Delta^{0,X}_2$ way (i.e., $\Delta^0_2$ relative to the instance $\seq{0^n1^\omega,X}$). Now if $\Phi^{0^n1^\omega \oplus X}$ had any infinite homogeneous set with color $i$, then $c$ would have such a set extending $F$, which would produce the same contradiction as in Theorem \ref{T:Dchar}.  Thus, it must be that $\Phi^{0^n1^\omega \oplus X}$ has no homogeneous set with color $i$, so in particular, it is thin-unbalanced (on the low set $\omega$).
	
	If, on the other hand, $\mathcal{C} \neq \emptyset$, then let $\seq{P_0,\ldots,P_{k-1}}$ be the canonical low-over-$X$ element of it (given by the proof of the low basis theorem). Clearly, we can approximate in a $\Delta^{0,X}_2$ way (in fact, in a $\Pi^{0,X}_1$ way), the least $i$ such that $P_i$ is infinite. Then $c = \Phi^{0^\omega \oplus X}$ must be thin-unbalanced on $P_i$ with witness $i$. Otherwise, we could take a homogeneous set $H$ for $c$ with color $i$ contained in $P_i$ and have, by the definition of $\mathcal{C}$, that $\Psi^{0^\omega \oplus X \oplus H}(0) \simeq 0$, which is a contradiction because $\seq{0^\omega,X}$ is an instance of $\mathsf{LPO} \times \mathsf{P}$, and we should thus have $\Psi^{0^\omega \oplus X \oplus H}(0) \converges = 1$.
\end{proof}

\begin{remark}
	With a view to some of the recent work on the algebraic structure of the Weihrauch degrees (\cite{BP-TA,Dzhafarov-TA}), Theorem \ref{T:Dchar} suggests a natural \emph{parallel quotient} operator on problems, given by $\mathsf{P}/\mathsf{Q} = \sup_{\ured}\{\mathsf{R} : \mathsf{R} \times \mathsf{Q} \ured \mathsf{P}\}$. We have no reason to think this operator is total, but studying the kinds of problems for which it is defined ought to be interesting in its own right.
\end{remark}

\section{The cofinite-to-infinite principle}\label{s:CFI}

In this section, we briefly depart from studying products, to investigate $\wub{\D^2_2}$ (in the
guise of a Weihrauch-equivalent principle introduced below) in the context of other weak Weihrauch degrees. Some of our terminology will be specific to the Weihrauch literature,
and we refer the reader to~\cite{BGP-TA} for any definitions we omit.

We begin by introducing the following ``cofinite set to infinite set'' principle.

\begin{definition}\label{def:cfi}
	$\CFI_{\Delta^0_2}$ is the restriction of $\D^2_2$ to colorings $c : [\omega]^2 \to 2$ such that $\lim_y c(x,y) = 1$ for almost all $x$.
\end{definition}

\noindent (Thus, informally, $\CFI_{\Delta^0_2}$ is the problem of finding an infinite subset of a cofinite set given by a $\Delta^0_2$ approximation.) The connection to the previous section is provided by the following result.



\begin{proposition}
\label{CFIprop}
$\CFI_{\Delta^0_2} \uequiv \wub{\D^2_2} \uequiv \dwub{\D^2_2}$.
\end{proposition}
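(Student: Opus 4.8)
The plan is to establish both equivalences through the short cycle of reductions
\[
\CFI_{\Delta^0_2} \sured \wub{\D^2_2} \sured \dwub{\D^2_2} \sured \CFI_{\Delta^0_2},
\]
the middle reduction already being recorded above (replace the explicit witness $i$ by the constant function with value $i$). Two elementary observations make the two endpoints routine. First, for any stable coloring $c : [\omega]^2 \to 2$ and any $i < 2$, the color $i$ is a witness of thin-unbalancing for $c$ if and only if $\{x : \lim_y c(x,y) = i\}$ is finite: if that set were infinite it would be an infinite limit-homogeneous set of color $i$, which thins to an infinite homogeneous set of color $i$, and conversely any infinite homogeneous set of color $i$ is contained in that set by stability. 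Second, it follows at once that every $\CFI_{\Delta^0_2}$-instance $c$, for which by definition $\lim_y c(x,y) = 1$ for all but finitely many $x$, is a thin-unbalanced instance of $\D^2_2$ with $0$ as a witness; so $c \mapsto \seq{c,0}$, together with the identity on solutions, witnesses $\CFI_{\Delta^0_2} \sured \wub{\D^2_2}$.

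The substantive step is $\dwub{\D^2_2} \sured \CFI_{\Delta^0_2}$. Given an instance $\seq{c,\ell}$ of $\dwub{\D^2_2}$, write $i = \lim_y \ell(y)$ for the witness of thin-unbalancing carried by $\ell$, and define the coloring $d$ by $d(x,y) = c(x,y) \oplus \ell(y)$ for $x < y$. This $d$ is uniformly computable from $\seq{c,\ell}$; it is stable because $\lim_y d(x,y) = (\lim_y c(x,y)) \oplus i$ exists for each $x$; and by the first observation above $\lim_y c(x,y) = 1-i$ for all but finitely many $x$, so $\lim_y d(x,y) = (1-i) \oplus i = 1$ for all but finitely many $x$. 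Hence $d$ is a legitimate $\CFI_{\Delta^0_2}$-instance. If now $L$ is any infinite limit-homogeneous set for $d$, with color $j$, then by stability $\lim_y d(x,y) = j$, and hence $\lim_y c(x,y) = j \oplus i$, for every $x \in L$; thus $L$ is itself an infinite limit-homogeneous set for $c$, i.e.\ a $\dwub{\D^2_2}$-solution to $\seq{c,\ell}$, so the identity again serves as the backward functional.

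I expect the only real obstacle to be finding the right coloring $d$: the instance supplies $i$ only as $\lim_y \ell(y)$, not as an explicitly computed color, so one cannot branch on whether to leave $c$ unchanged (when $i = 0$) or to complement it (when $i = 1$). Taking the pointwise exclusive-or with $\ell(y)$ carries out this complementation ``in the limit'', covering both cases with a single uniform construction while not disturbing any set's property of being limit-homogeneous. Everything else — stability of $d$, the cofiniteness condition, and correctness of the solution transformations — is then a routine check, and composing the three reductions (each of which is in fact strong) yields $\CFI_{\Delta^0_2} \uequiv \wub{\D^2_2} \uequiv \dwub{\D^2_2}$.
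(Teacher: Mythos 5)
Your proof is correct and follows essentially the same route as the paper: the easy chain $\CFI_{\Delta^0_2} \ured \wub{\D^2_2} \ured \dwub{\D^2_2}$, and then for the substantive direction the coloring $d(x,y) = c(x,y) \oplus \ell(y)$, which is exactly the paper's $d(x,y) = 1$ iff $c(x,y) = 1 - \ell(y)$. Your explicit observation that, for stable colorings, $i$ witnesses thin-unbalancing iff $\{x : \lim_y c(x,y) = i\}$ is finite is a useful bit of bookkeeping that the paper leaves implicit.
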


\begin{proof}
	It is clear that $\CFI_{\Delta^0_2} \ured \wub{\D^2_2} \ured \dwub{\D^2_2}$. In the other direction, suppose we are given an instance $\seq{c,\ell}$ of $\dwub{\D^2_2}$. Define $d : [\omega]^2 \to 2$ by
	\[
		d(x,y) =
		\begin{cases}
			1 & \text{if } c(x,y) = 1 - \ell(y),\\
			0 & \text{otherwise.}
		\end{cases}
	\]
	Now for all $x$, we have that $\lim_y c(x,y) = 1 - \lim_y \ell(y)$ if and only if $\lim_y d(x,y) = 1$. In particular, since $\lim_y c(x,y) = 1 - \lim_y \ell(y)$ for almost all $x$, we have $\lim_y d(x,y) = 1$ for almost all $x$. Clearly, every limit-homogeneous set for $d$ is also limit-homogeneous for $c$.
\end{proof}

\noindent Notice that a similar proof shows that $\wub{\SRT^2_2} \uequiv \dwub{\SRT^2_2}$.

We now compare $\CFI_{\Delta^0_2}$ with the choice principle $\C_{\mathbb{N}}$ defined in Section \ref{s:intro}.

\begin{proposition}
	$\CFI_{\Delta^0_2} \comp \mathsf{C}_{\mathbb{N}} \uequiv \CFI_{\Delta^0_2}$.
\end{proposition}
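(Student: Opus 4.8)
The nontrivial direction is $\CFI_{\Delta^0_2} \comp \C_{\mathbb N} \ured \CFI_{\Delta^0_2}$; the reverse $\CFI_{\Delta^0_2} \ured \CFI_{\Delta^0_2} \comp \C_{\mathbb N}$ is immediate since $\C_{\mathbb N}$ has a computable instance and precomposing with a problem that has a computable instance only weakens. By definition of the compositional product, it suffices to show that whenever $\mathsf{Q}_0 \ured \C_{\mathbb N}$ and $\mathsf{Q}_1 \ured \CFI_{\Delta^0_2}$ and $\mathsf{Q}_1 \circ \mathsf{Q}_0$ is defined, we have $\mathsf{Q}_1 \circ \mathsf{Q}_0 \ured \CFI_{\Delta^0_2}$; composing reductions, it is in turn enough to handle $\CFI_{\Delta^0_2} \circ \C_{\mathbb N}$ directly, i.e.\ to show $\CFI_{\Delta^0_2} \circ \C_{\mathbb N} \ured \CFI_{\Delta^0_2}$ (the outer and inner reductions to the ``real'' problems are absorbed by the usual cylinder/functional bookkeeping for $\comp$). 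So the heart of the matter: given an instance $e : \omega^2 \to 2$ of $\C_{\mathbb N}$ (enumerating the complement $\overline A$ of an infinite — in fact merely nonempty, but here we may as well think of it as possibly cofinite — set $A \subseteq \omega$, with the promise that $A \neq \emptyset$), together with, for each potential $\C_{\mathbb N}$-solution $x \in A$, an instance $c_x : [\omega]^2 \to 2$ of $\CFI_{\Delta^0_2}$, build a single instance $d$ of $\CFI_{\Delta^0_2}$ such that any infinite subset of the cofinite set coded by $d$ uniformly yields both some $x \in A$ and an infinite subset of the cofinite set coded by $c_x$.

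\textbf{The construction.} The key point is that $\C_{\mathbb N}$ over a \emph{cofinite} set is computably trivial in the relevant sense, but we cannot assume cofiniteness of $A$; however, we can exploit that $\CFI_{\Delta^0_2}$ only ever asks us to find an infinite subset of a \emph{cofinite} set, and that $\Delta^0_2$ approximations let us ``wait''. Here is the plan for $d$. Using the $\C_{\mathbb N}$-enumeration of $\overline A$, maintain at each stage $s$ a current candidate $x_s = \min(\omega \setminus (\text{enumerated part of }\overline A \text{ by stage } s))$; this is a nonincreasing-in-value-at-each-coordinate, eventually-stable $\Delta^0_2$ process, and $\lim_s x_s = \min A =: a$ exists since $A \neq \emptyset$. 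Now define $d(x,y)$, for $x < y$, by running the approximation to $x_y$ and setting $d(x,y) = c_{x_y}(x,y)$ if $x$ is above the current candidate's ``block'' (some bookkeeping to make $x$ in range), and $d(x,y)=1$ otherwise — more precisely, arrange that $\lim_y d(x,y) = \lim_y c_a(x,y)$ for every $x$ that is not swallowed by the finitely many earlier candidates, and $\lim_y d(x,y) = 1$ for the rest. Since $c_a$ is a $\CFI_{\Delta^0_2}$-instance, $\lim_y c_a(x,y) = 1$ for almost all $x$, hence $\lim_y d(x,y) = 1$ for almost all $x$, so $d$ is a legitimate instance of $\CFI_{\Delta^0_2}$. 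Given an infinite set $H$ with $\lim_y d(x,y) = 1$ for all $x \in H$ (with finitely many possible initial exceptions, in the limit-homogeneous formulation), throw away the finitely many members of $H$ below the stabilization point of the candidate process and below the exceptional coordinates; the resulting infinite $H'$ is an infinite subset of the cofinite set coded by $c_a$, and moreover $a = \min A$ is computable from $H$ together with $e$ (wait until the candidate stabilizes, which $H$ being unbounded certifies via the values $x_y$ for $y \in H$). This yields uniformly the pair $(a, H')$, i.e.\ a $\C_{\mathbb N}$-solution and a $\CFI_{\Delta^0_2}(c_a)$-solution, which is exactly a solution to the $\CFI_{\Delta^0_2} \circ \C_{\mathbb N}$-instance.

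\textbf{Main obstacle.} The delicate point is the interleaving of the two $\Delta^0_2$ processes: the candidate $x_s$ for $\C_{\mathbb N}$ and the color approximations for the $c_x$'s, across \emph{all} candidates $x$ simultaneously, while ensuring the result is still a single coloring $d$ with $\lim_y d(x,y)$ genuinely existing for every $x$ and equal to $1$ for cofinitely many $x$. One must make sure that the finitely many ``wrong'' early candidates $x < a$ contribute only finitely many coordinates on which $d$ behaves like $c_x$ rather than $c_a$, so that they do not spoil the ``cofinitely many $1$'s'' property; this is handled by only ever letting the \emph{current} candidate govern $d(x,y)$ for $x$ in a fresh block and freezing earlier blocks to value $1$, so that each of the finitely many pre-$a$ candidates owns only a bounded interval of coordinates. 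The other routine-but-necessary check is that recovering $a$ from $H \oplus e$ is uniform — this follows because an infinite/unbounded $H$ lets us see the candidate value $x_y$ for arbitrarily large $y$, and $\lim_y x_y = a$, so reading off $x_y$ for the first $y \in H$ past a point where $e$ has stopped enumerating below $a$ gives $a$; but since we don't know that point, we instead observe that $x_y$ is itself $\Delta^0_2$ and eventually constant, and $H$ certifies co-boundedly many sample points, so $\lim$ is computed in the standard $\C_{\mathbb N}$ fashion. Modulo this bookkeeping, the reduction is uniform and the functionals are easily read off, giving $\CFI_{\Delta^0_2} \comp \C_{\mathbb N} \uequiv \CFI_{\Delta^0_2}$.
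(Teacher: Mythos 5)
Your overall strategy for the hard direction --- folding the finitely-many-mindchanges computation of $\C_{\mathbb{N}}$ (the candidate $x_s = \min(\omega \setminus \overline{A}[s])$) into the $\Delta^0_2$ approximation of a single $\CFI_{\Delta^0_2}$-instance $d$ that eventually mimics $c_a$ --- is exactly the idea behind the first half of the paper's proof, which shows $\CFI_{\Delta^0_2} \comp \C_{\mathbb{N}} \ured \CFI_{\Delta^0_2} \times \C_{\mathbb{N}}$. The genuine gap is at the point where you must also return the $\C_{\mathbb{N}}$-component of the answer. A reduction of the compositional product to $\CFI_{\Delta^0_2}$ must let the backward functional recover the intermediate $\C_{\mathbb{N}}$-solution as well as the final $\CFI_{\Delta^0_2}$-solution, and you assert that $a = \min A$ is computable from $e \oplus H$. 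With your construction this is not justified: because you freeze the blocks owned by dead candidates to limit value $1$, the set $\{x : \lim_y d(x,y) = 1\}$ contains all of those frozen coordinates, so a solution $H$ to $d$ may begin with (indeed contain many) coordinates owned by wrong candidates, and nothing in $H$ certifies where the candidate process stabilizes. Your fallback --- that $x_y$ is eventually constant and ``$H$ certifies co-boundedly many sample points, so $\lim$ is computed in the standard $\C_{\mathbb{N}}$ fashion'' --- is circular: computing the limit of an eventually constant (even monotone) sequence of naturals from its values is itself a $\C_{\mathbb{N}}$-complete task, which is precisely what you are trying to absorb into the single $\CFI_{\Delta^0_2}$ call.

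The gap is repairable in two ways. (i) Freeze the dead blocks to limit value $0$ rather than $1$: they occupy only finitely many coordinates in total, so $d$ remains a legal $\CFI_{\Delta^0_2}$-instance, but now every element of a solution $H$ lies in the final block, and $a = x_{\min H}$ really is computable from $e \oplus \min H$. (ii) Factor the argument as the paper does: first reduce $\CFI_{\Delta^0_2} \comp \C_{\mathbb{N}}$ to the parallel product $\CFI_{\Delta^0_2} \times \C_{\mathbb{N}}$ (your mindchange-folding argument), and then reduce $\CFI_{\Delta^0_2} \times \C_{\mathbb{N}}$ to $\CFI_{\Delta^0_2}$ by normalizing $\C_{\mathbb{N}}$-instances to sets of the form $\{n : n > k\}$ and intersecting with the $\Delta^0_2$ cofinite set; then any element of the resulting $\CFI_{\Delta^0_2}$-solution is automatically a $\C_{\mathbb{N}}$-solution, and no decoding of the stabilized candidate is needed.
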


\begin{proof}
   First we show that $\CFI_{\Delta^0_2} \star \C_{\mathbb{N}} \ured \CFI_{\Delta^0_2} \times \C_{\mathbb{N}}$. Note that $\mathsf{C}_{\mathbb{N}}$ is computable with finitely many mindchanges, and these mindchanges can be incorporated into the $\Delta^0_2$ instances of $\CFI_{\Delta^0_2}$. Thus, we can compute directly the impact $\C_{\mathbb{N}}$ has on the $\CFI_{\Delta^0_2}$-instance, and do not need to use them sequentially.

   Then we argue that $ \CFI_{\Delta^0_2} \times \C_{\mathbb{N}} \ured \CFI_{\Delta^0_2} $. We identify an instance $e$ of $\C_{\mathbb{N}}$ with the complement of the set enumerated by $e$, and an instance $c$ of $\CFI_{\Delta^0_2}$ with the corresponding $\Delta^{0,c}_2$ set.
   As shown in~\cite[Lemma 2.3]{paulyfouchedavie}, we may assume without loss of generality that the instances of $\C_{\mathbb{N}}$ are of the form $\{n \mid n > k\}$ for some $k \in \N$. Given instances of $\C_{\mathbb{N}}$ and of $\CFI_{\Delta^0_2}$, we can compute the intersection of these instances, and think of it as an instance of  $\CFI_{\Delta^0_2}$. Any infinite subset of this intersection is a solution to the original $\CFI_{\Delta^0_2}$ instance, and any element a solution to the $\C_{\mathbb{N}}$ instance.
\end{proof}

We can think of the instances of $\CFI_{\Delta^0_2}$ as being functions $p : \omega \to \omega$ such that $|\{i \in \naturalnumbers : p(i) = n + 1\}| < \infty$ for all $n$, and such that $|\{i \in \naturalnumbers : p(i) = n + 1\}|$ is even for cofinitely many $n$. Then, a solution is any infinite set $Y$ such that if $n \in Y$ then $|\{i \in \naturalnumbers : p(i) = n + 1\}|$ is even. It is easy to see that this formulation is Weihrauch equivalent to the one given in Definition \ref{def:cfi}. However, we shall find this version more convenient for our results below.

Given $p \in \omega^\omega$ as above, let $\psi(p) = \{ n : |\{i \in \naturalnumbers : p(i) = n + 1\}| \text{ is even}\}$. For each $p \in {\omega}^{<\omega} \cup \Baire$, let $[p] = \{n : \exists i \ p(i) = n+1\}$. For $\sigma \in \omega^{<\omega}$, let $\oldoverline{\sigma}$ be the length-lexicographically least extension of $\sigma$ such that$|\{i \in \naturalnumbers : \oldoverline{\sigma}(i) = n + 1\}|$ is even for all $n \in [\sigma]$.

\begin{definition}
	For $\sigma \in \omega^{<\omega}$, let $\CFI_{\Delta^0_2}^\sigma$ denote the restriction of $\CFI_{\Delta^0_2}$ to instances $\oldoverline{\sigma}p$ such that $[\sigma] \cap [p] = \emptyset$.
\end{definition}

\begin{proposition}\label{prop:restr}
	For every $\sigma \in \omega^{<\omega}$, we have $\CFI_{\Delta^0_2}^\sigma \uequiv \CFI_{\Delta^0_2}$.
\end{proposition}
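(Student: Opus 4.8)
The plan is to prove both reductions; the forward one, $\CFI_{\Delta^0_2}^\sigma \ured \CFI_{\Delta^0_2}$, is immediate — indeed it holds with $\sured$ via the identity functionals — since $\CFI_{\Delta^0_2}^\sigma$ is by definition nothing but the restriction of $\CFI_{\Delta^0_2}$ to a distinguished subclass of instances, with unchanged solution sets. All the content is in the converse, $\CFI_{\Delta^0_2} \ured \CFI_{\Delta^0_2}^\sigma$, which I would prove by relabeling the colors of an arbitrary instance so that they avoid $[\sigma]$, and then prepending $\oldoverline{\sigma}$. I work throughout in the function formulation of $\CFI_{\Delta^0_2}$ introduced just before the definition of $\CFI_{\Delta^0_2}^\sigma$.

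Fix $\sigma$, so that $[\sigma]$ is a fixed finite subset of $\omega$, and let $h : \omega \to \omega \setminus [\sigma]$ be the increasing enumeration of $\omega \setminus [\sigma]$, which is a computable bijection. Given an arbitrary instance $p \in \omega^\omega$ of $\CFI_{\Delta^0_2}$, let $q \in \omega^\omega$ be the result of relabeling each color $m$ used by $p$ with the color $h(m)$; concretely, $q(i) = h(p(i)-1)+1$ when $p(i) > 0$ and $q(i) = 0$ otherwise. Then $|\{i : q(i) = h(m)+1\}| = |\{i : p(i) = m+1\}|$ for every $m$, while $|\{i : q(i) = n+1\}| = 0$ for every $n \in [\sigma]$. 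Hence $q$ is again a valid $\CFI_{\Delta^0_2}$-instance (all fibers finite, all but finitely many even), $[q] \cap [\sigma] = \emptyset$, and $\psi(q) = [\sigma] \cup h[\psi(p)]$. The $\CFI_{\Delta^0_2}^\sigma$-instance produced by the reduction is the concatenation $\oldoverline{\sigma}q$, which is computable from $p$; since $\oldoverline{\sigma}$ uses only colors in $[\sigma]$ and gives each of them even multiplicity, prepending $\oldoverline{\sigma}$ does not change which colors outside $[\sigma]$ have even total multiplicity, so $\psi(\oldoverline{\sigma}q) = [\sigma] \cup h[\psi(p)]$ as well, and $\oldoverline{\sigma}q$ has exactly the syntactic shape demanded by the definition of $\CFI_{\Delta^0_2}^\sigma$.

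Finally, to pull back solutions: given any solution $Y$ to $\oldoverline{\sigma}q$, i.e.\ any infinite $Y \subseteq \psi(\oldoverline{\sigma}q) = [\sigma] \cup h[\psi(p)]$, the set $Z = \{m : h(m) \in Y\}$ is infinite — because $Y \setminus [\sigma]$ is infinite and contained in the range of $h$ — and $Z \subseteq \psi(p)$, so $Z$ is a $\CFI_{\Delta^0_2}$-solution to $p$; moreover $Z$ is computed from $Y$ using only the fixed string $\sigma$, so the reduction is in fact strong. The only points that need attention are the routine verifications already used above: that the relabeling keeps all fibers finite and all but finitely many even, that $q$ genuinely avoids the colors in $[\sigma]$, and that prepending $\oldoverline{\sigma}$ perturbs $\psi$ only within $[\sigma]$. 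There is no real combinatorial obstacle here; the slightly fiddly part is just matching the exact bookkeeping conventions of the function formulation.
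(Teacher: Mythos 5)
Your proof is correct and takes essentially the same approach as the paper's, which likewise relabels the colors of an arbitrary instance via a computable bijection $h$ onto a set disjoint from $[\sigma]$, prepends $\oldoverline{\sigma}$, and pulls solutions back through $h^{-1}$. You merely spell out the bookkeeping (the $+1$ offset in the coding and the treatment of the value $0$) that the paper's one-line proof leaves implicit.
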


\begin{proof}
	Let $h : \naturalnumbers \to
	\{x \in \omega : x > \max [\sigma]\}$ be a computable bijection.
	Let $\overline{h} : \Baire \to \Baire$ be defined pointwise via $\overline{h}(p)(n) = h(p(n))$. Then $\CFI_{\Delta^0_2}(p) = h^{-1} \circ \CFI_{\Delta^0_2}^\sigma(\oldoverline{\sigma}\overline{h}(p))$.
\end{proof}

We can now prove that $\CFI_{\Delta^0_2}$ has properties very similar to being a \emph{total fractal} (see Brattka, Gherardi, and Pauly~\cite[Section 4]{BGP-TA}; see also Theorem 7.15 in that paper). In the context of Weihrauch degrees, a fractal may be thought of as a problem that retains its full power on arbitrarily small (clopen) restrictions of its domain. Since $\CFI_{\Delta^0_2}$ is defined on $\Delta^0_2$-approximations, it is clear that it is a fractal.

\begin{proposition}\label{prop:cbabsorption}
	Let $\mathsf{P}$ be any problem. If $\CFI_{\Delta^0_2} \ured \mathsf{P} \comp \C_{\mathbb{N}}$, then $\CFI_{\Delta^0_2} \ured \mathsf{P}$.
\end{proposition}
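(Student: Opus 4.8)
The plan is to unfold the hypothesis and then inline the inner copy of $\C_{\mathbb{N}}$. First I would rewrite $\CFI_{\Delta^0_2} \ured \mathsf{P} \comp \C_{\mathbb{N}}$ in the usual three-functional form for a reduction into a compositional product. Since $\C_{\mathbb{N}}$-instances may be taken in the normal form that enumerates an initial segment $\{0,\dots,k\}$ with the numbers $>k$ as solutions (the normalization from \cite[Lemma 2.3]{paulyfouchedavie} used above), the hypothesis amounts to having computable functionals that, from a $\CFI_{\Delta^0_2}$-instance $p$, produce an enumeration of some $\{0,\dots,k_p\}$, then for each $x>k_p$ a $\mathsf{P}$-instance $Z^{p,x}$, and from each $\mathsf{P}$-solution $W$ to $Z^{p,x}$ (with $x>k_p$) a $\CFI_{\Delta^0_2}$-solution to $p$. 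The task is then to replace this by a reduction that calls $\mathsf{P}$ once, with an instance computed from $p$ alone.

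The tools I would bring to bear are the self-similarity of $\CFI_{\Delta^0_2}$ from Proposition \ref{prop:restr} (every restriction $\CFI_{\Delta^0_2}^\sigma$ is Weihrauch equivalent to $\CFI_{\Delta^0_2}$, uniformly in $\sigma$), the fact that $\C_{\mathbb{N}} \ured \CFI_{\Delta^0_2}$ (in normal form, turn the enumeration of $\{0,\dots,k\}$ into a $\Delta^0_2$ approximation of the cofinite set $\{n : n>k\}$, whose infinite subsets have minimum $>k$), and the observation, underlying the previous proposition, that $\C_{\mathbb{N}}$ is absorbed by $\CFI_{\Delta^0_2}$ precisely because it is computable with finitely many mind changes and those mind changes can be folded into the $\Delta^0_2$ data of a $\CFI_{\Delta^0_2}$-instance. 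Concretely, I would feed $p$ together with the enumeration of $\{0,\dots,k_p\}$ into a single auxiliary $\CFI_{\Delta^0_2}$-instance $q$, built using the $\CFI_{\Delta^0_2}^\sigma$ machinery so that there is room to run, on disjoint blocks of $\omega$, one attempt for each candidate value of $k_p$, in such a way that any $\CFI_{\Delta^0_2}$-solution to $q$ simultaneously pins down an upper bound for $k_p$ and yields a $\CFI_{\Delta^0_2}$-solution to $p$. Applying the unfolded reduction to $q$, and observing that the $\C_{\mathbb{N}}$-call it generates for $q$ is now answerable from $q$ itself because that information was baked into $q$, then produces the single $\mathsf{P}$-instance we need; the back-translation composes the given $\Lambda$ with the decoding built into $q$.

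The crux, and what I expect to be the main obstacle, is the mismatch between the single invocation of $\mathsf{P}$ that a reduction $\ured \mathsf{P}$ permits and the genuinely nontrivial, ``known only in the limit'' character of the inner $\C_{\mathbb{N}}$-call: one cannot simply guess the $\C_{\mathbb{N}}$-answer, run $Z^{p,x}$, and start over with a fresh $\mathsf{P}$-instance when the guess is revised. The only way through is to move all of the backtracking to the $\CFI_{\Delta^0_2}$-side, where finitely many false starts cost nothing, since discarding a finite initial segment of a $\CFI_{\Delta^0_2}$-solution leaves a $\CFI_{\Delta^0_2}$-solution and $\CFI_{\Delta^0_2}$ is unchanged under restriction to a clopen piece (Proposition \ref{prop:restr}), so that by the time $\mathsf{P}$ is actually called nothing remains to revise. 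Getting the combinatorial bookkeeping right, and in particular verifying that the auxiliary instance $q$ is a bona fide $\CFI_{\Delta^0_2}$-instance (each color value occurring finitely often, and evenly often for cofinitely many of them), is where the real work lies.
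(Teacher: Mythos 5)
Your instinct that the $\C_{\mathbb{N}}$-call must be neutralized \emph{before} $\mathsf{P}$ is invoked, and that the self-similarity of $\CFI_{\Delta^0_2}$ (Proposition \ref{prop:restr}) is the tool for absorbing finitely many false starts, is correct; but the mechanism you propose is circular. You bake the answer to the $\C_{\mathbb{N}}$-instance generated from the \emph{original} instance $p$ into an auxiliary instance $q$, and then apply the given reduction to $q$. The reduction applied to $q$ generates a \emph{new} $\C_{\mathbb{N}}$-instance, computed by an arbitrary functional from $q$, and there is no reason its answer should be ``answerable from $q$ itself'': the information you encoded concerns the call generated from $p$, not from $q$. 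Moreover, information folded into a $\CFI_{\Delta^0_2}$-instance in the $\Delta^0_2$ way is recoverable only from a \emph{solution} to that instance, not from the instance itself; the forward functional must produce the single $\mathsf{P}$-instance from $q$ alone, before any solution exists, and that $\mathsf{P}$-instance depends on the $\C_{\mathbb{N}}$-answer. So by the time $\mathsf{P}$ is called, the revision is not in fact finished. Your absorption step $\CFI_{\Delta^0_2} \times \C_{\mathbb{N}} \ured \CFI_{\Delta^0_2}$ is true (it is the preceding proposition), but it answers a $\C_{\mathbb{N}}$-call \emph{after} the $\CFI_{\Delta^0_2}$-call, which is the wrong order here: the $\C_{\mathbb{N}}$-call sits inside the compositional product, upstream of $\mathsf{P}$.

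The argument that works proceeds by a dichotomy rather than by answering the call. Writing $\Phi$ for the functional taking $\CFI_{\Delta^0_2}$-instances to $\C_{\mathbb{N}}$-instances, recursively search for finite blocks $\sigma_0,\sigma_1,\dots$ with pairwise disjoint supports such that reading $\oldoverline{\sigma}_0\cdots\oldoverline{\sigma}_k$ already forces $\Phi$ to enumerate $k$ out of its $\C_{\mathbb{N}}$-instance. If the search fails at stage $k$, then on the clopen set of instances extending $\oldoverline{\sigma}_0\cdots\oldoverline{\sigma}_{k-1}\oldoverline{\sigma}$ (for a suitable further block $\sigma$) the answer to the $\C_{\mathbb{N}}$-call can be hardcoded as $k$, so the restricted problem reduces to $\mathsf{P}$ alone, and Proposition \ref{prop:restr} transfers this to all of $\CFI_{\Delta^0_2}$. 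If the search never fails, the concatenation of all the blocks is a legitimate $\CFI_{\Delta^0_2}$-instance (each number occurs finitely and evenly often) whose image under $\Phi$ enumerates every natural number, hence is not a $\C_{\mathbb{N}}$-instance at all, contradicting the hypothesis. Your block bookkeeping is essentially what the ``search never fails'' branch needs, but the ``answerable because baked in'' step must be replaced by hardcoding on a clopen piece.
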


\begin{proof}
	Let $\Phi$ map instances of $\CFI_{\Delta^0_2}$ to instances of $\mathsf{C}_{\mathbb{N}}$.
	If there is no string $\sigma \in \omega^{<\omega}$ such that $\Phi^{\oldoverline{\sigma}}(0,s)\converges = 1$ for some $s$, then $0$ is always a valid answer to the $\mathsf{C}_{\mathbb{N}}$-instances used in the reduction, and the $\mathsf{C}_{\mathbb{N}}$-call is useless. So suppose otherwise, and let $\sigma_0$ be such a string.
	
	Assume now that we have defined $\sigma_0,\ldots,\sigma_{k-1} \in \omega^{<\omega}$, and choose the least $n \notin \bigcup_{i < k} [\sigma_i]$. Suppose there is no string $\sigma$ with $n \in [\sigma]$ and $[\sigma] \cap [\sigma_i] = \emptyset$ for all $i < k$, and such that $\Phi^{\oldoverline{\sigma}_0 \ldots \oldoverline{\sigma}_{k-1} \oldoverline{\sigma}}(k,s) \converges = 1$ for some $s$. Then choose any $\sigma$ with $n \in [\sigma]$ and $[\sigma] \cap [\sigma_i] = \emptyset$ for all $i < k$. Now $\CFI_{\Delta^0_2}^{\sigma_0 \cdots \sigma_{k-1}\sigma} \ured \mathsf{P}$ because we can replace the output of $\mathsf{C}_{\mathbb{N}}$ by $k$. By Proposition \ref{prop:restr}, this fact implies the claim. So suppose otherwise, and let $\sigma_k$ be a string with the desired properties.
	
	If this procedure never stops, then we construct some $p = \oldoverline{\sigma}_0\oldoverline{\sigma}_1\oldoverline{\sigma}_2\ldots$. By induction, all the $[\sigma_i]$ are mutually disjoint, and every $n \in \naturalnumbers$ appears some even number of times in some $[\sigma_i]$, so certainly $p$ is an instance of $\CFI_{\Delta^0_2}$. However, by construction we also find that for each $k$ there is an $s$ such that $\Phi^p(k,s)=1$, so $\Phi^p$ is not an instance of $\mathsf{C}_{\mathbb{N}}$, which is a contradiction.
\end{proof}

This proposition allows us to deduce a number of non-reduction facts about $\CFI_{\Delta^0_2}$, which point to its strength. We begin with the following. Neumann and Pauly~\cite{NP-2018} introduced the \emph{sorting principle}, $\mathsf{Sort}$, whose instances are all elements of $2^\omega$, such that the instance $p \in 2^\omega$ has the unique solution  $0^n1^\omega$ if $p$ contains exactly $n$ many $0$'s, and $0^\omega$ if $p$ contains infinitely many $0$'s. We refer to~\cite[Definition 1.2]{BGP-TA} for the definitions of the $k$-fold product and the star operation, $*$.

\begin{theorem}
	$\CFI_{\Delta^0_2} \nured \Sort^*$.
\end{theorem}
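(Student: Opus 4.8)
The plan is to show that no pair of Turing functionals can witness $\CFI_{\Delta^0_2} \ured \Sort^*$, using the fractal/absorption machinery just developed. First I would recall that $\Sort^*$ is (up to Weihrauch equivalence) a problem of the form $\C_{\mathbb N} \comp (\text{finitely many copies of } \Sort)$, or more precisely that $\Sort^* \ured \C_{\mathbb N} \comp \Sort^{[k]}$ for the relevant $k$; combining this with Proposition~\ref{prop:cbabsorption}, it would suffice to show that $\CFI_{\Delta^0_2} \nured \Sort^{[k]}$ for every fixed $k$, i.e.\ that $\CFI_{\Delta^0_2}$ is not Weihrauch reducible to any finite product of copies of $\Sort$. (If the cleanest route is instead to absorb the $\C_{\mathbb N}$ directly, one uses that $\Sort^*$ already has $\C_{\mathbb N}$-like behavior built in and reduces to ruling out $\CFI_{\Delta^0_2} \ured \Sort^{[k]}$.) The key structural point about $\Sort$ is that a $\Sort$-instance $p\in 2^\omega$ has a \emph{computable} solution in a strong sense: the solution $0^n1^\omega$ or $0^\omega$ is determined, and its first $m$ bits are settled as soon as we have seen $m$ many $0$'s or enough of $p$; crucially, once the environment has committed to a solution prefix, that commitment is \emph{monotone} and can only be "corrected" by seeing more $0$'s, of which there are only finitely many. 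So a reduction to $\Sort^{[k]}$ effectively gives, from any $\CFI_{\Delta^0_2}$-instance, a solution that is computed with only finitely many "mind-changes" of a very restricted kind — whereas a generic $\CFI_{\Delta^0_2}$-instance (built by the same kind of forcing/diagonalization as in Proposition~\ref{prop:cbabsorption}) defeats any such finite-mind-change strategy.

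The main construction would mirror the proof of Proposition~\ref{prop:cbabsorption}. Fix putative functionals $\Phi$ (producing the $\Sort^{[k]}$-instance, i.e.\ $k$ binary sequences) and $\Psi$ (producing the $\CFI_{\Delta^0_2}$-solution from the $\Sort^{[k]}$-solution). I would build a $\CFI_{\Delta^0_2}$-instance $p = \oldoverline{\sigma}_0\oldoverline{\sigma}_1\cdots$ in stages, using the restriction principle $\CFI_{\Delta^0_2}^\sigma \uequiv \CFI_{\Delta^0_2}$ (Proposition~\ref{prop:restr}) at each stage: having fixed a prefix $\oldoverline{\sigma}_0\cdots\oldoverline{\sigma}_{k-1}$, I ask whether there is an admissible extension $\sigma$ (disjoint support, containing the least unused $n$) along which $\Phi$ commits one of its $k$ $\Sort$-components to a new $0$ (equivalently, extends a committed solution prefix past where $\Psi$ has used it). If there is no such extension, then from that point on the $j$-th $\Sort$-component's solution is "finalized" on the relevant initial segment, so one of the $k$ components is effectively removed from play on the restriction $\CFI_{\Delta^0_2}^{\sigma_0\cdots\sigma_{k-1}\sigma}$, and we may replace $\Sort^{[k]}$ by $\Sort^{[k-1]}$ there and induct on $k$ (the base case $k=0$ being that $\CFI_{\Delta^0_2}$ is noncomputable, which it is since it has no computable solution function — an instance can encode an arbitrary $\Delta^0_2$ cofinite set). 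If such an extension always exists, I take it as $\sigma_k$ and continue; then along $p$ each of the $k$ $\Sort$-components receives infinitely many $0$'s, so its $\Sort$-solution is $0^\omega$; but each commitment to a new $0$ corresponded to $\Psi$ revising (or being unable to converge correctly on) the $\CFI_{\Delta^0_2}$-output, so $\Psi$ never stabilizes to a correct infinite solution of $p$ — contradiction, since $p$ is a genuine $\CFI_{\Delta^0_2}$-instance.

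The hard part, I expect, is pinning down the right invariant to push the induction on $k$ through cleanly: one must make precise the sense in which "a $\Sort$-component receiving no further $0$'s along any admissible extension" genuinely lets us delete a coordinate, and in particular that $\Psi$'s use of that component's solution is bounded so that finalizing a long-enough prefix of it suffices. This requires interleaving, for each coordinate $j < k$, a bound on how much of the $j$-th solution $\Psi$ reads before committing each bit of its $\CFI_{\Delta^0_2}$-output — essentially a compactness/use argument of the same flavor as the $u$-bounds in the proof of Theorem~\ref{T:NONLPORT2}. A secondary subtlety is the reduction of $\Sort^*$ to the $\C_{\mathbb N} \comp \Sort^{[k]}$ form (or whatever normal form makes Proposition~\ref{prop:cbabsorption} applicable); this should follow from the definitions of the star operation and finite products in~\cite[Definition 1.2]{BGP-TA}, together with the observation that the "choose which of finitely many copies" is exactly a $\C_{\text{finite}} \ured \C_{\mathbb N}$ call, but it needs to be stated carefully. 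Once those two points are handled, the diagonalization itself is a routine adaptation of the stage construction in Proposition~\ref{prop:cbabsorption}.
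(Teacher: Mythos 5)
Your scaffolding matches the paper's: pass to a fixed $k$ with $\CFI_{\Delta^0_2} \ured \Sort^k$ (this follows from continuity of the forward functional plus Proposition \ref{prop:restr}, not from any $\C_{\mathbb{N}}$-absorption --- the number of copies in a $\Sort^*$-instance is part of the instance, so it is determined by a finite prefix of the input; your proposed normal form $\Sort^* \ured \C_{\mathbb{N}} \comp \Sort^k$ for a single $k$ does not make sense), then induct on $k$. Your organization of the inductive step is genuinely different from the paper's: you split on whether some coordinate is frozen on an entire restriction (in which case you hard-code its solution and drop to $\Sort^{k-1}$ with no choice principle needed), whereas the paper shows that for each \emph{individual} instance $p$ some coordinate of $\Phi^p$ has finitely many zeros, uses $\C_{\mathbb{N}}$ to locate a pair $\langle d,n\rangle$ past which that coordinate is zero-free, and then invokes Proposition \ref{prop:cbabsorption} to absorb the $\C_{\mathbb{N}}$-call. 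Your case split, if completed, would be a legitimate and arguably more elementary route.

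The genuine gap is in your remaining case, where every coordinate can always be forced to receive a new $0$ and a generic $p$ makes every $\Sort$-solution equal to $0^\omega$. Your stated reason for a contradiction --- that ``$\Psi$ never stabilizes'' because each new $0$ forces a revision --- is not an argument: $\Psi$ is a fixed functional applied to the fixed input $\langle p, \langle 0^\omega,\dots,0^\omega\rangle\rangle$, and nothing about $\Phi$ emitting more zeros prevents it from correctly enumerating an infinite subset of the cofinite set coded by $p$. The missing engine is a commitment lemma about $\Psi$, not about $\Phi$: if on some finite prefix $\tau$ of the instance every coordinate of $\Phi^\tau$ already shows at least $n$ zeros, and $\Psi$ applied to $\tau$ together with the prefixes $\langle 0^n,\dots,0^n\rangle$ (which are correct initial segments of the true solutions no matter how $\tau$ is extended) enumerates some $l$ into its output, then one can extend $\tau$ to a valid $\CFI_{\Delta^0_2}$-instance excluding $l$ from the cofinite set, so $\Psi$ is wrong. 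Hence no such commitment ever occurs, and on your generic $p$ the computation $\Psi^{\langle p, \langle 0^\omega,\dots,0^\omega\rangle\rangle}$ enumerates nothing (or only finitely much), which is the actual contradiction. Without this lemma --- which is the heart of the paper's proof --- your diagonalization has nothing to diagonalize against; the use-bound considerations you flag as the ``hard part'' are a side issue by comparison, since an all-zeros prefix remains consistent with the true solution under every extension.
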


\begin{proof}
	Assume that $\CFI_{\Delta^0_2} \ured \Sort^*$. Then there is a Turing functional mapping instances of $\CFI_{\Delta^0_2}$ to instances of $\Sort^*$, and hence there are a $\sigma$ and a $k$ such that $\CFI^\sigma_{\Delta^0_2} \ured \Sort^k$. Choose $k$ minimal for which there is such a $\sigma$. By Proposition \ref{prop:restr} we also have $\CFI_{\Delta^0_2} \ured \Sort^k$.
	Let $\Phi$ and $\Psi$ witness the reduction.

        Suppose there is a $\tau$ such that $\Phi^\tau$ outputs at least $n$ many $0$'s for each input to $\Sort$ and the output of $\Psi$ on input $\langle \tau, \langle 0^n, \ldots, 0^n\rangle\rangle$ contains some $l \in \mathbb N$. Then there is some $p$ such that $l \notin \psi(\tau p)$, which is a contradiction. Thus for each $p$, there is some $d \leq k$ such that the $d$th input to $\Sort$ given by $\Phi^p$ has finitely many $0$'s. The set of pairs $\langle d,n \rangle$ such that the $d$th input has some $0$ in a position greater than $n$ is c.e.\ in $p$, so from $p$ we can obtain an instance of $\mathsf{C}_{\mathbb N}$ whose solutions are pairs $\langle d,n \rangle$ such that the $d$th input has no $0$'s at positions greater than $n$. It follows that $\CFI_{\Delta^0_2} \ured \Sort^{k-1} \comp \mathsf{C}_{\mathbb{N}}$. By Proposition \ref{prop:cbabsorption}, this in turn implies $\CFI_{\Delta^0_2} \ured \Sort^{k-1}$, contradicting the minimality of $k$.
\end{proof}

In the next proposition, $\K_{\mathbb{N}}$ denotes the choice problem for \emph{compact} subsets of $\mathbb{N}$ (see~\cite{BGP-TA}). We refer the reader to~\cite[Section 6]{BGP-TA} for the definition of the jump operator, $'$, on Weihrauch degrees. Definitions of the countable coproduct $\coprod$ and the problems $\mathsf{C}_{\{0,\ldots,n\}}$ used in the proof below can also be found in that paper, in Sections 4 and 7, respectively.

\begin{proposition}
	$\CFI_{\Delta^0_2} \ured \mathsf{C}'_{\mathbb{N}}$ but  $\CFI_{\Delta^0_2} \nured \K'_{\mathbb{N}}$.
\end{proposition}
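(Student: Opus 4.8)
The plan is to treat the two halves separately, working with the reformulation of $\CFI_{\Delta^0_2}$-instances as functions $p : \omega \to \omega$ introduced above (so that the ``bad set'' $B_p = \{n : |\{i : p(i) = n+1\}| \text{ is odd}\}$ is finite and has a canonical $\Delta^0_2$ approximation), and using Proposition~\ref{prop:cbabsorption} together with the standard facts $\C_{\mathbb N} \uequiv \lim_{\mathbb N}$ (where $\lim_{\mathbb N}$ is the limit map on convergent sequences of naturals) and $\K_{\mathbb N} \uequiv \coprod_n \C_{\{0,\ldots,n\}}$, for which we refer to~\cite{BGP-TA}.

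For $\CFI_{\Delta^0_2} \ured \C'_{\mathbb N}$, the observation is that to solve a $\CFI_{\Delta^0_2}$-instance $p$ it suffices to produce \emph{any} upper bound $k$ for the finite set $B_p$, since then $\{n : n \geq k\}$ is a solution. Such a bound is exactly the kind of object the jump $\C'_{\mathbb N}$ supplies: from $p$ one computes the stage-$s$ parities $g_s(n) = |\{i < s : p(i) = n+1\}| \bmod 2$ and uses them to build a $\lim$-name of the closed set of valid bounds $\{k : g(n) = 0 \text{ for all } n \geq k\}$, presented via an enumeration of its complement --- a slot indexed by a pair $(n,k)$ with $k \leq n$ enumerates $k$ exactly when the current parity $g_s(n)$ equals $1$, and enumerates nothing when it equals $0$. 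Each slot stabilizes because only finitely many $n$ have $g(n) = 1$, so this is a legitimate $\C'_{\mathbb N}$-instance, and its answer set is precisely the nonempty (indeed cofinite) set of valid bounds. I expect this direction to be routine once the representation of $\C'_{\mathbb N}$ is pinned down.

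For $\CFI_{\Delta^0_2} \nured \K'_{\mathbb N}$, the idea is that $\K_{\mathbb N}$ is a coproduct of \emph{bounded} choice problems, so its jump does not outrun $\C_{\mathbb N} \comp \C_{\mathbb N}$. First, since $\K_{\mathbb N} \uequiv \coprod_n \C_{\{0,\ldots,n\}}$ and the ``tag'' $n$ of an instance of the jump is the limit of a convergent sequence of naturals, one application of $\C_{\mathbb N} \uequiv \lim_{\mathbb N}$ recovers it; thus $\K'_{\mathbb N} \ured \bigl(\coprod_n (\C_{\{0,\ldots,n\}})'\bigr) \comp \C_{\mathbb N}$, where after obtaining the tag one reinterprets the remaining data as an instance of $(\C_{\{0,\ldots,n\}})'$. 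Second, an instance of $(\C_{\{0,\ldots,n\}})'$ amounts to a convergent sequence of ``exclusion vectors'' in the finite set $\{0,1\}^{n+1}$, so another application of $\lim_{\mathbb N} \uequiv \C_{\mathbb N}$ computes the limiting vector, from which a non-excluded coordinate (one exists, since the limit set is nonempty) is read off directly; this reduction is uniform in $n$, so $\coprod_n (\C_{\{0,\ldots,n\}})' \ured \coprod_n \C_{\mathbb N} \uequiv \C_{\mathbb N}$. Combining, $\K'_{\mathbb N} \ured \C_{\mathbb N} \comp \C_{\mathbb N}$. Now if $\CFI_{\Delta^0_2} \ured \K'_{\mathbb N}$ then $\CFI_{\Delta^0_2} \ured \C_{\mathbb N} \comp \C_{\mathbb N}$, so Proposition~\ref{prop:cbabsorption} (with $\mathsf P = \C_{\mathbb N}$) gives $\CFI_{\Delta^0_2} \ured \C_{\mathbb N}$, and since $\C_{\mathbb N} \ured \id \comp \C_{\mathbb N}$, a second application (with $\mathsf P = \id$) forces $\CFI_{\Delta^0_2}$ to be computable --- contradicting $\LPO \ured \wub{\D^2_2} \uequiv \CFI_{\Delta^0_2}$ established earlier. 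The main obstacle will be the representation-theoretic bookkeeping in the chain $\K'_{\mathbb N} \ured \C_{\mathbb N} \comp \C_{\mathbb N}$: one must check that $\K_{\mathbb N} \uequiv \coprod_n \C_{\{0,\ldots,n\}}$ holds for the standard bound-plus-negative-information representation of compact subsets of $\mathbb N$, that the jump respects this identification, and that once $\C_{\mathbb N}$ has returned the stabilized bound one may legitimately rewrite the earlier, possibly differently bounded, approximations as a $(\C_{\{0,\ldots,n\}})'$-instance with the correct limit. None of this is deep, but it is where care is needed.
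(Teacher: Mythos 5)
Your first half is fine and is essentially the paper's argument: both use $\C'_{\mathbb{N}}$ to extract, in the limit, a finite piece of information that pins down the bad set (the paper recovers the exact set of $k$ with $\lim_y d(k,y)=0$ as a coded finite string; you recover merely an upper bound for it, which suffices and is if anything a little cleaner). The approximations you describe do stabilize slot by slot, so they form a legitimate limit-name of a $\C_{\mathbb{N}}$-instance with nonempty solution set.

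The second half has a genuine gap: the claimed reduction $(\C_{\{0,\ldots,n\}})' \ured \C_{\mathbb{N}}$ (uniformly in $n$) is false, and with it the chain $\K'_{\mathbb{N}} \ured \C_{\mathbb{N}} \comp \C_{\mathbb{N}}$ collapses. The error is in treating an instance of $(\C_{\{0,\ldots,n\}})'$ as ``a convergent sequence of exclusion vectors in $\{0,1\}^{n+1}$.'' The represented space of closed subsets of $\{0,\ldots,n\}$ carries the negative-information representation (enumerations of complements), which is not discrete, so its jump is not the space of convergent sequences in a finite discrete set. Concretely, from a sequence of enumerations converging to an enumeration $q$ of the complement, the question of whether a fixed $j$ is \emph{eventually} enumerated by $q$ does not stabilize along the given approximations: the intermediate enumerations may spuriously list $j$ at ever-later positions infinitely often, so your ``exclusion vectors'' need not converge, and no single further application of $\limprob_{\mathbb{N}}$ recovers the true complement. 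Indeed the paper itself records that $\RTc^1_k \suequiv \C_k'$, and $\RTc^1_2 \nured \C_{\mathbb{N}}$ (any $\C_{\mathbb{N}}$-procedure is a finite-mind-change procedure, and one diagonalizes against such a procedure for ``which color occurs infinitely often'' by alternately extending the coloring with long blocks of $0$'s and $1$'s). Since $\C_2 \sured \K_{\mathbb{N}}$ and the jump is monotone, $\C_2' \ured \K'_{\mathbb{N}}$; and since $\C_{\mathbb{N}}$ is closed under compositional product, your claim would give $\K'_{\mathbb{N}} \ured \C_{\mathbb{N}}$, hence $\C_2' \ured \C_{\mathbb{N}}$ --- a contradiction. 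So the statement you are trying to route the proof through is itself false.

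The correct route, which is the paper's, stops one step earlier: $\K'_{\mathbb{N}} \ured \bigl(\coprod_{n} \C'_{\{0,\ldots,n\}}\bigr) \comp \C_{\mathbb{N}}$, and only the inner $\C_{\mathbb{N}}$ is stripped off using Proposition \ref{prop:cbabsorption}, leaving $\CFI_{\Delta^0_2} \ured \coprod_n \C'_{\{0,\ldots,n\}}$. One then uses the fact that $\CFI_{\Delta^0_2}$ is a fractal to conclude that it would reduce to a single component $\C'_{\{0,\ldots,k\}}$, and finishes by a cardinality/diagonalization argument: with only $k+1$ possible advice values, one can build an instance $p$ defeating each of the finitely many candidate solutions $\Psi^{p\oplus i}$ in turn. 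Your closing move (two applications of Proposition \ref{prop:cbabsorption} to force computability, contradicting $\LPO \ured \CFI_{\Delta^0_2}$) is internally sound but never gets off the ground, because its hypothesis $\CFI_{\Delta^0_2} \ured \C_{\mathbb{N}} \comp \C_{\mathbb{N}}$ is unavailable.
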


\begin{proof}
  For the reduction, fix some enumeration $(\sigma_i)_{i \in \omega}$ of $\omega^{<\omega}$. Given some input $d$ to $\CFI_{\Delta^0_2}$ we define a sequence $(e_n)_{n \in \omega}$ with $e_n : \omega^2 \to 2$ by $e_n(x,s) = 0$ if{}f for all $y < n$ we have that $d(y,s) = 0$ if{}f $y$ occurs in $\sigma_x$. The sequence $(e_n)_{n \in \omega}$ converges to some $e : \omega^2 \to 2$ with the property that $e(x,s) = 0$ for all $s \in \omega$ precisely when $\sigma_x$ lists exactly those $k$ with $\lim_y d(k,y) = 0$. Clearly, from such a finite tuple we can compute an infinite subset of its complement.

	For the non-reduction, note that $\K'_{\mathbb N} \ured \left (\coprod_{n \in \naturalnumbers} \mathsf{C}_{\{0,\ldots,n\}}' \right ) \comp \mathsf{C}_{\mathbb{N}}$, so if $\CFI_{\Delta^0_2} \ured \K'_{\mathbb N}$, then by Proposition \ref{prop:cbabsorption}, we have $\CFI_{\Delta^0_2} \ured \left (\coprod_{n \in \naturalnumbers} \mathsf{C}_{\{0,\ldots,n\}}' \right )$. As $\CFI_{\Delta^0_2}$ is a fractal (as discussed above), then there is some $k \in \naturalnumbers$ with $\CFI_{\Delta^0_2} \ured \mathsf{C}_{\{0,\ldots,k\}}'$. But this is impossible for reasons of cardinality.
\end{proof}


The \emph{connected choice problem} of the next theorem was introduced by Brattka, Le Roux, Miller, and Pauly~\cite{BRMP-TA}. The instances of $\CC_1$ are nonempty closed subintervals of the real unit interval (see~\cite{BRMP-TA} for details on how the elements of the collection $\mathcal{A}(\uint)$ of such subintervals are represented), and the solutions to any such instance are the points inside it.

\begin{theorem}
	$\CC_1 \nured \CFI_{\Delta^0_2}$.
\end{theorem}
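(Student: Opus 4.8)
The plan is to show that any purported Weihrauch reduction of $\CC_1$ to $\CFI_{\Delta^0_2}$ can be defeated by an adversary argument, exploiting the fact that $\CFI_{\Delta^0_2}$-instances are $\Delta^0_2$-approximations whose ``true'' content (the cofinite set of colors that stabilize to $1$) is revealed only in the limit, while a $\CC_1$-instance can shrink an interval continuously and force the backward functional to commit to a point before enough of the $\CFI_{\Delta^0_2}$-solution has been seen. Concretely, suppose $\Phi, \Psi$ witness $\CC_1 \ured \CFI_{\Delta^0_2}$. Using the fractal property of $\CFI_{\Delta^0_2}$ (Proposition~\ref{prop:restr}), I would first reduce to a normalized situation: after reading a finite prefix of the $\CC_1$-instance (a nested sequence of rational intervals), $\Phi$ has committed a finite prefix $\oldoverline{\sigma}$ of a $\CFI_{\Delta^0_2}$-instance, and we may restrict attention to $\CFI_{\Delta^0_2}^\sigma$. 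The key point is that the solution space of $\CC_1$ is \emph{connected}, whereas we will arrange the $\CFI_{\Delta^0_2}$-side so that the only available information forces $\Psi$ to output points in a disconnected (in fact, two-point or Cantor-like) subset of $[0,1]$.

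The main construction: I would build the $\CC_1$-instance as a decreasing sequence of intervals, at each stage watching where $\Psi$ (fed the current $\CC_1$-prefix together with hypothetical $\CFI_{\Delta^0_2}$-solutions consistent with the current $\Phi$-output prefix) is being forced to place its output point. The adversary for $\CFI_{\Delta^0_2}$ retains the freedom, after any finite prefix $\oldoverline{\sigma}$, to continue the instance in a way that realizes essentially any prescribed cofinite ``limit-$1$'' set disjoint from $[\sigma]$, and correspondingly the legitimate solutions form a rich family; but crucially, a solution need only be an \emph{infinite} subset of that cofinite set, so the first few elements of the solution can be withheld or varied. I would use this to run a two-branch diagonalization: split the current interval into a left piece $I_0$ and a right piece $I_1$, and argue that for at least one choice $b \in \{0,1\}$, the reduction machinery, when the $\CC_1$-instance eventually narrows into $I_b$, can still be supplied with a valid $\CFI_{\Delta^0_2}$-instance and solution making $\Psi$ output a point outside $I_b$ — the obstruction being that $\Psi$ must commit to its point using only finitely much of the $\CFI_{\Delta^0_2}$-solution, which (because solutions are merely infinite, not cofinite, subsets) does not yet pin down which $I_b$ we are heading into. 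Iterating, we drive the $\CC_1$-instance to a single real $x$ while ensuring $\Psi$'s output stays bounded away from $x$, contradicting that $\Psi$ must produce a $\CC_1$-solution.

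The step I expect to be the genuine obstacle is the \textbf{two-branch diagonalization lemma}: making precise that at each stage we can \emph{simultaneously} (i) keep the $\CFI_{\Delta^0_2}$-instance extendible to a legitimate total instance along whichever branch we later pick, (ii) have a legitimate infinite solution along that branch on which $\Psi$'s current committed output is already wrong, and (iii) do this for at least one of the two branches so that the nested-interval construction can continue. This is where one must play off the \emph{connectedness} of $\CC_1$-solutions against the structure of $\CFI_{\Delta^0_2}$: $\Psi$ cannot output an interval-valued or ``ambiguous'' answer, it must name a single real, and the finite use of that computation is blind to the branching still available on the $\CFI_{\Delta^0_2}$-side. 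Once this lemma is in hand, assembling the contradiction by a straightforward nested-interval/compactness argument is routine, and the normalization via Proposition~\ref{prop:restr} handles the bookkeeping of the $\Phi$-side commitments.
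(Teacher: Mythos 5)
Your overall setup---an adversary that builds the $\CC_1$-instance as a shrinking sequence of intervals and exploits the fact that $\Psi$ must commit to an approximation of its output point after reading only a finite prefix of the instance and a finite set $B$ from the purported $\CFI_{\Delta^0_2}$-solution---is the same as the paper's. But there is a genuine gap in your endgame, and it is exactly the step you flag as the ``two-branch diagonalization lemma.'' Your plan is to conclude by exhibiting, for the final $\CC_1$-instance, a \emph{legitimate} $\CFI_{\Delta^0_2}$-solution on which $\Psi$'s output is bounded away from the true point. The adversary cannot arrange this: it controls only the $\CC_1$-instance $q$, not the $\CFI_{\Delta^0_2}$-instance, which is $\Phi^q$. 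The finite sets $B_n$ on which $\Psi$ was ``tricked'' at intermediate stages need not be contained in the limit set of the final instance $\Phi^q$, i.e., need not extend to actual solutions of $\Phi^q$; and on non-solutions $\Psi$ owes nothing, so no contradiction follows. Condition (ii) of your lemma---``a legitimate infinite solution along that branch on which $\Psi$'s current committed output is already wrong''---is precisely what cannot be guaranteed, and the two-branch splitting and the appeal to connectedness do not repair this.

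The missing idea is to move the contradiction from the solution side to the \emph{instance} side. Once $\Psi$ outputs a $2^{-2}$-approximation of some $x_0$ upon reading $(\sigma_0,B_0)$, shrink the interval so that every admissible continuation names a set disjoint from the ball around $x_0$. This does not show that $\Psi$ is wrong on a real solution; rather it \emph{forces} $B_0 \not\subseteq \psi(\Phi^q)$ for every continuation $q$, since otherwise some genuine solution through $B_0$ would make $\Psi$ output an impossible point. Iterating (choosing each new finite set $B_{n+1}$ from a solution whose elements all exceed $\max B_n$, so the $B_n$ are pairwise disjoint) produces a valid $\CC_1$-input $q$ such that $\psi(\Phi^q)$ fails to contain infinitely many pairwise disjoint finite sets, hence is not cofinite. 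So $\Phi^q \notin \dom(\CFI_{\Delta^0_2})$, contradicting that $\Phi$ is a forward functional. No branching, and no claim about $\Psi$'s behaviour on the final instance, is needed. Without this reorientation your construction does not close.
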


\begin{proof}
	Assume that $\CC_1 \ured \CFI_{\Delta^0_2}$ via $\Phi$ and $\Psi$. Let $p_0 \in \omega^\omega$ be a name for $\uint \in \mathcal{A}(\uint)$. There have to be some finite set $B_0 \subset \naturalnumbers$ and a prefix $\sigma_0$ of $p_0$ such that upon reading $\sigma_0$ and $B_0$, the functional $\Psi$ outputs a $2^{-2}$-approximation of some $x_0 \in \uint$. We can find some $\tau_0p_1$ such that $\sigma_0\tau_0p_1$ is a name for some interval $I_1$ with $|I_1| \geq 2^{-2}$ and such that for any $q$ extending $\sigma_0\tau_0$ and representing some $A \in \mathcal{A}(\uint)$, we have that $A \cap B(x_0,2^{-2}) = \emptyset$ (where $B(x_0,2^{-2})$ is the ball of radius $2^{-2}$ around $x_0$). It follows that for any $q$ extending $\sigma_0\tau_0$, the set $\psi(\Phi^q)$ must not contain $B_0$, for if it did, there would be a solution to $\Phi^q$ containing $B_0$ that would trick $\Psi$ into outputting a $2^{-2}$-approximation of $x_0$, which cannot be correct.
	
	In the next step, $\Psi$ has to output some $2^{-4}$-approximation of some $x_1$ upon reading some prefix $\sigma_0 \tau_0 \sigma_1$ of $\sigma_0 \tau_0 p_1$ and a finite set $B_1$ with $\max B_1 > \min B_0$. We pick $\tau_1$ to exclude $B(x_1,2^{-4})$ from the solution set, and thus conclude that for any $q$ extending $\sigma_0 \tau_0 \sigma_1 \tau_1$, the set $\psi(\Phi^q)$ must not contain $B_1$ (nor $B_0$).
	
	By iterating the procedure, we obtain some input $\sigma_0 \tau_0 \sigma_1 \tau_1 \sigma_2 \tau_2 \cdots \in \omega^\omega$, which is in the domain of $\CC_1$ (as this has a total domain if represented in a suitable way), but such that $\psi(\Phi^{\sigma_0 \tau_0 \sigma_1 \tau_1 \sigma_2 \tau_2 \cdots})$ excludes countably many disjoint finite sets $B_0, B_1, \ldots$. Hence, $\psi(\Phi^{\sigma_0 \tau_0 \sigma_1 \tau_1 \sigma_2 \tau_2 \cdots}) \notin \dom(\CFI_{\Delta^0_2})$, and we have derived a contradiction.
\end{proof}

Brattka, H{\"o}lzl, and Kuyper~\cite[Proposition 16]{hoelzl2} showed that $\CC_1 \ured \Sort$, so it follows that $\Sort \nured \CFI_{\Delta^0_2}$. An alternate proof of this fact can be given by using the following technical notion.

\begin{definition}
	Suppose $G : \mathop{\subseteq} \mathbf{X} \rightrightarrows \mathbf{Z}$ is a partial multifunction of represented spaces. Then $G$ is \emph{low for functions} if, for every $f : \mathbf{Y} \to \Baire$ that satisfies $f \ured \limprob \comp G$, we have $f \ured \limprob$.
\end{definition}

\begin{proposition}\label{prop:lowness}
	Let $G : \mathbf{X} \mto \mathcal{O}(\naturalnumbers)$ (where $\mathcal{O}(\naturalnumbers)$ consists of the subsets of $\omega$ represented by enumerations of their elements) be such that \[(\forall x \in \mathbf{X})(\exists k_0 \in \naturalnumbers)(\forall k \geq k_0)[\{n : n \geq k\} \in G(x)].\]
	Then $G$ is low for functions.
\end{proposition}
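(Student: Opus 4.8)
The plan is to prove directly that $f \ured \limprob$, by exhibiting a uniform procedure that, from an $f$-instance $y$, outputs a convergent sequence whose limit computes $f(y)$. Since $\limprob$ is a cylinder, $f \ured \limprob \comp G$ gives Turing functionals $\Phi,\Psi$ such that for every $y \in \dom(f)$ the object $\Phi^y$ is a $G$-instance, and for every name $W$ of an element of $G(\Phi^y) \subseteq \mathcal{O}(\naturalnumbers)$ the object $\Psi^{y\oplus W}$ is a name of a convergent sequence in $\Baire$ whose limit, together with $y\oplus W$, computes $f(y)$.

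First I would pin down the role of the hypothesis. For $k \in \naturalnumbers$ let $W_k$ be the canonical name of the tail $\{n : n \ge k\} \in \mathcal{O}(\naturalnumbers)$; this is computable uniformly in $k$. Applying the hypothesis to $x := \Phi^y$, there is some $k_0$ (depending on $x$) with $W_k$ a legitimate $G$-solution for every $k \ge k_0$, so $\lim_s \Psi^{y\oplus W_k}_s$ computes $f(y)$ for every $k \ge k_0$; and since $f$ is single-valued, it computes \emph{the same} $f(y)$ for all these $k$. Thus the indices for which $W_k$ might fail to be a $G$-solution (and for which $\Psi^{y\oplus W_k}$ therefore carries no guarantee) lie in the finite initial segment $\{0,\dots,k_0-1\}$, and $f(y)$ is the double limit $\lim_k\lim_s \Psi^{y\oplus W_k}_s$ in which the outer limit over $k$ is eventually constant. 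In particular $f(y) \leq\sub{T} y'$ for each $y$ --- by taking $W = W_{k_0}$, which is computable --- but this is \emph{non-uniform}, and making it uniform is exactly the problem.

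The technical heart, and the main obstacle, is to collapse this double limit into a single $y$-computably presented convergent sequence, i.e.\ to define the $\limprob$-instance attached to $y$. One cannot simply search for a good index $k \ge k_0$: $k_0$ is not computable from $y$, membership ``$W_k \in G(\Phi^y)$'' cannot be tested, and the moduli of convergence of the sequences $\Psi^{y\oplus W_k}$ cannot be computably bounded --- so the obvious maneuvers (diagonalising jointly in $k$ and $s$, taking a majority vote among the $W_k$, or stabilising on a ``confirmed'' index) need not converge. The plan is instead to run all the computations $\Psi^{y\oplus W_k}_s$ in a dovetailed fashion and to define the terms of the $\limprob$-instance by a self-correcting rule that exploits that only finitely many of the $W_k$ are bad --- maintaining a tentative value together with a cautiously advancing ``trusted index'' and using the fact that all good tails agree on $f(y)$ to argue that the tentative values settle. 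Carrying this out so that the resulting sequence is simultaneously $y$-computable and convergent, with limit computing $f(y)$, is the step I expect to demand the most care; it is also where both hypotheses are genuinely used --- single-valuedness of $f$, to make the good tails interchangeable, and the fact that $G$ takes values in $\mathcal{O}(\naturalnumbers)$, so that the $W_k$ may be chosen to approach the name of $\emptyset$ as $k \to \infty$.
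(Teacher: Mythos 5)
Your setup is the same as the paper's: reduce to the case $f(y) = \lim_i \Psi_i(y,W)$ for every name $W$ of an element of $G(\Phi(y))$, observe that the canonical names $W_k$ of the tails $\{n : n \geq k\}$ are uniformly computable and are valid for all $k \geq k_0$, and use single-valuedness of $f$ to make all good tails interchangeable. But your write-up stops exactly where the proof has to happen: you never actually define the $\limprob$-instance attached to $y$, and you say yourself that carrying out the ``self-correcting rule with a cautiously advancing trusted index'' is the step that will demand the most care. That step is the entire content of the proposition, so as it stands this is a gap, not a proof. For comparison, the paper's argument is precisely the maneuver you set aside: it first invokes the transparency of $\limprob$ (Brattka--Gherardi--Marcone, Fact 5.5) to absorb the backward functional into the limit, so that $f(x)$ is \emph{exactly} $\lim_i \Psi_i(x,W)$ for any valid $W$, and then takes the diagonal $\lim_{k\to\infty}\Psi_k(x,\naturalnumbers_{\geq k})$, arguing that since $f$ is a function it does not matter whether one chooses the element of $G(\Phi(x))$ once or separately for each index, with the finitely many undefined initial terms repaired by a standard patch.

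Concretely, the missing idea in your version is this: your proposed rule must eventually ignore every index $k < k_0$, yet a tail $\{n : n \geq k\}$ with $k < k_0$ need not lie in $G(\Phi(y))$, so $\Psi(y,W_k)$ carries no guarantee at all --- in particular it can output a perfectly well-formed convergent sequence whose limit is wrong, and no finite amount of observation can ever certify an index as trustworthy or disqualify it. A marker that stabilizes therefore risks stabilizing on such a bad index, while a marker that advances unboundedly reintroduces exactly the diagonal convergence issue you flagged (for each fixed good $k$ the sequence $\Psi_i(y,W_k)$ converges, but with no modulus uniform in $k$). So the ``self-correcting rule'' needs a genuinely new mechanism that you have not supplied; you must either adopt the paper's diagonal-substitution route (and justify why the diagonal sequence converges to $f(y)$ for a suitable choice of the names $W_k$), or spell out an alternative stabilization argument in full detail --- merely naming the strategy does not establish the proposition.
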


\begin{proof}
Let $f : \mathbf{Y} \to \Baire$ be such that $f \ured \limprob \comp G$.
	Without loss of generality, we may assume that $\mathbf{X}, \mathbf{Y} \subseteq \Baire$. As $\limprob$ is transparent (see Brattka, Gherardi, and Marcone~\cite[Fact 5.5]{BGM-2012}), we can obtain $f(x) = \lim_{i \to \infty} \Psi_i(x,G(\Phi(x)))$ for some functionals $\Phi$ and $\Psi_i$. Let $\naturalnumbers_{\geq k} = \{n \in \naturalnumbers : n \geq k\} \in \mathcal{O}(\naturalnumbers)$. Now for any $x \in \mathbf{X}$, we have that $\Psi_k(x, \naturalnumbers_{\geq k})$ is defined and is an element of $\Psi_k(x,G(\Phi(x)))$ for almost all $k$. As $f$ is a function, in $\lim_{i \to \infty} \Psi_i(x,G(\Phi(x)))$ it does not matter whether we choose from $G(\Phi(x))$ once for the entire expression, or separately for each $i$. Thus, we can compute $f(x)$ as $\lim_{k \to \infty} \Psi_k(x, \naturalnumbers_{\geq k})$. (While finitely many of these values may be undefined, this problem can be resolved with a standard argument.)
\end{proof}

\begin{lemma}\label{obs:lowness}
	Let $G$ be low for functions, and $f : \mathbf{X} \to \Baire$ with $f \ured G$. Then $\limprob \comp f \ured \limprob$.
\end{lemma}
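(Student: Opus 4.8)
The statement to prove is Lemma~\ref{obs:lowness}: if $G$ is low for functions and $f \ured G$ for some $f : \mathbf{X} \to \Baire$, then $\limprob \comp f \ured \limprob$.

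\medskip

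The plan is to exploit the definition of ``low for functions'' directly, applied to the function $\limprob \comp f$ in place of the generic $f$ appearing in that definition. First I would observe that $\limprob \comp f$ is (Weihrauch equivalent to) a single-valued function: $f$ is a function by hypothesis, and $\limprob$ has a unique solution for each instance, so the composition, suitably represented, is a function from a subspace of $\Baire$ (the names of valid instances) to $\Baire$. Call this function $h$. So it suffices to show $h \ured \limprob$, and for that, by the definition of $G$ being low for functions, it is enough to show $h \ured \limprob \comp G$.

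\medskip

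The second and main step is to establish $\limprob \comp f \ured \limprob \comp G$. Here I would use monotonicity of the compositional product together with the hypothesis $f \ured G$. Since $f \ured G$, we have $\limprob \comp f \ured \limprob \comp G$ by the fact that $\comp$ respects Weihrauch reducibility in each argument (this follows from the definition $\mathsf{P}_1 \comp \mathsf{P}_0 = \max_{\ured}\{\mathsf{Q}_1 \circ \mathsf{Q}_0 : \mathsf{Q}_i \ured \mathsf{P}_i\}$, since any $\mathsf{Q} \ured f$ also satisfies $\mathsf{Q} \ured G$). Combining: $h = \limprob \comp f \ured \limprob \comp G$, so by lowness of $G$ we get $h \ured \limprob$, which is exactly $\limprob \comp f \ured \limprob$.

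\medskip

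The one point that needs a little care — and which I expect to be the main (minor) obstacle — is the bookkeeping around treating $\limprob \comp f$ as literally a function $\mathbf{Y} \to \Baire$ so that the definition of ``low for functions'' applies verbatim. Strictly, $\limprob \comp f$ is defined only up to Weihrauch degree, but since both $f$ and $\limprob$ are single-valued, any representative of the degree is Weihrauch equivalent to an honest partial function with codomain $\Baire$ (indeed $\limprob \circ f$ works, with domain the names of $f$-instances whose image lies in $\dom(\limprob)$); one then notes that ``$h \ured \limprob$'' is degree-invariant, so choosing this representative loses nothing. With this identification in hand, the two reductions above combine immediately and the proof is complete. I would write this up in two or three sentences without belaboring the represented-space formalities, since the paper has already signalled (in Section~\ref{s:intro}) that it treats such coding distinctions lightly.
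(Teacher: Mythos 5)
Your proof is correct and is essentially identical to the paper's: both observe that $\limprob \comp f$ is single-valued, use monotonicity of $\comp$ to get $\limprob \comp f \ured \limprob \comp G$ from $f \ured G$, and then apply the definition of ``low for functions'' to the function $\limprob \comp f$. The extra bookkeeping you flag about representatives of the degree is fine but not needed beyond the level of detail the paper itself gives.
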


\begin{proof}
As $f$ is a function, so is $\limprob \comp f$. Moreover, $f \ured G$ implies $\limprob \comp f \ured \limprob \comp G$, so $\limprob \comp f \ured \limprob$.
\end{proof}

\begin{corollary}
$\Sort \nured \CFI_{\Delta^0_2}$.
\end{corollary}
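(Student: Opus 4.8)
The plan is to derive this as an immediate consequence of the machinery just developed, namely Proposition~\ref{prop:lowness} and Lemma~\ref{obs:lowness}, together with a known fact about $\Sort$. First I would invoke the result of Brattka, H\"olzl, and Kuyper cited just above, that $\CC_1 \ured \Sort$; if instead $\Sort \ured \CFI_{\Delta^0_2}$ held, transitivity would give $\CC_1 \ured \CFI_{\Delta^0_2}$, contradicting the preceding theorem. So strictly speaking the corollary is already proved. But the point of stating it here is to give the \emph{alternate} proof via the lowness notion, so I would instead argue directly as follows.

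The key observation is that $\CFI_{\Delta^0_2}$, in the reformulation as a multifunction into $\mathcal{O}(\naturalnumbers)$ discussed before Proposition~\ref{prop:restr} (instances being functions $p$ whose solution set consists of infinite subsets of the cofinite set $\psi(p)$), satisfies exactly the hypothesis of Proposition~\ref{prop:lowness}: every instance has, among its solutions, all sufficiently long final segments $\{n : n \geq k\}$ of $\omega$, because $\psi(p)$ is cofinite. Hence $\CFI_{\Delta^0_2}$ is low for functions. Now suppose toward a contradiction that $\Sort \ured \CFI_{\Delta^0_2}$. Since $\Sort$ is (equivalent to) a function $2^\omega \to 2^\omega$ — each instance has a unique solution — Lemma~\ref{obs:lowness} applies with $f = \Sort$ and $G = \CFI_{\Delta^0_2}$, yielding $\limprob \comp \Sort \ured \limprob$. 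But it is known (and is the reason $\Sort$ was introduced by Neumann and Pauly) that $\limprob \comp \Sort \nured \limprob$; indeed $\Sort$ is strictly above $\LPO$ in a way that is not absorbed by $\limprob$. This contradiction completes the proof.

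The main obstacle is pinning down the precise input to Lemma~\ref{obs:lowness}: one must make sure that $\Sort$ really is a \emph{function} in the relevant represented-space sense (which it is, since the solution to each instance is unique), and that $\CFI_{\Delta^0_2}$ as used here is literally the multifunction $G : \mathbf{X} \mto \mathcal{O}(\naturalnumbers)$ to which Proposition~\ref{prop:lowness} applies — this is exactly the reformulation set up in the paragraph preceding Proposition~\ref{prop:restr}, so no new work is needed. The only genuinely external input is the fact $\limprob \comp \Sort \nured \limprob$ from the work of Neumann and Pauly, which plays the role of ``witness of strength'' for $\Sort$; everything else is a two-line chain of the lemmas above.

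\begin{proof}
	By Proposition~\ref{prop:lowness}, applied to $\CFI_{\Delta^0_2}$ viewed as a multifunction into $\mathcal{O}(\naturalnumbers)$ (as described before Proposition~\ref{prop:restr}, whose instances $p$ have every sufficiently long final segment $\{n : n \geq k\}$ among their solutions), $\CFI_{\Delta^0_2}$ is low for functions. Now suppose toward a contradiction that $\Sort \ured \CFI_{\Delta^0_2}$. Since $\Sort$ has a unique solution for each instance, it is a function, so Lemma~\ref{obs:lowness} gives $\limprob \comp \Sort \ured \limprob$. This contradicts the fact (from Neumann and Pauly~\cite{NP-2018}) that $\limprob \comp \Sort \nured \limprob$.
\end{proof}
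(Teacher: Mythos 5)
Your proof is correct and follows essentially the same route as the paper: apply Proposition~\ref{prop:lowness} to see that $\CFI_{\Delta^0_2}$ is low for functions, use Lemma~\ref{obs:lowness} with the single-valued problem $\Sort$, and conclude from $\limprob \comp \Sort \nured \limprob$. The only quibble is the attribution of that last fact: the paper derives it directly (via $\lpo' \ured \limprob \comp \Sort$ but $\lpo' \nured \limprob$) and cites Brattka, H\"olzl, and Kuyper rather than Neumann and Pauly, who are credited only with the related result that $\limprob \comp \limprob \comp \Sort \ured \limprob \comp \limprob$.
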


\begin{proof}
By Proposition \ref{prop:lowness}, $\CFI_{\Delta^0_2}$ is low for functions. As $\Sort$ is a function, Lemma \ref{obs:lowness} shows that if we had $\Sort \ured \CFI_{\Delta^0_2}$ we would also have $\limprob \comp \Sort \ured \limprob$. However, it is not difficult to check that $\lpo' \ured \lpo \comp \Sort \ured \limprob \comp \Sort$, but $\lpo' \nured \limprob$. (That $\limprob \comp \Sort \nured \limprob$ also follows from~\cite[Proposition 21]{hoelzl2}.) To see that this non-reduction holds, first note that there is a uniformly computable sequence $S_0,
S_1, \ldots$ of instances of $\lpo'$ such that for each $e$, the $e$th
Turing functional $\Phi_e$ is total if and only if $S_e = 0^\omega$. Thus,
for each $e$, to determine whether $S_e$ has solution $0$ is $\Pi^0_2$-hard.
On the other hand, every computable $\limprob$-instance has a uniformly
$\Delta^0_2$ solution.
\end{proof}

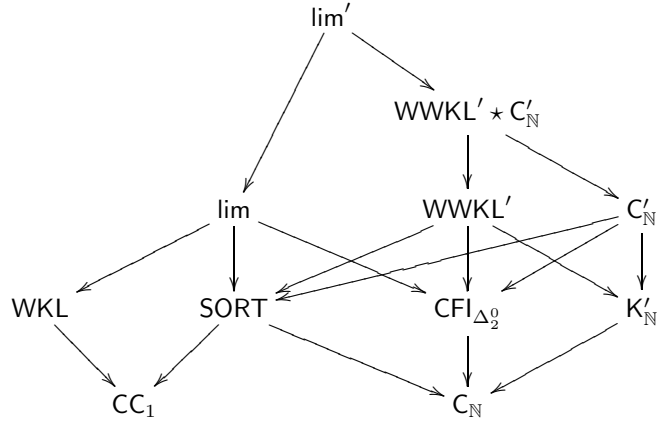
\begin{figure}[t]
\[
	\xymatrix@C=1em@R=2em{
& & & \limprob'  \ar@{->}[dr] \ar@{->}[ddl]\\
& & & & \WWKL' \comp \mathsf{C}_{\mathbb{N}}' \ar@{->}[d] \ar@{->}[drr] \\
& & \limprob \ar@{->}[d] \ar@{->}[dll] \ar@{->}[drr] & &  \WWKL' \ar@{->}[d] \ar@{->}[dll] \ar@{->}[drr] & & \mathsf{C}_{\mathbb{N}}' \ar@{->}[d] \ar@{->}[dll] \ar@{->}[dllll]\\
\WKL \ar@{->}[dr] & &  \mathsf{SORT}  \ar@{->}[dl] \ar@{->}[drr] & & \CFI_{\Delta^0_2} \ar@{->}[d] & & \mathsf{K}_{\mathbb{N}}' \ar@{->}[dll]\\
& \mathsf{CC}_1 & & &  \mathsf{C}_{\mathbb{N}}
}
\]
	\caption[]{Location of $\CFI_{\Delta^0_2}$ in the Weihrauch lattice. An arrow from $\mathsf{P}$ to $\mathsf{Q}$ represents the reduction $\mathsf{Q} \ured \mathsf{P}$. No additional arrows can be added other than those that follow by transitivity. For (non-)reductions not explicitly mentioned above, see \cite[Sections 4 and 5 and Figure 5]{hoelzl2}.}\label{F:CFI}
\end{figure}

Note that while the proof above shows that $\limprob \comp \Sort \nured
\limprob$, it was shown by Neumann and Pauly~\cite[Corollary 32]{NP-2018}
that $\limprob \comp \limprob \comp \Sort \ured \limprob \comp \limprob$.

We conclude with one final reduction. Recall that $\WWKL$ is the problem whose instances are closed subsets of $2^\omega$ of positive measure, with solutions being the members of the given set. We refer the reader to Downey and Hirschfeldt~\cite[Chapter 6]{DH-2010} for background on Martin-L\"{o}f randomness. 

\begin{proposition}
	$\CFI_{\Delta^0_2} \ured \WWKL'$.
\end{proposition}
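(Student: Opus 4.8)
The plan is to realise an arbitrary $\CFI_{\Delta^0_2}$-instance directly as an instance of $\WWKL'$, using that a $\WWKL'$-instance is, up to the jump representation, a $\Delta^0_2$ closed subset of $2^\omega$ of positive measure, with every member a solution. So fix a $\CFI_{\Delta^0_2}$-instance $c$ and put $S = \{x : \lim_y c(x,y) = 1\}$. Since $c$ is a stable coloring with $\lim_y c(x,y) = 1$ for all but finitely many $x$, the set $S$ is cofinite, $\overline{S} := \omega \setminus S$ is finite, and---crucially, because $c$ is stable---$S$ is $c'$-computable; moreover, unwinding Definition~\ref{def:cfi}, the $\CFI_{\Delta^0_2}$-solutions to $c$ are exactly the infinite subsets of $S$. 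Now fix once and for all the partition of $\omega$ into the consecutive intervals $I_0, I_1, \dots$ with $|I_n| = n+1$, and let
\[
	A = \{X \in 2^\omega : X \subseteq S \text{ and } X \cap I_n \neq \emptyset \text{ whenever } I_n \subseteq S\}.
\]

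Next I would verify three things. First, $A$ is a $\Pi^0_1$ class relative to $c'$: the condition $X \subseteq S$ is $\Pi^0_1(c')$ in $X$ (equivalently, since $\overline{S}$ is finite, it amounts to a clopen condition), while for each fixed $n$ both ``$I_n \subseteq S$'' and ``$X \cap I_n \neq \emptyset$'' are $c'$-decidable from finitely much of $X$; hence $A$ can be presented, uniformly computably from $c$, as a $\WWKL'$-instance (this is the point that needs care; see below). Second, $A$ is nonempty and in fact $\mu(A) > 0$: one has $\mu(\{X : X \subseteq S\}) = 2^{-|\overline{S}|}$, and conditioned on $X \subseteq S$ the events $X \cap I_n \neq \emptyset$, ranging over all $n$ with $I_n \subseteq S$ (of which there are cofinitely many, since $\overline{S}$ is finite and $\min I_n \to \infty$), are mutually independent with probability $1 - 2^{-(n+1)}$ each, so $\mu(A) \geq 2^{-|\overline{S}|}\prod_{n \geq 0}(1 - 2^{-(n+1)}) > 0$ because $\sum_n 2^{-(n+1)} < \infty$. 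Third, every $X \in A$ is an infinite subset of $S$, hence a $\CFI_{\Delta^0_2}$-solution to $c$: it is a subset of $S$ by definition, and it is infinite because $\{n : I_n \subseteq S\}$ is infinite and the $I_n$ are pairwise disjoint. Granting all this, the map sending $c$ to a $\WWKL'$-name of $A$, together with the identity map on solutions, witnesses $\CFI_{\Delta^0_2} \ured \WWKL'$.

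The one genuine obstacle is producing, \emph{computably from $c$} (with no $\emptyset'$-oracle), an honest $\WWKL'$-name of $A$---that is, a computable sequence of closed sets $A_s$ with $A_s \to A$ under $\lim$ in $\mathcal{A}(2^\omega)$. Here one takes $A_s$ to be the defining formula for $A$ with $S$ replaced by the $c$-computable stage-$s$ approximation $S_s$ (so the intervals $I_n$ stay fixed but the index set $\{n : I_n \subseteq S_s\}$ varies with $s$). Because $S_s \to S$ coordinatewise and $\overline{S}$ is finite, for every basic clopen ball $[\tau]$ the relation ``$[\tau]$ is disjoint from $A_s$'' depends only on finitely much information that stabilises, so---using the standard device of padding the enumeration of complementary balls---the names of the $A_s$ converge pointwise to a name of $A$. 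This is routine represented-spaces bookkeeping; with it, and the three observations above, the reduction goes through.
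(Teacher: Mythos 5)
Your proof is correct, but it takes a genuinely different route from the paper's. The paper fixes once and for all a $\Pi^{0,\emptyset'}_1$ class $\mathcal{C}_0$ of positive measure consisting entirely of $2$-randoms, and given $c$ it intersects $\mathcal{C}_0$ with $\{X : X \subseteq S\}$ (where $S = \{x : \lim_y c(x,y) = 1\}$); the members of the resulting $\Pi^{0,c'}_1$ class are automatically infinite because $2$-random reals are coinfinite in neither direction, and positive measure is witnessed by shifting a $2$-random into the class. You instead build an explicit positive-measure $\Pi^{0,c'}_1$ class from scratch, forcing infiniteness combinatorially via the interval-hitting conditions $X \cap I_n \neq \emptyset$ and computing the measure directly as $2^{-|\overline{S}|}\prod_n(1-2^{-(n+1)}) > 0$. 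Your argument is more elementary and self-contained (no appeal to algorithmic randomness, and the measure bound is an honest convergent product rather than a fact about $2$-randoms), at the cost of being slightly longer; it also has the virtue of making the positive-measure claim completely explicit, where the paper's justification is terser. Your attention to the uniformity of the $\WWKL'$-name (producing a computable sequence $A_s \to A$ from the stage-$s$ approximations $S_s$) is appropriate and in fact more careful than the paper, which simply asserts that $\mathcal{C}$ "may be regarded as an instance of $\WWKL'$"; the one point worth stating explicitly in your write-up is that a $\CFI_{\Delta^0_2}$-solution is by definition an infinite limit-homogeneous set, and since $\{x : \lim_y c(x,y) = 0\}$ is finite these are exactly the infinite subsets of $S$, which you do note correctly.
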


\begin{proof}
	Let $\mathcal{C}_0$ be a fixed $\Pi^{0,\emptyset'}_1$ class all of whose members are $2$-random. Given an instance $c : [\omega]^2 \to 2$ of $\CFI_{\Delta^0_2}$, let $\mathcal{C}$ consist of all $X \in \mathcal{C}_0$ such that for all $i$, if $X(i) = 1$ then $\lim_s c(i,s) = 1$. Then $\mathcal{C}$ is a $\Pi^{0,c'}_1$ subclass of $\mathcal{C}_0$, and it still has positive measure since, e.g., if $X$ is any $2$-random real and $n$ is least such that $c(i,s) = 1$ for all $i \geq n$, then $\mathcal{C}$ contains the $2$-random real $0^{n-1}X(n)X(n+1) \cdots$. Thus, $\mathcal{C}$ may be regarded as an instance of $\WWKL'$, and if $X$ is any element of $\mathcal{C}$ then $\{i : X(i) = 1\}$ is infinite and is therefore a $\CFI_{\Delta^0_2}$-solution to $c$.
	\end{proof}

We summarize the results of this section in Figure \ref{F:CFI}.

\section{Ramsey's theorem for singletons}

In this section, we investigate Ramsey's theorem for singletons and different numbers of colors, and how these problems behave under Weihrauch reducibility with respect to products. A motivating toy example is the fact that $\RT^1_2 \times \RT^1_2 \ured \RT^1_4$, and in fact, it is easy to see that for all $n \geq 1$ and $k_0,\dots,k_n \geq 2$,
\[ \prod_{m=0}^n \RT^1_{k_m} \sured \RT^1_{\prod_{m=0}^n k_m}. \]
We show below that the right-hand side is optimal. Our results extend a number of similar investigations, including by Dorais, Dzhafarov, Hirst, Mileti, and Shafer~\cite{DDHMS-2016}, Hirschfeldt and Jockusch~\cite{HJ-2016}, Patey~\cite{Patey-2016}, and Brattka and Rakotoniaina~\cite{BR-2017}.

In the sequel, we will regard $\RT^1_k$ as the problem whose instances are colorings $c: \omega \to k$ and whose solutions are colors that appear infinitely often in $c$. Note that this formulation of $\RT^1_k$ is Weihrauch equivalent to the more usual one given in Definition \ref{def:ramsey}, so we will not distinguish these versions when discussing Weihrauch reducibility. In the context of strong Weihrauch reducibility, we will refer to the new version as $\RTc^1_k$. The principle $\RTc^1_k$ can be understood as the Bolzano-Weierstrass theorem for the discrete space $k$, and was indeed studied as $\mathsf{BWT}_k$ by Brattka, Gherardi and Marcone~\cite{BGM-2012}. A central result there is that $\RTc^1_k \suequiv \C_k'$, which tells us that we could alternatively strive to understand the principles $\RTc^1_k$ by studying the finite choice principles $\C_k$, and transferring the results using the jump of strong Weihrauch degrees.\footnote{This approach seems very promising, but is left to future work.}

Given this formulation, the backward functionals of our strong Weihrauch reductions will have single numbers or tuples of numbers as oracles, and hence can be regarded as partial functions. For such a functional $\Psi$, we write $\Psi(n)$ instead of $\Psi^n$.


We begin with the following lemma:

\begin{lemma} \label{lem:W_implies_sW_finite_tolerance}
Suppose that $\mathsf{P} \ured \mathsf{Q}$ and these problems satisfy the following properties:
\begin{itemize}
	\item $\mathsf{P}$ has finite tolerance, i.e., there is some $\Theta$ such that if $C_0$ and $C_1$ are $\mathsf{P}$-instances, $C_0(x) = C_1(x)$ for all $x$ above some $m$, and $S_0$ is a $\mathsf{P}$-solution to $C_0$, then $\Theta^{S_0 \oplus m}$ is a $\mathsf{P}$-solution to $C_1$;
	\item any finite modification of a $\mathsf{P}$-instance is still a $\mathsf{P}$-instance;
	\item solutions to all instances of $\mathsf{P}$ and $\mathsf{Q}$ lie in some fixed finite set.
\end{itemize}
Then $\mathsf{P} \sured \mathsf{Q}$.
\end{lemma}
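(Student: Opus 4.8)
The plan is to upgrade a given Weihrauch reduction $\mathsf{P} \ured \mathsf{Q}$, witnessed by functionals $\Phi$ and $\Psi$, to a strong Weihrauch reduction by using the finite-tolerance hypothesis to ``clean up'' the backward functional so that it no longer needs access to the original instance. The core difficulty is this: in the Weihrauch reduction, $\Psi$ takes $C \oplus \widehat{Y}$ as input, where $C$ is the $\mathsf{P}$-instance; for a strong reduction we must produce the $\mathsf{P}$-solution from $\widehat{Y}$ alone. Since $\mathsf{P}$ has finite tolerance and solutions to $\mathsf{P}$ and $\mathsf{Q}$ all lie in a fixed finite set $D$, we will be able to ``guess'' the relevant finite information and then correct using $\Theta$.

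\textbf{Step 1: Set up the forward functional.} Given a $\mathsf{P}$-instance $C$, keep the same $\Phi$, so that $\widehat{C} := \Phi^C$ is the $\mathsf{Q}$-instance. This part of the reduction does not change.

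\textbf{Step 2: Describe the new backward functional.} Given a $\mathsf{Q}$-solution $\widehat{Y}$ to $\widehat{C}$, we want to compute a $\mathsf{P}$-solution to $C$ from $\widehat{Y}$ only. The idea is to run the following search. Since solutions to $\mathsf{Q}$ lie in the fixed finite set $D$, there are only finitely many possible values for $\widehat{Y}$; in particular $\widehat{Y}$ is (coded by) a finite object. Enumerate all pairs $\seq{C', m}$ where $C'$ is a finite modification of $C$ below $m$ — but wait, we do not have $C$ available either; rather, what we do is the following. Search over all candidate $\mathsf{P}$-instances $C'$ that agree with $C$ cofinitely: concretely, since any finite modification of a $\mathsf{P}$-instance is a $\mathsf{P}$-instance, and since the truth of ``$\widehat{Y}$ is a $\mathsf{Q}$-solution to $\Phi^{C'}$'' is something we can partially verify, we look for the least $m$ and a string $\sigma \in D^{<\omega}$ of length $m$ such that, letting $C'$ be the instance agreeing with $\sigma$ below $m$ and behaving in some canonical fixed way above $m$ (e.g., a constant extension), $\Psi^{C' \oplus \widehat{Y}}$ converges to an element of $D$. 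By the existence of the genuine Weihrauch reduction applied to $C$ itself, such a convergent computation exists (taking $\sigma = C \res m$ for $m$ the use of $\Psi$), so the search halts. Let $S_0 = \Psi^{C' \oplus \widehat{Y}}$ be the value found; $S_0$ is a $\mathsf{P}$-solution to $C'$ (because $C'$ is a $\mathsf{P}$-instance, $\Phi^{C'}$ is a $\mathsf{Q}$-instance, and if $\widehat{Y}$ happens to solve it then $\Psi$ delivers a genuine solution — here we must be slightly careful, since $\widehat{Y}$ need not solve $\Phi^{C'}$; see the obstacle below). Then output $\Theta^{S_0 \oplus m}$, which by finite tolerance is a $\mathsf{P}$-solution to $C$ provided $S_0$ is a $\mathsf{P}$-solution to \emph{some} instance that agrees with $C$ above $m$.

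\textbf{Step 3: The main obstacle and its resolution.} The delicate point is that the $C'$ found by the blind search need not be the true instance $C$, and $\widehat{Y}$ need not be a $\mathsf{Q}$-solution to $\Phi^{C'}$ — so a priori $\Psi^{C' \oplus \widehat{Y}}$ could converge to garbage. This is exactly where finite tolerance saves us: $C'$ agrees with $C$ on $[m, \infty)$ (by taking the canonical extension to match $C$'s cofinite behavior — here one uses that in the intended applications $\mathsf{P}$-instances are eventually canonical, or more robustly, one searches over $C'$ that agree with $C$ cofinitely, which requires knowing $C$'s tail; the clean fix is to note that it suffices to search over $\sigma$ and take $C'$ to be $\sigma$ followed by $C$'s actual tail, which we do \emph{not} have). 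To avoid needing $C$'s tail, the right formulation is: the new backward functional is allowed to depend on $\widehat{Y}$ only, and we argue that \emph{for the specific $\widehat{Y}$ arising from the true $C$}, the search with $\sigma = C\res m_0$ (where $m_0$ is the $\Psi$-use on the true run) succeeds, and \emph{any} earlier-found $\seq{\sigma, m}$ yields an $S_0$ that solves the instance $C'' := \sigma \widehat{} (C \res [m,\infty))$; since $C''$ agrees with $C$ above $m$, finite tolerance gives that $\Theta^{S_0 \oplus m}$ solves $C$. The subtlety that $\Psi^{C' \oplus \widehat{Y}}$ on the \emph{wrong} $C'$ might converge to a non-solution of $C''$ is handled by observing that $\Psi$ cannot tell $C'$ from $C''$: they differ only above $m$, but $\Psi$'s computation on input length used already halted using only $\sigma = C'\res m = C''\res m$, hence $\Psi^{C'\oplus\widehat Y} = \Psi^{C''\oplus\widehat Y}$, and $\widehat Y$ \emph{is} a $\mathsf{Q}$-solution to $\Phi^{C''}$? — no, that still fails. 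The genuinely correct resolution, which I expect is what the authors do, is: the search only ever needs to succeed with the \emph{true} $\sigma = C\res m_0$, because that run is guaranteed to halt; so rather than searching blindly, the new $\Psi'$ dovetails and outputs the \emph{first} convergent value, and correctness is argued only along the true computation path — the point being that we \emph{don't} need every candidate to give a correct answer, only that a correct one is found first or that all found ones are correct via the finite-tolerance argument applied to the instance $\sigma \widehat{}(C\res[m,\infty))$ which shares $C$'s tail. I expect the cleanest writeup fixes $m$ large enough (larger than the $\Psi$-use) once and for all from the structure, making the search trivial; the main obstacle is thus purely bookkeeping about which finite data $\Psi'$ is permitted to see, and finite tolerance plus finiteness of $D$ is precisely the tool that closes the gap.
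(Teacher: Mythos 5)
There is a genuine gap, and to your credit you locate it precisely yourself: after proposing to keep the forward functional $\Phi$ unchanged and have the new backward functional blindly search over candidate initial segments $\sigma$, you observe that a wrong $\sigma$ can make $\Psi$ converge to garbage (since the given $\mathsf{Q}$-solution $\widehat{Y}$ need not solve $\Phi^{C'}$ for the fabricated $C'$, and $C'$ need not share $C$'s tail, so finite tolerance does not apply), you write ``no, that still fails,'' and the proof ends with a speculation rather than an argument. The missing idea is that one must modify the \emph{forward} functional --- exactly the move you rule out in your Step 1. The paper's proof proceeds as follows. Let $S$ be the fixed finite set containing all $\mathsf{Q}$-solutions. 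By a Cohen-forcing argument iterated over the finitely many $s \in S$, choose a single finite string $\tau$ (an initial segment of some $\mathsf{P}$-instance) that \emph{decides}, for each $s \in S$, whether $\Psi^{C \oplus s}$ converges for $\mathsf{P}$-instances $C$ extending $\tau$. The new forward functional sends $C$ to $\Phi^{C'}$, where $C'$ is $C$ with its first $|\tau|$ values overwritten by $\tau$; this is legal because finite modifications of $\mathsf{P}$-instances are $\mathsf{P}$-instances. The new backward functional is simply $s \mapsto \Theta^{\Psi^{\tau \oplus s} \oplus |\tau|}$, with the fixed string $\tau$ hard-coded, so it needs no access to $C$ at all.

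Correctness then goes through cleanly: if $s$ solves $\Phi^{C'}$, then $\Psi^{C' \oplus s}$ converges to a $\mathsf{P}$-solution of $C'$; since $C'$ extends $\tau$ and $\tau$ decides convergence of $\Psi^{\cdot \oplus s}$, we get $\Psi^{\tau \oplus s}\converges = \Psi^{C' \oplus s}$; and since $C$ and $C'$ agree above $|\tau|$, finite tolerance converts this into a solution to $C$. Your closing guess --- ``fix $m$ larger than the $\Psi$-use once and for all'' --- cannot work as stated because the use of $\Psi$ varies with the instance and there is no uniform bound; the genericity argument is what replaces it, and it only helps because the forward functional forces every instance to begin with the chosen $\tau$. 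Without that overwriting step, no amount of searching on the backward side can recover which finite initial segment the true instance actually had.
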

\begin{proof}
Fix functionals $\Phi$ and $\Psi$ witnessing that $\mathsf{P} \ured \mathsf{Q}$. Since solutions to all instances of $\mathsf{P}$ lie in some fixed finite set, we may assume that for each $\mathsf{P}$-instance $C$ and each $s$ that is a $\mathsf{Q}$-solution to $\Phi^C$, we have that $\Psi^{C \oplus s}$ outputs a number that codes a $\mathsf{P}$-solution to $C$. Fix a functional $\Theta$ witnessing that $\mathsf{P}$ has finite tolerance. Fix a finite solution set $S$ for $\mathsf{Q}$. We define functionals that witness that $\mathsf{P} \sured \mathsf{Q}$.

First, we construct a $\tau$ that is a finite initial segment of some $\mathsf{P}$-instance, such that $\tau$ decides (in the sense of Cohen $1$-genericity) for each $s \in S$ whether $\Psi^{C \oplus s}$ converges for $\mathsf{P}$-instances $C$ extending $\tau$. Since $S$ is finite, such a $\tau$ exists.

We define $\widehat{\Phi}$ by $\widehat{\Phi}^C = \Phi^{C'}$, where $C'$ is obtained from $C$ by replacing its initial segment of length $|\tau|$ by $\tau$ itself. By our assumption on $\mathsf{P}$, this $C'$ is still a $\mathsf{P}$-instance.

We define $\widehat{\Psi}$ by $\widehat{\Psi}(s) = \Theta^{\Psi^{\tau \oplus s} \oplus |\tau|}$. We show that $\widehat{\Phi}$ and $\widehat{\Psi}$ witness that $\mathsf{P} \sured \mathsf{Q}$.

Take any $\mathsf{P}$-instance $C$. Since $C'$ is a $\mathsf{P}$-instance, $\widehat{\Phi}^C = \Phi^{C'}$ is a $\mathsf{Q}$-instance. Let $s$ be any $\mathsf{Q}$-solution to $\Phi^{C'}$. Then $\Psi^{C' \oplus s}$ is a $\mathsf{P}$-solution to $C'$. In particular, $\Psi^{C' \oplus s}$ converges. Since $C'$ extends $\tau$, by our construction of $\tau$, we have that $\Psi^{\tau \oplus s}\converges = \Psi^{C' \oplus s}\converges$. Hence $\Psi^{\tau \oplus s}$ is a $\mathsf{P}$-solution to $C'$. We conclude that $\widehat{\Psi}(s) = \Theta^{\Psi^{\tau \oplus s} \oplus |\tau|}$ is a $\mathsf{P}$-solution to $C$.
\end{proof}

It is easy to see that $\RTc^1_k$ (and finite parallel products of $\RTc^1_k$) satisfy the properties of $\mathsf{P}$ and $\mathsf{Q}$ in Lemma \ref{lem:W_implies_sW_finite_tolerance}. Therefore we have the following.

\begin{corollary}
If $\prod_{m=0}^n \RT^1_{k_m} \ured \RT^1_N$, then $\prod_{m=0}^n \RTc^1_{k_m} \sured \RTc^1_N$.
\end{corollary}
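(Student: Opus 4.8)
The plan is to simply invoke Lemma~\ref{lem:W_implies_sW_finite_tolerance}, taking $\mathsf{P} = \prod_{m=0}^n \RTc^1_{k_m}$ and $\mathsf{Q} = \RTc^1_N$. The hypothesis already supplies the Weihrauch reduction required by the lemma: recall that in this section $\RT^1_j$ and $\RTc^1_j$ name the very same problem (the change of notation merely tracking whether we are discussing $\ured$ or $\sured$), so ``$\prod_{m=0}^n \RT^1_{k_m} \ured \RT^1_N$'' is literally the assertion $\mathsf{P} \ured \mathsf{Q}$.

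The remaining work is to check that $\mathsf{P}$ and $\mathsf{Q}$ meet the three bullet conditions of Lemma~\ref{lem:W_implies_sW_finite_tolerance}. I would first fix a coding of tuples $\seq{c_0,\dots,c_n}$ of colorings under which modifying the codeword on a finite initial segment corresponds to modifying each $c_m$ on a finite initial segment (any of the standard interleaving codings works). Granting that, finite tolerance is immediate: if two $\mathsf{P}$-instances $C_0$ and $C_1$ agree above some $m$, then in each coordinate the two colorings agree cofinitely and hence have the same set of infinitely-occurring colors, so a $\mathsf{P}$-solution $\seq{i_0,\dots,i_n}$ to $C_0$ is already a $\mathsf{P}$-solution to $C_1$; the witnessing functional $\Theta$ can thus be taken to output its solution input verbatim, and the bound $m$ is not even used. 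The other two conditions are equally routine: every function $\omega \to k_m$ is an $\RTc^1_{k_m}$-instance, so every finite modification of a $\mathsf{P}$-instance is again a $\mathsf{P}$-instance; and the solutions to instances of $\mathsf{P}$ all lie in the finite set $k_0 \times \dots \times k_n$, while the solutions to instances of $\mathsf{Q} = \RTc^1_N$ all lie in the finite set $N$. Lemma~\ref{lem:W_implies_sW_finite_tolerance} then yields $\mathsf{P} \sured \mathsf{Q}$, i.e., $\prod_{m=0}^n \RTc^1_{k_m} \sured \RTc^1_N$, which is the claim.

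There is no genuine obstacle here; the mathematical content is entirely absorbed into Lemma~\ref{lem:W_implies_sW_finite_tolerance}. The only point that warrants a moment's care is making the tuple-coding explicit enough that ``finite change of the codeword'' translates into ``finite change in each coordinate'', since finite tolerance is phrased in terms of codewords — but this is purely bookkeeping and poses no difficulty.
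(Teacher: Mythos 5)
Your proof is correct and is exactly the paper's argument: the paper likewise just observes that $\RTc^1_k$ and finite parallel products of $\RTc^1_k$ satisfy the three hypotheses of Lemma~\ref{lem:W_implies_sW_finite_tolerance} and then applies that lemma. You have merely written out the routine verification of those hypotheses that the paper leaves implicit.
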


Optimality then follows from a counting argument:

\begin{proposition}
If $\prod_{m=0}^n \RTc^1_{k_m} \sured \RTc^1_N$, then $N \geq \prod_{m=0}^n k_m$.
\end{proposition}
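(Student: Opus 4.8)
The plan is to play the reduction against the finitely many ``constant'' product instances, exploiting the fact that, in the formulation of $\RTc^1_N$ used in this section, the backward functional of a strong Weihrauch reduction receives only a single number as its oracle, and so can return at most $N$ distinct tuples.

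Concretely, I would suppose $\prod_{m=0}^n \RTc^1_{k_m} \sured \RTc^1_N$ as witnessed by a forward functional $\Phi$ and a backward functional $\Psi$, and write $K = \prod_{m=0}^n k_m$. For each tuple $\vec{a} = \seq{a_0,\dots,a_n}$ with $a_m < k_m$ for all $m \leq n$, let $\vec{C}^{\vec{a}} = \seq{C_0^{\vec{a}},\dots,C_n^{\vec{a}}}$ be the product instance in which $C_m^{\vec{a}} : \omega \to k_m$ is constantly equal to $a_m$. The only color occurring infinitely often in a constant coloring is its value, so the unique $\prod_{m=0}^n \RTc^1_{k_m}$-solution to $\vec{C}^{\vec{a}}$ is $\vec{a}$ itself.

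Next I would apply the reduction to each $\vec{C}^{\vec{a}}$. Since $\Phi^{\vec{C}^{\vec{a}}}$ is an instance of $\RTc^1_N$, some color $s^{\vec{a}} < N$ occurs infinitely often in it, and then $\Psi(s^{\vec{a}})$ must be a $\prod_{m=0}^n \RTc^1_{k_m}$-solution to $\vec{C}^{\vec{a}}$, hence $\Psi(s^{\vec{a}}) = \vec{a}$. If $\vec{a} \neq \vec{b}$ then $\Psi(s^{\vec{a}}) = \vec{a} \neq \vec{b} = \Psi(s^{\vec{b}})$, so $s^{\vec{a}} \neq s^{\vec{b}}$; thus $\vec{a} \mapsto s^{\vec{a}}$ is an injection from the set of $K$ such tuples into $\{0,\dots,N-1\}$, whence $N \geq K = \prod_{m=0}^n k_m$.

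There is no real obstacle once the key observation is made; the only point requiring care is that the backward functional $\Psi$ genuinely sees nothing but the single number $s^{\vec{a}}$ --- not the instance $\vec{C}^{\vec{a}}$ --- which is exactly the convention for strong Weihrauch reductions of $\RTc^1_k$ adopted in this section, and which is what caps the number of distinct outputs of $\Psi$ at $N$.
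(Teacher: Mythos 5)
Your proposal is correct and is essentially the same as the paper's argument: both apply the reduction to the constant product instances $(a_0^\omega,\dots,a_n^\omega)$, note that each such instance has the unique solution $(a_0,\dots,a_n)$, and conclude by counting that $\Psi$ must take at least $\prod_{m=0}^n k_m$ distinct values on $\{0,\dots,N-1\}$.
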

\begin{proof}
Fix $\Phi$ and $\Psi$ witnessing that $\prod_{m=0}^n \RTc^1_{k_m} \sured \RTc^1_N$. We show that for each $(a_0,\dots,a_n) \in \prod_{m=0}^n k_m$, there is some $i < N$ such that $\Psi(i) = (a_0,\dots,a_n)$.

Consider the tuple of constant colorings $(a_0^\omega,\dots,a_n^\omega)$. This is a $\prod_{m=0}^n \RTc^1_{k_m}$-instance, so $\Phi^{(a_0^\omega,\dots,a_n^\omega)}$ is an $\RTc^1_N$-instance with some solution $i$. Then $\Psi(i)$ must be a solution to $(a_0^\omega,\dots,a_n^\omega)$, so $\Psi(i) = (a_0,\dots,a_n)$.
\end{proof}

\begin{corollary}
If $\prod_{m=0}^n \RT^1_{k_n} \ured \RT^1_N$, then $N \geq \prod_{m=0}^n k_m$.
\end{corollary}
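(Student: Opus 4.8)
The plan is to simply chain together the two results immediately preceding this corollary, so the argument is essentially a one-line deduction. First I would invoke the corollary stating that if $\prod_{m=0}^n \RT^1_{k_m} \ured \RT^1_N$ then $\prod_{m=0}^n \RTc^1_{k_m} \sured \RTc^1_N$; this is where the hypothesis is used, and it rests on Lemma \ref{lem:W_implies_sW_finite_tolerance} together with the observation that $\RTc^1_k$ and finite parallel products thereof have finite tolerance, are closed under finite modifications, and have solutions confined to a fixed finite set.

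Next I would apply the preceding proposition, which says that $\prod_{m=0}^n \RTc^1_{k_m} \sured \RTc^1_N$ forces $N \geq \prod_{m=0}^n k_m$; this step is the counting argument using constant colorings. Combining the two gives the conclusion directly.

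There is no real obstacle here: the corollary is a formal consequence of results already in hand, and the only thing to be careful about is matching the index sets ($\RT^1_{k_n}$ in the statement should of course be read as $\RT^1_{k_m}$, consistent with the product notation). A complete proof would read roughly: ``By the previous corollary, $\prod_{m=0}^n \RT^1_{k_m} \ured \RT^1_N$ implies $\prod_{m=0}^n \RTc^1_{k_m} \sured \RTc^1_N$, and then the previous proposition yields $N \geq \prod_{m=0}^n k_m$.''
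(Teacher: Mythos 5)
Your proposal is correct and is exactly what the paper intends: the corollary is stated without proof precisely because it is the immediate composition of the preceding corollary (passing from $\ured$ to $\sured$ via Lemma \ref{lem:W_implies_sW_finite_tolerance}) with the preceding counting proposition. Nothing further is needed.
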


Therefore the right-hand side of $\prod_{m=0}^n \RTc^1_{k_m} \sured \RTc^1_{\prod_{m=0}^n k_m}$ is optimal, with regards to both $\ured$ and $\sured$. However, we will see that $\RT^1_{\prod_{m=0}^n k_m} \nured \prod_{m=0}^n \RT^1_{k_m}$ for all $n \geq 1$ and $k_0,\dots,k_n \geq 2$ (Proposition \ref{prop:lousier_bound}). In the rest of this section, we attempt to find the smallest $N$ such that
\[ \RT^1_N \nured \prod_{m=0}^n \RT^1_{k_m}. \]

We start by giving a lower bound for $N$.

\begin{proposition} \label{prop:lower_bound}
For all $n \geq 1$ and $k_0,\dots,k_n \geq 2$,
\[ \RTc^1_{1+\sum_{m=0}^n (k_m-1)} \sured \prod_{m=0}^n \RTc^1_{k_m}. \]
\end{proposition}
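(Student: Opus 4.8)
The plan is to reduce everything to the two-factor case and then iterate. Write $N_j = 1 + \sum_{m=0}^j (k_m - 1)$, so that $N_n = N_{n-1} + (k_n - 1)$. The core claim to prove is
\[ \RTc^1_{a+b-1} \sured \RTc^1_a \times \RTc^1_b \qquad \text{for all } a, b \geq 1. \]
Granting this, the proposition follows by induction on $n$: the case $n = 0$ is trivial since $N_0 = k_0$, and for the step we apply the core claim with $a = N_{n-1}$ and $b = k_n$ to get $\RTc^1_{N_n} \sured \RTc^1_{N_{n-1}} \times \RTc^1_{k_n}$, then use the inductive hypothesis together with the (easy, and noted in the text) facts that $\sured$ is preserved by parallel products and is transitive. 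So the whole work is in the two-factor construction.

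For the core claim, fix an instance $c : \omega \to a+b-1$ and split the color set as $\{0,\dots,a+b-2\} = L \sqcup R$ with $|L| = a-1$ and $|R| = b$; fix some $r^* \in R$. Define $c_0$ with color set $L \cup \{\star\}$ (of size $a$, identified with $\{0,\dots,a-1\}$) by $c_0(x) = c(x)$ if $c(x) \in L$ and $c_0(x) = \star$ otherwise; define $c_1$ with color set $R$ (of size $b$) by $c_1(x) = c(x)$ if $c(x) \in R$, and otherwise $c_1(x) = c(y)$ for the largest $y \leq x$ with $c(y) \in R$, with $c_1(x) = r^*$ if there is no such $y$. Informally, $c_1$ copies the $R$-colors of $c$ and "freezes" its last value during stretches where $c$ shows an $L$-color; both $c_0$ and $c_1$ are clearly uniformly computable from $c$, and are total. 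For the backward functional set $\Psi(a_0,a_1) = a_0$ if $a_0 \in L$, and $\Psi(a_0,a_1) = a_1$ if $a_0 = \star$; note $\Psi$ depends only on the solution pair, as required for $\sured$.

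To check correctness, let $\seq{a_0,a_1}$ be a solution to $\seq{c_0,c_1}$, i.e., $a_0$ occurs infinitely often in $c_0$ and $a_1$ occurs infinitely often in $c_1$. If $a_0 \in L$ then $c_0^{-1}(a_0) = c^{-1}(a_0)$ by construction, so $a_0$ occurs infinitely often in $c$, and $\Psi(a_0,a_1) = a_0$ is a solution to $c$. If $a_0 = \star$ then $\{x : c(x) \in R\}$ is infinite (this is exactly what $\star$ occurring infinitely often in $c_0$ says), and I claim that under this hypothesis an $R$-color occurs infinitely often in $c_1$ iff it occurs infinitely often in $c$; then $a_1$ occurs infinitely often in $c$ and $\Psi(a_0,a_1) = a_1$ works. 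The reverse implication is immediate, since each occurrence of $r$ in $c$ is an occurrence of $r$ in $c_1$. For the forward implication, suppose $r \in R$ occurs only finitely often in $c$, with last occurrence at position $x_0$ (or $x_0 = -1$). Pick $x_1 > x_0$ with $c(x_1) \in R$; then for every $x \geq x_1$ the value $c_1(x)$ is $c(y)$ for some $y$ with $c(y) \in R$ and $y$ at least the largest $R$-position $\leq x$, hence $y \geq x_1 > x_0$, forcing $c(y) \neq r$. So $c_1(x) \neq r$ for all $x \geq x_1$, and $r$ occurs only finitely often in $c_1$. This finishes the core claim, hence the proposition.

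The only genuinely delicate point — the main obstacle, such as it is — is this last verification in the $a_0 = \star$ case: one must confirm that the "freezing" device used to manufacture the total $R$-valued coloring $c_1$ does not create a spurious infinitely-recurring color, and for this it is essential that the frozen value keeps getting refreshed, which is guaranteed precisely because $\star$ being a solution of $c_0$ means $c$ revisits $R$ infinitely often. Everything else (totality of $c_0,c_1$; the reduction of the general $n$ to the two-factor case; the product/transitivity bookkeeping) is routine.
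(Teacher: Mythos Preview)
Your proof is correct and takes a genuinely different route from the paper. The paper gives a one-shot construction for all $n+1$ factors simultaneously: for each $m \leq n$ it defines the $m$th coloring $d_m(x)$ by checking which color among $\{0,\ldots,\sum_{i\leq m}(k_i-1)\}$ appears most often in $c(0),\ldots,c(x)$, collapsing the lower block $\{0,\ldots,\sum_{i<m}(k_i-1)\}$ to a single boundary value; the backward functional then walks down from $a_n$ to $a_0$, stopping at the first $a_m$ that is not a boundary value and outputting it. You instead isolate the two-factor case $\RTc^1_{a+b-1} \sured \RTc^1_a \times \RTc^1_b$ and iterate, replacing the frequency-count device with a ``freeze the last $R$-value'' construction for $c_1$. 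Your approach is more modular---the two-factor lemma is a clean standalone statement---and the freeze trick is slightly more elementary than the majority-count argument, though as you correctly flag, its verification in the $a_0 = \star$ case is the one place where something needs checking. The paper's direct construction buys a single uniform definition of all the colorings and avoids invoking closure of $\sured$ under parallel products. In spirit the backward functionals coincide: both implement a cascade of ``if this coordinate signals overflow into the lower block, defer to the next coordinate.''
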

\begin{proof}
Suppose we are given an instance $c$ of $\RTc^1_{1+\sum_{m=0}^n (k_m-1)}$. For $0 \leq m \leq n$, we define colorings
\[ d_m: \omega \to \left\{\sum_{i=0}^{m-1} (k_i-1),\dots,\sum_{i=0}^m (k_i-1)\right\} \]
as follows. Note that for each $m$, $d_m$ will be a $k_m$-coloring.

For each $m$ and $x$, we define $d_m(x)$ as follows. First check which color among $0,\dots,\sum_{i=0}^m (k_i-1)$ appears most often among $c(0),\dots,c(x)$. (Resolve ties by picking the smallest color.) If this color is among $0,\dots,\sum_{i=0}^{m-1} (k_i-1)$, let $d_m(x) = \sum_{i=0}^{m-1} (k_i-1)$. Otherwise, let $d_m(x)$ be this color.

Now, given $(a_0,\ldots,a_n)$ such that, for each $m$, the color $a_m$ appears infinitely often in $d_m$, we want to compute a color that appears infinitely often in $c$. Start by considering $a_n$. If $a_n \neq \sum_{i=0}^{n-1} (k_i-1)$, then for infinitely many $x$, the color $a_n$ appears most often among $c(0),\dots,c(x)$. In particular, $a_n$ appears infinitely often in $c$.

On the other hand, if $a_n = \sum_{i=0}^{n-1} (k_i-1)$, then for infinitely many $x$, some color among $0,\dots,\sum_{i=0}^{n-1} (k_i-1)$ appears most often among $c(0),\dots,c(x)$. By the pigeonhole principle, some color among $0,\dots,\sum_{i=0}^{n-1} (k_i-1)$ appears infinitely often in $c$. We then proceed to consider $a_{n-1}$ and repeat the above case division. Eventually we either reach some $a_m$ that is not equal to $\sum_{i=0}^{m-1} (k_i-1)$, in which case $a_m$ appears infinitely often in $c$, or we reach $a_0 = 0$, in which case $0$ appears infinitely often in $c$.
\end{proof}


In order to obtain upper bounds for $N$, we begin by restricting the reductions that we need to diagonalize against. Firstly, by Lemma \ref{lem:W_implies_sW_finite_tolerance}, we need only handle strong Weihrauch reductions:

\begin{proposition}
If $\RT^1_N \ured \prod_{m=0}^n \RT^1_{k_m}$, then $\RTc^1_N \sured \prod_{m=0}^n \RTc^1_{k_m}$.
\end{proposition}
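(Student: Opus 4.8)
The plan is to apply Lemma~\ref{lem:W_implies_sW_finite_tolerance} with $\mathsf{P} = \RT^1_N$ (in the ``colors appearing infinitely often'' formulation, i.e.\ $\RTc^1_N$ as a set of instances, but here considered as $\RT^1_N$ under $\ured$) and $\mathsf{Q} = \prod_{m=0}^n \RT^1_{k_m}$. So the task reduces entirely to checking that $\mathsf{P}$ and $\mathsf{Q}$ satisfy the three hypotheses of that lemma. First I would observe that the solution set of $\RTc^1_N$ is contained in $\{0,\dots,N-1\}$, and the solution set of $\prod_{m=0}^n \RT^1_{k_m}$ is contained in $\prod_{m=0}^n k_m$, both finite; this gives the third bullet. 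Second, any finite modification of a coloring $c : \omega \to N$ is still a coloring $\omega \to N$, hence still an instance of $\RT^1_N$; this gives the second bullet.

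The only point requiring a short argument is finite tolerance for $\RT^1_N$. Here I would exhibit the functional $\Theta$ directly: given a $\RT^1_N$-solution $S_0$ to $C_0$ (so $S_0$ is a color occurring infinitely often in $C_0$) together with a bound $m$ such that $C_0(x) = C_1(x)$ for all $x \ge m$, the color $S_0$ occurs infinitely often in $C_1$ as well, since $C_0$ and $C_1$ differ only below $m$. Thus $\Theta^{S_0 \oplus m}$ can simply output $S_0$, ignoring $m$ entirely. (If instead one works with the Definition~\ref{def:ramsey} formulation, whose solutions are infinite homogeneous sets, finite tolerance is the slightly less trivial observation that $S_0$ minus its elements below $m$ is homogeneous for $C_1$; but for the $\RTc^1$-style formulation the argument is as above.) With all three hypotheses verified, Lemma~\ref{lem:W_implies_sW_finite_tolerance} immediately yields $\RT^1_N \sured \prod_{m=0}^n \RT^1_{k_m}$, which in the strong-reducibility notation introduced just before the statement (where the new version is written $\RTc^1_k$) reads $\RTc^1_N \sured \prod_{m=0}^n \RTc^1_{k_m}$.

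There is essentially no obstacle here: the proposition is a direct instantiation of the preceding lemma, and the remark immediately after Lemma~\ref{lem:W_implies_sW_finite_tolerance} already asserts that $\RTc^1_k$ and its finite parallel products satisfy the relevant properties. The only care needed is bookkeeping: making sure the $\mathsf{P}$ and $\mathsf{Q}$ of the lemma are matched to $\RT^1_N$ and $\prod_{m=0}^n \RT^1_{k_m}$ respectively (and not the other way around), and noting that a product of colorings is presented as a tuple so that a finite modification of each coordinate is again a product-instance and finite tolerance holds coordinatewise with $\Theta$ acting in parallel. Accordingly, the proof can be stated in one or two sentences: the hypotheses of Lemma~\ref{lem:W_implies_sW_finite_tolerance} are met by the remark following that lemma, so the conclusion follows.
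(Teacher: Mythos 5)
Your proposal is correct and matches the paper's argument exactly: the paper proves this proposition by the same direct appeal to Lemma~\ref{lem:W_implies_sW_finite_tolerance} with $\mathsf{P} = \RTc^1_N$ and $\mathsf{Q} = \prod_{m=0}^n \RTc^1_{k_m}$, relying on the observation that these problems satisfy the three hypotheses (finite tolerance, closure under finite modification of instances, and solutions lying in a fixed finite set). Your verification of those hypotheses, including the trivial $\Theta$ for finite tolerance in the colors-occurring-infinitely-often formulation, is exactly what the paper leaves implicit.
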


We can impose a further restriction:

\begin{lemma}
Suppose $\RTc^1_N \sured \prod_{m=0}^n \RTc^1_{k_m}$ via some forward functionals $\Phi_m$, $0 \leq m \leq n$, where $\Phi_m$ computes the $m^{\text{th}}$ coloring in the $\prod_{m=0}^n \RTc^1_{k_m}$-instance, and a backward functional $\Psi$. Then for any $i < N$, there exists $(a_0,\dots,a_n)$ where each $a_m < k_m$ and $\Psi(a_0,\dots,a_n) = i$.
\end{lemma}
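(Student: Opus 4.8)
The plan is to test the reduction against a single, maximally simple instance of $\RTc^1_N$. Fix $i < N$ and consider the constant coloring $c = i^\omega : \omega \to N$. This is a legitimate $\RTc^1_N$-instance, and its set of solutions is exactly $\{i\}$, since $i$ is the only color that occurs in $c$, let alone infinitely often. I would then push $c$ through the given strong Weihrauch reduction.

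By the definition of strong Weihrauch reducibility, $\langle \Phi_0^c,\ldots,\Phi_n^c \rangle$ must be an instance of $\prod_{m=0}^n \RTc^1_{k_m}$; in particular each $\Phi_m^c$ is a \emph{total} coloring $\omega \to k_m$. Hence, by the pigeonhole principle, for each $m \leq n$ there is some $a_m < k_m$ that appears infinitely often in $\Phi_m^c$, i.e., that is an $\RTc^1_{k_m}$-solution to $\Phi_m^c$. Then $(a_0,\ldots,a_n)$ is a solution to the product instance $\langle \Phi_0^c,\ldots,\Phi_n^c \rangle$, so the backward functional must be such that $\Psi(a_0,\ldots,a_n)$ is a solution to $c$. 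Since the only such solution is $i$, we conclude $\Psi(a_0,\ldots,a_n) = i$, as required.

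There is essentially no obstacle here; the one point worth stating explicitly is that strong Weihrauch reducibility guarantees that each $\Phi_m^c$ is a genuine (total) $\RTc^1_{k_m}$-instance, which is what licenses the pigeonhole step. Structurally this is the mirror image of the earlier counting argument showing that $\prod_{m=0}^n \RTc^1_{k_m} \sured \RTc^1_N$ forces $N \geq \prod_{m=0}^n k_m$: there one reads off the backward functional on constant-tuple instances of the product, whereas here one reads it off on the constant single-color instance of $\RTc^1_N$.
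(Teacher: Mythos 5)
Your proof is correct and is essentially identical to the paper's: both apply the reduction to the constant coloring $i^\omega$, note that the resulting product instance has some solution $(a_0,\dots,a_n)$, and conclude that $\Psi(a_0,\dots,a_n)$ must equal $i$ because $i$ is the unique solution to the constant coloring. The only difference is that you spell out the pigeonhole step that the paper leaves implicit.
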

\begin{proof}
Given $i < N$, consider the coloring $c$ that is constantly $i$. Then the tuple $(\Phi^c_0,\dots,\Phi^c_n)$ is a $\prod_{m=0}^n \RTc^1_{k_m}$-instance. Hence it has some solution $(a_0,\dots,a_n)$. The only solution to $c$ is $i$, so $\Psi(a_0,\dots,a_n)$ must be $i$.
\end{proof}

Combining the previous two facts, we obtain:

\begin{corollary} \label{cor:reduction_wlog}
Suppose $\RT^1_N \ured \prod_{m=0}^n \RT^1_{k_m}$. Then $\RTc^1_N \sured \prod_{m=0}^n \RTc^1_{k_m}$, as witnessed by some $\Phi_m$, $0 \leq m \leq n$, and $\Psi$ where $\Psi: \prod_{m=0}^n k_m \to N$ is a surjective partial function.
\end{corollary}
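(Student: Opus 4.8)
The plan is to obtain the corollary by stringing together the two results immediately preceding it, with only a small amount of bookkeeping about the shape of the backward functional. First I would apply the preceding proposition: from $\RT^1_N \ured \prod_{m=0}^n \RT^1_{k_m}$ it yields $\RTc^1_N \sured \prod_{m=0}^n \RTc^1_{k_m}$ (this is the step that invokes Lemma \ref{lem:W_implies_sW_finite_tolerance}, since finite products of the $\RTc^1_{k_m}$ have finite tolerance, are closed under finite modification, and have solutions lying in the fixed finite set $\prod_{m=0}^n k_m$). I would write the witnessing forward functional as $\Phi_0,\dots,\Phi_n$, where $\Phi_m$ outputs the $m$th component coloring of the $\prod_{m=0}^n \RTc^1_{k_m}$-instance, and let $\Psi$ be the backward functional.

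Next I would normalize $\Psi$ so that it has the stated form. A solution to an instance of $\prod_{m=0}^n \RTc^1_{k_m}$ is a tuple $(a_0,\dots,a_n)$ with each $a_m < k_m$, and in a strong Weihrauch reduction the backward functional reads only the solution, so $\Psi$ may be regarded as a partial function on tuples of numbers. Restricting its domain to $\prod_{m=0}^n k_m$ changes nothing, since genuine solutions already have this form; and replacing $\Psi$ by the partial function that agrees with it exactly when its output is below $N$ and is undefined otherwise also costs nothing, because on a genuine solution of a genuine instance $\Psi$ is required to return a genuine $\RTc^1_N$-solution, i.e.\ a color below $N$. After these adjustments $\Psi$ is a partial function $\prod_{m=0}^n k_m \to N$ still witnessing the reduction together with the $\Phi_m$.

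Finally, surjectivity of $\Psi$ is exactly the content of the preceding lemma: feeding the constant coloring $i^\omega$ through the reduction, the tuple $(\Phi_0^{i^\omega},\dots,\Phi_n^{i^\omega})$ is a $\prod_{m=0}^n \RTc^1_{k_m}$-instance, hence has some solution $(a_0,\dots,a_n)$ with each $a_m < k_m$, and since the only solution to $i^\omega$ is $i$ we get $\Psi(a_0,\dots,a_n) = i$. As $i < N$ was arbitrary, $\Psi$ is a surjective partial function onto $N$, which completes the argument. I do not anticipate any genuine obstacle here: the two substantive ingredients are already in hand, so the only thing to get right is the purely formal matter of pinning down the domain and codomain of $\Psi$ without disturbing the reduction.
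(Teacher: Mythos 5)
Your argument is correct and matches the paper exactly: the paper derives this corollary by simply combining the two preceding results (the proposition obtained from Lemma \ref{lem:W_implies_sW_finite_tolerance} giving the strong reduction, and the lemma on constant colorings giving surjectivity of $\Psi$), which is precisely what you do. The normalization of $\Psi$ to a partial function $\prod_{m=0}^n k_m \to N$ is the same implicit bookkeeping the paper performs.
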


Henceforth, we will always assume that our reductions of $\RTc^1_N$ to $\prod_{m=0}^n \RTc^1_{k_m}$ have the above special form. In order to diagonalize against such reductions, it will be convenient to have the following notion of covering a tuple of colors using a set of tuples of colors.

\begin{definition}
If $X \subseteq \prod_{m=0}^n k_m$ and $(i_0,\dots,i_n) \in \prod_{m=0}^n k_m$, we say that $X$ \emph{covers} $(i_0,\dots,i_n)$ if for each $0 \leq m \leq n$, there is an $(a_0,\dots,a_n) \in X$ such that $a_m = i_m$.
\end{definition}

Observe that if $c$ is a $\prod_{m=0}^n \RTc^1_{k_m}$-instance whose solution set contains $X$, and $X$ covers $(i_0,\dots,i_n)$, then $(i_0,\dots,i_n)$ is also a solution to $c$.

The following terminology will also be useful.

\begin{definition}
For a surjective partial function $\Psi: \prod_{m=0}^n k_m \to N$, we refer to each $\Psi^{-1}(i)$ as a \emph{fiber}. We call a fiber of size one a \emph{singleton}.
\end{definition}

We now work towards an upper bound ($\approx \frac{\prod k_m}{2}$) for $N$. Suppose we want to show that $\RT^1_N \nured \prod_{m=0}^n \RT^1_{k_m}$ for some $N$. Towards a contradiction, we may (by Corollary \ref{cor:reduction_wlog}) fix $\Phi_m$, $0 \leq m \leq n$, and $\Psi$ witnessing that $\RTc^1_N \sured \prod_m \RTc^1_{k_m}$ such that $\Psi$ is a surjective partial function from $\prod_{m=0}^n k_m$ to $N$. We aim to construct $c: \omega \to N$ and some $(a_0,\dots,a_n)$ such that $(a_0,\dots,a_n)$ is a solution to $\Phi^c_0,\dots,\Phi^c_n$, yet $\Psi(a_0,\dots,a_n)$ is not a solution to $c$.

Our basic strategy is to choose $N$ large enough so that the following combinatorial property holds for all surjective partial functions $\Psi: \prod_{m=0}^n k_m \to N$:

\vspace{2ex}\par
\hfill\parbox{\dimexpr 0.75\textwidth}
{There is some nonempty $S \subsetneq N$ such that for any set of $(a_0,\dots,a_n)$'s whose image under $\Psi$ is exactly $S$, the $(a_0,\dots,a_n)$'s cover some $(b_0,\dots,b_n)$ that maps outside $S$ under $\Psi$.}
\hfill\llap{($\ast$)}\vspace{2ex}\par

Assuming ($\ast$), we may construct $c$ by repeatedly looping through colors in $S$: for each $i \in S$, extend constantly by $i$ until there is some $(a_0,\dots,a_n)$ that maps to $i$ under $\Psi$, such that for all $0 \leq m \leq n$, we have that $\Phi^c_m$ has some new element of color $a_m$. (This must happen eventually: if $c$ is the $\RTc^1_N$-instance produced by extending the current finite coloring by $i$ forever, then $\Phi^c_0,\dots,\Phi^c_n$ is a $\prod_{m=0}^n \RTc^1_{k_m}$-instance with some solution $(a_0,\dots,a_n)$. Then $\Psi(a_0,\dots,a_n) = i$, and for each $0 \leq m \leq n$, some new element of color $a_m$ must appear at some finite stage of $\Phi^c_m$.)

Then for each $i \in S$, there is some $(a_0,\dots,a_n)$ such that $\Psi(a_0,\dots,a_n) = i$ and $(a_0,\dots,a_n)$ is a solution to $\Phi^c_0,\dots,\Phi^c_n$. But then the $(a_0,\dots,a_n)$'s cover some $(b_0,\dots,b_n)$ that maps outside $S$ under $\Psi$. It follows that $(b_0,\dots,b_n)$ is also a solution to $\Phi^c_0,\dots,\Phi^c_n$. But $\Psi(b_0,\dots,b_n) \notin S$ and is hence not a solution to $c$, which is a contradiction. Thus $\RTc^1_N \nsured \prod_{m=0}^n \RTc^1_{k_m}$, and hence $\RT^1_N \nured \prod_{m=0}^n \RT^1_{k_m}$.

The above strategy may be applied as follows:

\begin{proposition} \label{prop:k=1_upper_bound}
If $N > \max\{\frac{k_0 \cdot k_1}{2},k_0+k_1-1\}$, then $\RT^1_N \nured \RT^1_{k_0} \times \RT^1_{k_1}$.
\end{proposition}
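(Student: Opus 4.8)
The plan is to invoke the strategy described just above with $n = 1$, which reduces the proposition to showing that property ($\ast$) holds for every surjective partial function $\Psi : k_0 \times k_1 \to N$. So I would fix such a $\Psi$ and establish ($\ast$) by a counting argument; the only part of the hypothesis I would need is that $N > k_0 + k_1 - 1$.

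First I would reformulate what it means for ($\ast$) to fail at a fixed nonempty $S \subsetneq N$. If ($\ast$) fails for $S$, there is a set $X \subseteq \dom(\Psi)$ with $\Psi(X) = S$ that covers no tuple mapping outside $S$. Writing $R = \{a_0 : (a_0, a_1) \in X\}$ and $C = \{a_1 : (a_0, a_1) \in X\}$, the tuples covered by $X$ are precisely the members of $R \times C$, so the condition on $X$ says exactly that $\Psi$ maps $(R \times C) \cap \dom(\Psi)$ into $S$; since $X \subseteq (R \times C) \cap \dom(\Psi)$ already has $\Psi$-image $S$, the combinatorial rectangle $R \times C$, with $R \subseteq k_0$ and $C \subseteq k_1$ nonempty, has $\Psi$-image exactly $S$. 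Consequently, if ($\ast$) failed for every nonempty $S \subsetneq N$, then each of the $2^N - 2$ nonempty proper subsets of $N$ would arise as the $\Psi$-image of one of the $(2^{k_0} - 1)(2^{k_1} - 1)$ nonempty combinatorial rectangles, so that
\[
	2^N - 2 \leq (2^{k_0} - 1)(2^{k_1} - 1) = 2^{k_0 + k_1} - 2^{k_0} - 2^{k_1} + 1 .
\]
Since $k_0, k_1 \geq 2$ we have $2^{k_0} + 2^{k_1} \geq 8$, so the right-hand side is at most $2^{k_0 + k_1} - 7$; hence $2^N \leq 2^{k_0 + k_1} - 5 < 2^{k_0 + k_1}$, and so $N \leq k_0 + k_1 - 1$, contradicting the hypothesis. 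Therefore ($\ast$) holds for some nonempty $S \subsetneq N$, and the strategy above yields the conclusion.

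The step I expect to require the most care is the reformulation in the second paragraph: one must check that the covering relation of the definition just before the proposition corresponds exactly to combinatorial rectangles — this is the only place where $n = 1$ is essential, since for larger $n$ the analogous argument produces $(n+1)$-dimensional boxes and a far weaker count — and one must be careful that a rectangle $R \times C$ may contain cells outside $\dom(\Psi)$ which contribute nothing to its $\Psi$-image. Everything else is routine bookkeeping. I would also remark that the remaining hypothesis $N > \tfrac{k_0 k_1}{2}$, though not used in this argument, is exactly what opens up the more combinatorial route: if all $N$ fibers had size at least two then $2N \leq k_0 k_1$, so under that bound some fiber is a singleton, and one can attempt to build a witnessing $S$ around the ``cross'' of $k_0 + k_1 - 1$ cells in its row and column.
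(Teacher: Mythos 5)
Your argument is correct, but it takes a genuinely different route from the paper's, and in fact proves more. The paper uses both halves of the hypothesis: $N > \frac{k_0 k_1}{2}$ forces $\Psi$ to have a singleton fiber $(a_0,a_1)$ by counting, and $N > k_0+k_1-1$ then supplies a second fiber $G$ none of whose pairs shares a coordinate with $(a_0,a_1)$; the witnessing $S$ for $(\ast)$ is the explicit two-element image of $\{(a_0,a_1)\}\cup G$. You instead observe that a failure of $(\ast)$ at $S$ forces $S$ to be the $\Psi$-image of a nonempty combinatorial rectangle, and that there are too few rectangles to realize all $2^N-2$ candidate sets once $N \geq k_0+k_1$. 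The two steps you flag as delicate are both handled correctly: the covered set of $X$ is exactly $\pi_0(X)\times\pi_1(X)$ by the definition of covering (this is where $n=1$ is used), and cells of the rectangle outside $\dom(\Psi)$ cause no harm since the rectangle's image over its domain is still exactly $S$. The trade-off is that your $S$ is purely existential --- some subset that is not a rectangle image, of unknown size --- rather than an explicit two-element set; what you buy is substantial, since you use only $N > k_0+k_1-1$, which by Proposition \ref{prop:lower_bound} is the optimal threshold for two factors. In particular your count settles Proposition \ref{44prop} in one line ($2^8-2 = 254 > 225 = 15^2$) and supersedes Propositions \ref{prop:lousier_bound} and \ref{prop:better_bound} in the case $n=1$, closing the gap the paper leaves open there; as you note, for $n\geq 2$ the analogous box count no longer matches the lower bound, so a gap remains in general.
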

\begin{proof}
By the previous discussion, it suffices to show that ($\ast$) holds. Since $N > \frac{k_0 \cdot k_1}{2}$, by a counting argument, $\Psi$ must have at least one singleton $(a_0,a_1)$. Note that there are $1+(k_0-1)+(k_1-1) = k_0+k_1-1$ many pairs in $k_0 \times k_1$ that share some color with $(a_0,a_1)$. 	But $N > k_0+k_1-1$, so there is some fiber $G$ such that none of its pairs share any colors with $(a_0,a_1)$. In other words, for every pair in $G$, the set containing it and $(a_0,a_1)$ covers a pair outside $G$. Let $S$ be the image of $(a_0,a_1)$ and $G$ under $\Psi$. Then $S$ witnesses that ($\ast$) holds.
\end{proof}

\begin{corollary} \label{cor:cases_up_until_8}
We have that
\begin{align*}
\RT^1_4 &\nured \RT^1_2 \times \RT^1_2, &\RT^1_5 \nured \RT^1_2 \times \RT^1_3, \\
\RT^1_6 &\nured \RT^1_2 \times \RT^1_4, &\RT^1_6 \nured \RT^1_3 \times \RT^1_3, \\
\RT^1_7 &\nured \RT^1_2 \times \RT^1_5, &\RT^1_7 \nured \RT^1_3 \times \RT^1_4, \\
\RT^1_8 &\nured \RT^1_2 \times \RT^1_6, &\RT^1_8 \nured \RT^1_3 \times \RT^1_5.
\end{align*}
\end{corollary}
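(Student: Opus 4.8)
The plan is to obtain each of the eight non-reductions as an immediate instance of Proposition \ref{prop:k=1_upper_bound}. That proposition guarantees $\RT^1_N \nured \RT^1_{k_0} \times \RT^1_{k_1}$ whenever $N > \max\{\tfrac{k_0 k_1}{2}, k_0 + k_1 - 1\}$, so for each line of the corollary it suffices to substitute the relevant values of $k_0$, $k_1$, $N$ and verify that this single numerical inequality is satisfied. No new combinatorics is needed: all of the content resides in Proposition \ref{prop:k=1_upper_bound}, whose proof is the diagonalization via the covering property ($\ast$).

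Concretely, I would run through the cases in order. For $\RT^1_4 \nured \RT^1_2 \times \RT^1_2$ we have $\max\{2,3\}=3<4$; for $\RT^1_5 \nured \RT^1_2 \times \RT^1_3$, $\max\{3,4\}=4<5$; for $\RT^1_6 \nured \RT^1_2 \times \RT^1_4$, $\max\{4,5\}=5<6$; for $\RT^1_6 \nured \RT^1_3 \times \RT^1_3$, $\max\{\tfrac{9}{2},5\}=5<6$; for $\RT^1_7 \nured \RT^1_2 \times \RT^1_5$, $\max\{5,6\}=6<7$; for $\RT^1_7 \nured \RT^1_3 \times \RT^1_4$, $\max\{6,6\}=6<7$; for $\RT^1_8 \nured \RT^1_2 \times \RT^1_6$, $\max\{6,7\}=7<8$; and for $\RT^1_8 \nured \RT^1_3 \times \RT^1_5$, $\max\{\tfrac{15}{2},7\}=\tfrac{15}{2}<8$. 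In each case the strict inequality holds, so Proposition \ref{prop:k=1_upper_bound} applies and yields the stated non-reduction.

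Since the verification is purely arithmetic, there is essentially no obstacle here. The only points worth a moment's attention are the two ``tight'' cases: in $\RT^1_7 \nured \RT^1_3 \times \RT^1_4$ the two quantities $\tfrac{k_0 k_1}{2}$ and $k_0+k_1-1$ coincide at $6$, so one must note that the strict inequality $7>6$ is nonetheless what is required and is met; and in $\RT^1_8 \nured \RT^1_3 \times \RT^1_5$ it is the ``cardinality'' term $\tfrac{15}{2}$ rather than the ``overlap'' term $7$ that is binding, so one should confirm $8 > \tfrac{15}{2}$ rather than merely $8 > 7$. With these checks in place the corollary follows.
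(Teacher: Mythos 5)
Your proposal is correct and is exactly how the paper obtains this corollary: each non-reduction is an immediate instance of Proposition \ref{prop:k=1_upper_bound}, and all eight of your numerical verifications of $N > \max\{\tfrac{k_0 k_1}{2}, k_0+k_1-1\}$ check out.
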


Note that Proposition \ref{prop:lower_bound} implies that $\RT^1_{k_0+k_1-1} \ured \RT^1_{k_0} \times \RT^1_{k_1}$. Hence all of the non-reductions in Corollary \ref{cor:cases_up_until_8} are sharp. We will address the missing case of $\RT^1_8$ and $\RT^1_4 \times \RT^1_4$ in Proposition \ref{44prop}.

We can derive more results using variations of the argument in Proposition \ref{prop:k=1_upper_bound}.

\begin{proposition} \label{prop:lousier_bound}
If \[N > \frac{\max k_m + \prod_{m=0}^n k_m}{2},\] then $\RT^1_N \nured \prod_{m=0}^n \RT^1_{k_m}$.
\end{proposition}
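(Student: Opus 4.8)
The plan is to follow the strategy developed just before Proposition \ref{prop:k=1_upper_bound}: by Corollary \ref{cor:reduction_wlog} it suffices to verify that property $(\ast)$ holds for an arbitrary surjective partial function $\Psi \colon \prod_{m=0}^n k_m \to N$. Relabel the coordinates so that $k_0 = \max_m k_m$, so the hypothesis becomes $2N > k_0 + \prod_{m=0}^n k_m$. First I would count fibers: writing $s$ and $r$ for the numbers of singleton and non-singleton fibers, we have $s + r = N$ and $s + 2r \leq |\dom(\Psi)| \leq \prod_{m=0}^n k_m$. Substituting $s = N - r$ and using $\prod_{m=0}^n k_m < 2N - k_0$ yields $r < N - k_0$, hence $s \geq k_0 + 1 = \max_m k_m + 1 \geq 3$. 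So $\Psi$ has at least $\max_m k_m + 1$ singleton fibers.

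The combinatorial heart of the argument is that among these singletons there must be two tuples $t^a \neq t^b$ differing in at least two coordinates. Suppose not, so that every two of the (at least three) singleton tuples differ in exactly one coordinate. Observe that if $t, t', t''$ pairwise differ in exactly one coordinate, they must all differ pairwise in the \emph{same} coordinate $i$: if $t$ and $t'$ differ in coordinate $i$ while $t$ and $t''$ differ in coordinate $j \neq i$, then $t'$ and $t''$ differ in both $i$ and $j$. Hence all of the singleton tuples agree outside a single coordinate $i$ and take pairwise distinct values in coordinate $i$, so there are at most $k_i \leq \max_m k_m$ of them, contradicting the count above. Fix such $t^a \neq t^b$, and let $I$ be the set of coordinates on which they differ, so $|I| \geq 2$.

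Now set $S = \{\Psi(t^a), \Psi(t^b)\}$. Since $t^a$ and $t^b$ belong to distinct singleton fibers, $|S| = 2$ and $\Psi^{-1}(S) = \{t^a, t^b\}$; also $N \geq 4$ under the hypothesis, so $S \subsetneq N$. Any set $T$ of tuples with $\Psi(T) = S$ must contain $t^a$ and $t^b$ (the unique preimages of the two elements of $S$) and be contained in $\Psi^{-1}(S)$, so $T = \{t^a, t^b\}$. Finally, for each nonempty proper subset $J \subsetneq I$, the tuple agreeing with $t^b$ on the coordinates in $J$ and with $t^a$ on all other coordinates is covered by $\{t^a, t^b\}$ yet differs from both $t^a$ and $t^b$; since $|I| \geq 2$ there is at least one such tuple, and it lies outside $\Psi^{-1}(S)$. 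This verifies $(\ast)$, and the strategy before Proposition \ref{prop:k=1_upper_bound} then gives $\RTc^1_N \nsured \prod_{m=0}^n \RTc^1_{k_m}$, hence $\RT^1_N \nured \prod_{m=0}^n \RT^1_{k_m}$.

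I expect the main obstacle to be isolating the right combinatorial statement: recognizing that, unlike in Proposition \ref{prop:k=1_upper_bound}, one should not try to make a whole fiber avoid the coordinates of a chosen singleton, but instead exploit pairs of singleton fibers that are far apart in Hamming distance — so that their trivial preimages make the covering condition automatic — and then tuning the fiber count so that the hypothesis on $N$ guarantees enough such singletons. Once $(\ast)$ is in hand, the remainder is the boilerplate already developed in the text.
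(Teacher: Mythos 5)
Your proposal is correct and follows essentially the same route as the paper: count that $\Psi$ has at least $1+\max_m k_m$ singleton fibers, observe that among these there must be two tuples differing in at least two coordinates (tuples pairwise at Hamming distance one all differ in a common coordinate, so there are at most $\max_m k_m$ of them), and take $S$ to be the image of two such singletons to verify $(\ast)$. The only difference is that you spell out the counting and the Hamming-distance argument that the paper leaves implicit.
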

\begin{proof}
As before, we show that ($\ast$) holds. By a counting argument, $\Psi$ must have at least $1+\max k_m$ many $(a_0,\dots,a_n)$ that are singletons. Among these singletons, there must be two of them that differ in at least two entries, i.e., the set consisting of these two singletons covers a new tuple of colors. We can then take $S$ to be the image of two such singletons under $\Psi$.
\end{proof}

We can improve on this bound asymptotically, but even then this result seems to be far from optimal.

\begin{proposition} \label{prop:better_bound}
If \[N > \max\left\{\frac{2+\prod_{m=0}^n k_m}{2},\ \max k_m - 1 + \frac{\prod_{m=0}^n k_m}{3}\right\},\] then $\RT^1_N \nured \prod_{m=0}^n \RT^1_{k_m}$.
\end{proposition}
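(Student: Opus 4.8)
The plan is to follow the template already used for Propositions~\ref{prop:k=1_upper_bound} and~\ref{prop:lousier_bound}: by Corollary~\ref{cor:reduction_wlog} and the discussion preceding Proposition~\ref{prop:k=1_upper_bound}, it suffices to verify property~($\ast$) for an arbitrary surjective partial function $\Psi \colon \prod_{m=0}^n k_m \to N$. Write $P = \prod_{m=0}^n k_m$ and $K = \max_m k_m$. First I would reformulate~($\ast$) in terms of \emph{transversals}: since any set of tuples with image exactly $S$ contains a transversal of the family $\{\Psi^{-1}(i) : i \in S\}$, and ``covers'' is monotone under supersets, it is enough to exhibit a nonempty $S \subsetneq N$ such that every transversal of $\{\Psi^{-1}(i) : i \in S\}$ covers some tuple mapping outside $S$. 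I would then record the counting estimates. Since the fibers partition a subset of $\prod_m k_m$ into $N$ classes, $\sum_{\text{fibers } f}(|f|-1) \leq P - N$; writing $n_1,n_2$ for the numbers of fibers of size $1$ and $2$ and $n_{\geq 3}$ for those of size $\geq 3$, this gives $n_1 \geq 2N - P$ and (eliminating $n_{\geq 3}$) $n_2 \geq 3N - P - 2n_1$. In particular the first hypothesis $N > (2+P)/2$ forces $n_1 \geq 3$, and $N \geq 4$.

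Next I would run a dichotomy on the singleton fibers. If two singleton fibers have underlying tuples $a \neq b$ differing in at least two coordinates, take $S = \{\Psi(a),\Psi(b)\}$, a proper nonempty subset of $N$: the unique transversal is $\{a,b\}$, it covers at least four tuples, and every covered tuple other than $a$ and $b$ maps outside $S$ because $a$ and $b$ are singletons, so~($\ast$) holds. Otherwise, the (at least three) singleton tuples are pairwise at Hamming distance exactly $1$, which forces them to agree outside a single coordinate $m^*$ and to be pairwise distinct only there; say they are $(\vec v, x_1),\dots,(\vec v, x_t)$ with $t \leq k_{m^*} \leq K$, and note $K \geq t \geq 3$ in this case.

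The substance of the argument is this last case. For a singleton $a_i$ and a size-$2$ fiber $G$, I would observe that $S = \{\Psi(a_i),\Psi(G)\}$ has property~($\ast$) \emph{unless} $G$ contains a tuple at Hamming distance at most $1$ from $a_i$: when every $g \in G$ satisfies $d(a_i,g) \geq 2$, the transversal $\{a_i,g\}$ covers at least four tuples, and among the two-or-more covered tuples distinct from $a_i$ and $g$ at least one is also distinct from the remaining element of $G$ and hence maps outside $S$. Call $G$ \emph{bad for $a_i$} otherwise. The key combinatorial claim is that a size-$2$ fiber bad for \emph{every} $a_i$ must contain one of the $k_{m^*}-t$ ``slab'' tuples $(\vec v, y)$ with $y \notin \{x_1,\dots,x_t\}$: a distance-$1$ neighbour of $a_i$ that is not such a slab tuple differs from $a_i$ in a coordinate other than $m^*$, hence lies at distance $\geq 2$ from every other $a_{i'}$, so a size-$2$ fiber avoiding all slab tuples is bad for at most two singletons, which is $< t$. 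Hence at most $k_{m^*}-t$ size-$2$ fibers are bad for all singletons, so at least $n_2-(k_{m^*}-t)$ of them are good for some singleton. Finally I would check that the second hypothesis makes this positive: $N > K-1+P/3$ gives $3N > 3(K-1)+P \geq P + k_{m^*}+t$ (using $3(K-1)\geq 2K \geq k_{m^*}+t$, valid as $K\geq 3$), and together with $n_2 \geq 3N - P - 2t$ this yields $n_2-(k_{m^*}-t) \geq 3N - P - t - k_{m^*} > 0$. Choosing such a $G$ and a singleton $a_i$ for which $G$ is not bad, $S=\{\Psi(a_i),\Psi(G)\}$ witnesses~($\ast$), and the proof is complete.

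The step I expect to be the main obstacle is the combinatorial claim that a size-$2$ fiber bad for all singletons must use a slab tuple: getting the Hamming-distance case analysis right, and in particular separating the role of the coordinate $m^*$ from that of the others, is exactly what produces the $P/3$ term instead of a cruder bound involving $\sum_m k_m$. A secondary point requiring care is the bookkeeping for partial $\Psi$ (so one only has $\sum_f |f| \leq P$) together with the degenerate configurations $K=2$ (which makes the ``lined-up'' case impossible, forcing the first case) and $t=k_{m^*}$ (where there are no slab tuples at all) --- these are precisely where the two halves of the $\max$ in the hypothesis are each used.
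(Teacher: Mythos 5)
Your proof is correct, and its skeleton matches the paper's: reduce to verifying ($\ast$) via Corollary \ref{cor:reduction_wlog}, use $N > (2+\prod_m k_m)/2$ to extract at least three singleton fibers, split on whether two singleton tuples differ in at least two coordinates (that case handled exactly as in Proposition \ref{prop:lousier_bound}), and in the remaining ``aligned'' case pair a suitable fiber with a suitable singleton. Where you genuinely diverge is in how the aligned case is closed. The paper counts fibers of size $<l$ (with $l$ the number of singletons), shows there are at least $k_{m^*}+1$ of them, hence finds one, $U$, disjoint from the \emph{entire} line of $k_{m^*}$ tuples through the singletons, and pairs it with a singleton whose $m^*$-coordinate does not occur in $U$; the witness tuple is then written down explicitly by substituting that coordinate into the singleton. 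You instead count only size-two fibers (via $n_2 \geq 3N - P - 2n_1$), and your key lemma is the observation that a distance-one neighbour of a singleton which is not one of the $k_{m^*}-t$ ``slab'' tuples is a neighbour of no other singleton, so a size-two fiber avoiding the slab tuples can be bad for at most two of the $t\geq 3$ singletons. This lets you exclude only the slab tuples rather than the whole line, at the price of a slightly less explicit witness (``some mixed tuple distinct from the second element of $G$''). Both routes spend the hypothesis $N > \max_m k_m - 1 + \prod_m k_m/3$ on an inessentially different piece of arithmetic and land on the same bound, so neither argument is stronger than the other; your bookkeeping ($n_1 \geq 2N-P$, $n_2 \geq 3N-P-2n_1$, the $K\geq 3$ and $t=k_{m^*}$ edge cases) all checks out.
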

\begin{proof}
As before, we show that ($\ast$) holds. Since $N > \frac{2+\prod_{m=0}^n k_m}{2}$, the reduction $\Psi$ must have at least three singletons.

\underline{Case 1.} If there are two singletons that differ in at least two entries, then we may take $S$ to be the image of two such singletons under $\Psi$, as in Proposition \ref{prop:lousier_bound}.

\underline{Case 2.} Otherwise, all of the singletons share exactly one common entry. So there are some $0 \leq m \leq n$ and $3 \leq l \leq k_m$ such that there are exactly $l$ many singletons and all of them are of the form $(a_0,\dots,a_{m-1},b,a_{m+1},\dots,a_n)$, where $b < k_m$.

We claim that there are at least $k_m+1$ many fibers of size $< l$. If not, by a counting argument, there are at least
\begin{align*}
&1 \cdot l + 2 \cdot (k_m-l) + l \cdot (N-k_m) \\
= \; &l + 2k_m - 2l + lN - lk_m \\
> \; &l\left(\max k_m - 1 + \frac{\prod_{m=0}^n k_m}{3}\right) + 2k_m - l - lk_m \\
\geq \; &lk_m - l + \prod_{m=0}^n k_m + 2k_m - l - lk_m \\
\geq \; &\prod_{m=0}^n k_m
\end{align*}
many tuples, which is a contradiction.

By the claim, there is a fiber $U$ of size $< l$ that does not contain any tuple of the form $(a_0,\dots,a_{m-1},b,a_{m+1},\dots,a_n)$. Since $|U| < l$, there is a singleton $(a_0,\dots,a_{m-1},b,a_{m+1},\dots,a_n)$ such that $b$ does not appear in any tuple in $U$. Then for any tuple in $U$, the set containing it and $(a_0,\dots,a_{m-1},b,a_{m+1},\dots,a_n)$ covers some tuple outside $U$, so we can take $S$ to be the image of $U$ and said singleton.
\end{proof}

The lower bound in Proposition \ref{prop:lower_bound} is, in general, much smaller than the upper bounds in Propositions \ref{prop:k=1_upper_bound}, \ref{prop:lousier_bound}, and \ref{prop:better_bound}. Observe that in all of our proofs, the sets $S$ consist of two elements, at least one of which is the image of a singleton under $\Psi$. However, $\Psi$ may not have any singletons, for example in a hypothetical reduction witnessing that $\RT^1_8 \ured \RT^1_4 \times \RT^1_4$. Also, there may not be any $S$ that has exactly two elements and satisfies ($\ast$), e.g., consider $\Psi: 4 \times 4 \to 8$ as represented in the grid below. Here $\Psi$ maps $(i,j) \in 4 \times 4$ to the number in the $(i,j)^{\text{th}}$ position.
\[ \begin{matrix}
0 & 3 & 2 & 6 \\
0 & 4 & 5 & 7 \\
1 & 2 & 3 & 7 \\
1 & 4 & 5 & 6
\end{matrix} \]
One can check that for any $c,d < 8$, there is a point labeled $c$ that shares a row or column with a point labeled $d$. That means that $S = \{c,d\}$ fails to satisfy ($\ast$).

Therefore, new techniques will be required to close the gap between our lower and upper bounds. We conclude this section by giving an ad hoc proof that $\RT^1_8 \nured \RT^1_4 \times \RT^1_4$, which is the smallest case not resolved by Corollary \ref{cor:cases_up_until_8}. In order to do so, we will show that there exists some $S$ that satisfies ($\ast$) and has exactly \emph{three} elements.

Before specializing to the case of $\RT^1_8 \nured \RT^1_4 \times \RT^1_4$, we consider a more general context: let $k_0,k_1 \geq 2$ and fix a surjective partial function $\Psi: k_0 \times k_1 \to N$ (i.e., a potential backward reduction for $\RTc^1_N \sured \RTc^1_{k_0} \times \RTc^1_{k_1}$). We say that a collection of three fibers is \emph{bad} if its image under $\Psi$ does not satisfy ($\ast$). We can characterize the bad collections of three fibers:

\begin{lemma} \label{lem:bad_three_fiber_characterization}
Let $k_0,k_1 \geq 2$ and let $\Psi: k_0 \times k_1 \to N$ be a surjective partial function. A collection of three fibers is bad if and only if their union contains either:
\begin{enumerate}
	\item three pairs in a row/column (e.g., $(a,b_0)$, $(a,b_1)$, $(a,b_2$)), with one pair from each of the three fibers;
	\item four pairs that form a rectangle (i.e., $(a_0,b_0)$, $(a_0,b_1)$, $(a_1,b_0)$, $(a_1,b_1)$), with at least one pair from each of the three fibers.
\end{enumerate}
\end{lemma}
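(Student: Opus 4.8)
The plan is to unfold ``bad'' into a statement about combinatorial rectangles in the grid $k_0\times k_1$, and then prove that statement by a short case analysis on a $3$-coloured grid. Throughout I write $\pi_0,\pi_1$ for the two coordinate projections on $k_0\times k_1$, $F_1,F_2,F_3$ for the three (distinct) fibres in question, and $S=\{i_1,i_2,i_3\}$ for their common image under $\Psi$, so that $F_1\cup F_2\cup F_3=\Psi^{-1}(S)$; the statement is only nonvacuous when $N\geq 4$, so that $S\subsetneq N$.

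First I would translate the definitions. A set $T$ of pairs covers precisely the pairs in $\pi_0(T)\times\pi_1(T)$, so for a $T$ with $\Psi[T]=S$, saying that $T$ covers nothing mapping outside $S$ is exactly saying $\pi_0(T)\times\pi_1(T)\subseteq\Psi^{-1}(S)=F_1\cup F_2\cup F_3$. Setting $A=\pi_0(T)$, $B=\pi_1(T)$, and conversely taking $T=A\times B$ for a given rectangle, one sees that $\{F_1,F_2,F_3\}$ is bad if and only if there is a combinatorial rectangle $A\times B\subseteq F_1\cup F_2\cup F_3$ (with $A\subseteq k_0$, $B\subseteq k_1$ nonempty) meeting each of $F_1,F_2,F_3$. (If an infinite witness $T$ exists one may pass to the finite rectangle $\pi_0(T)\times\pi_1(T)$, so finiteness is automatic.)

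Next I would observe that clauses (1) and (2) are exactly the two minimal such rectangles: three pairs in a common line, one from each fibre, form a $1\times 3$ (or $3\times 1$) rectangle inside $F_1\cup F_2\cup F_3$ meeting each fibre; and four pairs forming a rectangle, with each fibre represented, form a $2\times 2$ such rectangle. Hence the ``if'' direction is immediate. For ``only if'' it remains to prove: if $A\times B\subseteq F_1\cup F_2\cup F_3$ is a rectangle meeting all three fibres, then $F_1\cup F_2\cup F_3$ contains a configuration of type (1) or (2). Since every cell of $A\times B$ lies in some $F_j$, this reduces to the following fact about grids, which I regard as the only real content of the lemma: \emph{any grid with at least one row and one column, whose cells are coloured with three colours all of which occur, contains a line (row or column) meeting all three colour classes or a $2\times 2$ subgrid meeting all three colour classes.}

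To prove this I would argue as follows. If the grid has only one row or only one column it is itself a line with all three colours. Otherwise it has $\geq 2$ rows and $\geq 2$ columns, and first one checks that some two cells of distinct colours share a line: if not, take $x$ of colour $1$ and $y$ of colour $2$; they lie in no common line, but then the cell $z$ at the intersection of $x$'s row and $y$'s column must have colour $1$ (else $z$ and $x$ contradict this in $x$'s row), and then $z$ and $y$ contradict it in $y$'s column. After transposing if necessary, take such a pair in a common row, say $(a,b_1)$ of colour $1$ and $(a,b_2)$ of colour $2$ with $b_1\neq b_2$, and pick a cell $(a',b')$ of colour $3$. If $a'=a$, or if $(a,b')$ has colour $3$, then row $a$ meets all three colour classes and we have type (1); otherwise $a'\neq a$ and $(a,b')$ has colour $1$ or $2$, say colour $1$ (the other case is symmetric, swapping $b_1,b_2$), so $b'\neq b_2$, and the $2\times 2$ subgrid on rows $\{a,a'\}$ and columns $\{b_2,b'\}$ contains cells of colours $2$, $1$, $3$ at $(a,b_2)$, $(a,b')$, $(a',b')$, giving type (2). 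The only thing to watch is that the chosen rows and columns are genuinely distinct so that one has an honest $2\times 2$; this is forced by the colour inequalities, and everything else is routine bookkeeping with the definitions of ``cover'' and $(\ast)$.
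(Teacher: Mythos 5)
Your proof is correct, but it takes a genuinely different route from the paper's. You first reformulate badness extensionally: since a set $T$ covers exactly $\pi_0(T)\times\pi_1(T)$, a collection of three fibers is bad if and only if some nonempty combinatorial rectangle $A\times B$ is contained in $F_1\cup F_2\cup F_3$ and meets all three fibers. This turns the ``only if'' direction into a self-contained fact about grids whose cells are $3$-colored with all colors occurring: such a grid contains a tri-chromatic line or a tri-chromatic $2\times 2$ subgrid, which you prove in a few lines (two unlike-colored cells must share a line, and then a cell of the third color either completes that line or, via the forced corner cell, completes a rectangle). The paper instead normalizes the badness witness \emph{down} to three pairs, one per fiber, and cases on how many of them share a line (Cases 1--3, with subcases 3a--3c and the case trees in Figures 2 and 3), repeatedly using the covering condition to force individual cells into the union of the fibers. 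Your maximal-witness viewpoint is what collapses that bookkeeping: passing to the full rectangle $A\times B$ makes \emph{every} relevant cell colored, so no cells need to be ``forced in'' one at a time, and the residual combinatorial statement is cleaner and arguably of independent interest. The paper's version has the mild advantage of working directly with the three representative pairs in the form in which the lemma is used, but at the cost of a substantially longer case analysis. One small definitional point that both you and the paper gloss over identically: for partial $\Psi$ one must read ``maps outside $S$'' as ``lies outside $\Psi^{-1}(S)$''; since $\Psi$ is total in the only application (Proposition \ref{44prop}), nothing is at stake.
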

\begin{proof}
($\Leftarrow$). If (1) holds, the three pairs in question do not cover any new pair. If (2) holds, pick three out of the four pairs such that one pair from each of the three fibers is picked. Then these three pairs cover exactly one other pair (the fourth). But the fourth pair is already contained in the union of the three fibers.

($\Rightarrow$). Suppose that we have a bad collection of three fibers. Without loss of generality, we may pick one pair $(a_i,b_i)$ from each fiber such that the three pairs $(a_0,b_0)$, $(a_1,b_1)$, and $(a_2,b_2)$ witness badness.

\underline{Case 1.} $(a_0,b_0)$, $(a_1,b_1)$, and $(a_2,b_2)$ lie in the same row or column. Then they satisfy (1).

\underline{Case 2.} Two out of the three pairs, say $(a_0,b_0)$ and $(a_1,b_1)$, lie in the same row or column (i.e., $a_0 = a_1$ or $b_0 = b_1$). Without loss of generality, suppose that $b_0 = b_1$. Note that $(a_0,b_0)$, $(a_0,b_1)$, and $(a_2,b_2)$ cover $(a_2,b_0)$, $(a_0,b_2)$, and $(a_2,b_1)$. Therefore by badness, the latter three pairs lie in the union of the three fibers.

If $(a_0,b_0)$, $(a_1,b_1)$, and $(a_2,b_2)$ are vertices of a rectangle (i.e., $b_2 = b_0$ or $b_2 = b_1$), then we satisfy (2). Otherwise, we consider cases depending on which fiber contains $(a_2,b_0)$. In all cases, we satisfy either (1) or (2). See Figure \ref{fig:bad_three_fiber_characterization_case_2} for an illustration.


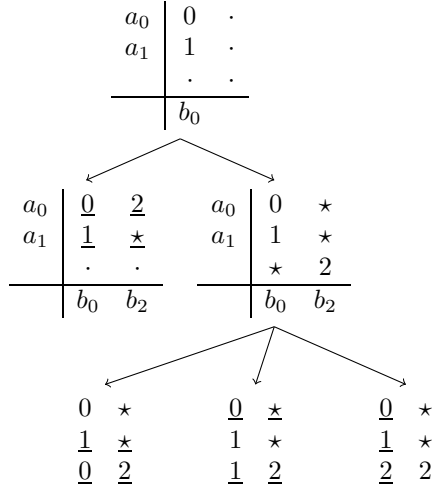
\begin{figure}
\centering
\begin{tikzpicture}
	\node (2) at (0,0) {
$\begin{array}{c|c c}
a_0 & 0 & \cdot \\
a_1 & 1 & \cdot \\
 & \cdot & \cdot \\
\hline & b_0 &
\end{array}$};
	\node (2a) at (-1.25,-2.5) {
$\begin{array}{c|cc}
a_0 & \underline{0} & \underline{2} \\
a_1 & \underline{1} & \underline{\star} \\
& \cdot & \cdot \\
\hline & b_0 & b_2
\end{array}$};
	\draw[->] (2.south) -- (2a.north);
	\node (2b) at (1.25,-2.5) {
$\begin{array}{c|cc}
a_0 & 0 & \star \\
a_1 & 1 & \star \\
& \star & 2 \\
\hline & b_0 & b_2
\end{array}$};
	\draw[->] (2.south) -- (2b.north);
	\node (2ba) at (-1,-5) {
$\begin{matrix}
0 & \star \\
\underline{1} & \underline{\star} \\
\underline{0} & \underline{2}
\end{matrix}$};
	\draw[->] (2b.south) -- (2ba.north);
	\node (2bb) at (1,-5) {
$\begin{matrix}
\underline{0} & \underline{\star} \\
1 & \star \\
\underline{1} & \underline{2}
\end{matrix}$};
	\draw[->] (2b.south) -- (2bb.north);
	\node (2bc) at (3,-5) {
$\begin{matrix}
\underline{0} & \star \\
\underline{1} & \star \\
\underline{2} & 2
\end{matrix}$};
	\draw[->] (2b.south) -- (2bc.north);
\end{tikzpicture}
\caption{Case 2 in Lemma \ref{lem:bad_three_fiber_characterization}, assuming that $b_0 = b_1$. In the array on the top level, $0$ lies in position $(a_0,b_0)$ and $1$ lies in position $(a_1,b_0)$, meaning that $\Psi(a_0,b_0) = 0$ and $\Psi(a_1,b_0) = 1$. We have yet to label position $(a_2,b_2)$. The middle level represents cases depending on whether $a_2$ equals some $a_i$, or not. If a star lies in position $(a,b)$, then $(a,b)$ is known (by badness) to lie in the union of the bad collection of three fibers. Sets of pairs that satisfy (1) or (2) are underlined. The bottom level represents cases depending on which of the three fibers contains $(a_2,b_0)$. For example, in the array on the bottom right, $2$ lies in positions $(a_2,b_0)$ and $(a_2,b_2)$, meaning that $\Psi(a_2,b_0) = \Psi(a_2,b_2) = 2$ and hence $(a_2,b_0)$ and $(a_2,b_2)$ lie in the same fiber. Then $(a_0,b_0)$, $(a_1,b_0)$, and $(a_2,b_0)$ lie in a column, satisfying (1).}
\label{fig:bad_three_fiber_characterization_case_2}
\end{figure}

\underline{Case 3.} None of the three pairs lie in the same row or column. Note that by badness, $(a_0,b_1)$, $(a_1,b_0)$, $(a_0,b_2)$, $(a_2,b_0)$, $(a_1,b_2)$, and $(a_2,b_1)$ all lie in the union of the three fibers. We consider cases depending on which fiber contains $(a_2,b_1)$. See Figure \ref{fig:bad_three_fiber_characterization_case_3} for an illustration.

\begin{figure}
\centering
\begin{tikzpicture}
	\node (3) at (0,0) {
$\begin{array}{c | c c c}
a_0 & 0 & \star & \star \\
a_2 & \star & \star & 2 \\
a_1 & \star & 1 & \star \\
\hline
& b_0 & b_1 & b_2	
\end{array}$};
	\node (3a) at (-5,-2.5) {
$\begin{matrix}
0 & \star & \star \\
\star & \underline{0} & \underline{2} \\
\star & \underline{1} & \underline{\star}
\end{matrix}$};
	\draw[->] (3.south) -- (3a.north);
	\node (3b) at (-3,-2.5) {
$\begin{matrix}
0 & \star & \star \\
\star & 1 & 2 \\
\star & 1 & \star
\end{matrix}$};
	\draw[->] (3.south) -- (3b.north);
	\node (3ba) at (-5,-5) {
$\begin{matrix}
0 & \star & \star \\
\underline{0} & \underline{1} & \underline{2} \\
\star & 1 & \star
\end{matrix}$};
	\draw[->] (3b.south) -- (3ba.north);
	\node (3bb) at (-3,-5) {
$\begin{matrix}
\underline{0} & \star & \underline{\star} \\
\underline{1} & 1 & \underline{2} \\
\star & 1 & \star
\end{matrix}$};
	\draw[->] (3b.south) -- (3bb.north);
	\node (3bc) at (-1,-5) {
$\begin{matrix}
\underline{0} & \underline{\star} & \star \\
\underline{2} & \underline{1} & 2 \\
\star & 1 & \star
\end{matrix}$};
	\draw[->] (3b.south) -- (3bc.north);
	\node (3c) at (3,-2.5) {
$\begin{matrix}
0 & \star & \star \\
\star & 2 & 2 \\
\star & 1 & \star
\end{matrix}$};
	\draw[->] (3.south) -- (3c.north);
	\node (3ca) at (1,-5) {
$\begin{matrix}
0 & \underline{0} & \star \\
\star & \underline{2} & 2 \\
\star & \underline{1} & \star
\end{matrix}$};
	\draw[->] (3c.south) -- (3ca.north);
	\node (3cb) at (3,-5) {
$\begin{matrix}
\underline{0} & \underline{1} & \star \\
\underline{\star} & \underline{2} & 2 \\
\star & 1	 & \star
\end{matrix}$};
	\draw[->] (3c.south) -- (3cb.north);
	\node (3cc) at (5,-5) {
$\begin{matrix}
\underline{0} & \underline{2} & \star \\
\star & 2 & 2 \\
\underline{\star} & \underline{1} & \star
\end{matrix}$};
	\draw[->] (3c.south) -- (3cc.north);
\end{tikzpicture}
\caption{Case 3 in Lemma \ref{lem:bad_three_fiber_characterization}. In the array on the top level, for each $i < 3$, the number $i$ lies in position $(a_i,b_i)$, meaning that $\Psi(a_i,b_i) = i$. On the middle level, we have Case 3a on the left, followed by Cases 3b and 3c. On the bottom level, we have various subcases. For example, in the array on the bottom right, $0$ lies in position $(a_0,b_0)$, $2$ lies in position $(a_0,b_1)$, and $1$ lies in position $(a_1,b_1)$. Together with $(a_1,b_0)$, they form a rectangle satisfying (2).}
\label{fig:bad_three_fiber_characterization_case_3}
\end{figure}
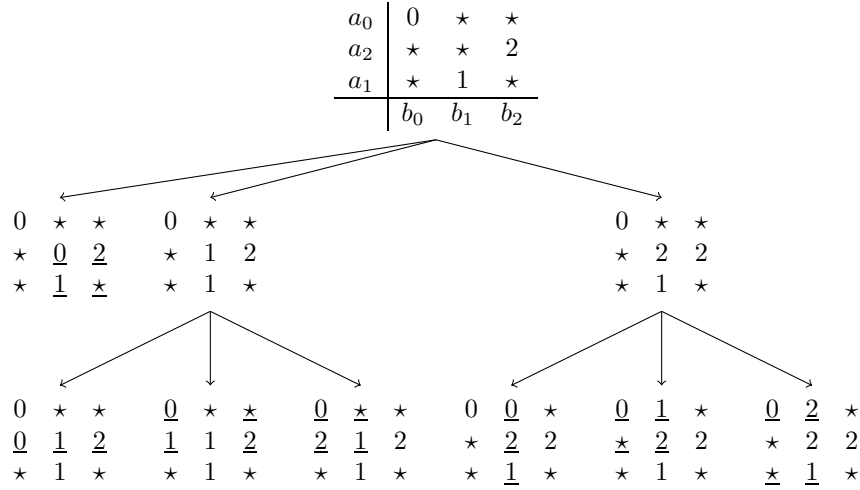

\underline{Case 3a.} $(a_2,b_1)$ and $(a_0,b_0)$ lie in the same fiber. Then we satisfy (2): $(a_1,b_2)$, $(a_1,b_1)$, $(a_2,b_1)$, and $(a_2,b_2)$ form a rectangle with at least one pair from each of the three fibers.

\underline{Case 3b.} $(a_2,b_1)$ and $(a_1,b_1)$ lie in the same fiber. Then we consider cases depending on which fiber contains $(a_2,b_0)$. In all cases, we satisfy either (1) or (2).


\underline{Case 3c.} $(a_2,b_1)$ and $(a_2,b_2)$ lie in the same fiber. We consider cases depending on which fiber contains $(a_0,b_1)$. The argument is symmetric to Case 3b.
\end{proof}

\begin{proposition}\label{44prop}
$\RT^1_8 \nured \RT^1_4 \times \RT^1_4$.
\end{proposition}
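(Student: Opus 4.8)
The plan is to argue exactly as for the earlier non-reductions in this section, but this time extracting a \emph{three}-element $S$ rather than a two-element one. Suppose toward a contradiction that $\RT^1_8 \ured \RT^1_4 \times \RT^1_4$. By Corollary \ref{cor:reduction_wlog} the reduction may be assumed to have the special form considered there: forward functionals $\Phi_0, \Phi_1$ together with a backward functional that is a surjective partial map $\Psi \colon 4 \times 4 \to 8$. By the argument in the discussion preceding Proposition \ref{prop:k=1_upper_bound}, it then suffices to show that $\Psi$ has property $(\ast)$. Since $(\ast)$ can fail for every two-element $S$ (as the grid displayed after Proposition \ref{prop:better_bound} shows), I will instead produce a three-element $S$ satisfying $(\ast)$ — equivalently, by definition, a collection of three fibers of $\Psi$ that is not \emph{bad}. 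So assume toward a contradiction that all $\binom{8}{3} = 56$ triples of fibers are bad, and bring Lemma \ref{lem:bad_three_fiber_characterization} to bear.

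By that lemma, each bad triple $\{i,j,k\}$ is witnessed inside $\Psi^{-1}(\{i,j,k\})$ by either a \emph{rainbow line} (three cells in a common row or column carrying the three distinct colors $i,j,k$) or a \emph{tricolored rectangle} (four cells forming an axis-parallel rectangle meeting all of $\Psi^{-1}(i),\Psi^{-1}(j),\Psi^{-1}(k)$, hence carrying exactly those three colors); and each such witness determines its triple uniquely. The first reduction I would make is a case analysis on the number of singleton fibers of $\Psi$ (the relevant parameter, since $\sum_i|\Psi^{-1}(i)| = 16$ over eight nonempty fibers, so the only profiles are $2^8$, $1,3,2^6$, and profiles with at least two singletons). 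When $\Psi$ has two singletons $p,q$, every triple through both is tightly constrained: a rainbow line containing both is impossible unless $p,q$ are collinear, and the only rectangle containing both is the one they span (when $p,q$ differ in both coordinates) or one built on the two lines through $p$ and $q$ (when they are collinear). A short count then shows that at most five of the six triples through $\{\Psi(p),\Psi(q)\}$ can be bad, so one of them is not — done in this case.

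The main obstacle is the residual case of at most one singleton, i.e.\ the profiles $2^8$ (the situation of the grid after Proposition \ref{prop:better_bound}) and $1,3,2^6$. Here I would classify the size-$2$ fibers as \emph{horizontal}, \emph{vertical}, or \emph{diagonal} according to whether their two cells share a row, share a column, or neither, using that a diagonal-free fiber of size $\geq 2$ is contained in a single line. Two sub-claims drive the analysis: (i) three fibers lying in three distinct rows can be bad only through a single column meeting all three, so an inclusion–exclusion bound on their column-sets (each of size at most the fiber size, hence $\leq 2$) forces many such triples to be non-bad; and symmetrically for columns; (ii) a diagonal fiber, or several horizontal/vertical fibers crowded into one line, sharply restricts how their cells can co-occur in a rectangle with the cells of a third fiber. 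Combining these over the possible distributions $(h,v,d)$ with $h+v+d=8$ — a finite but fiddly search — yields in every case three fibers whose six cells contain neither a rainbow line nor a tricolored rectangle, contradicting the assumption that all $56$ triples are bad; this bookkeeping, rather than any one clever step, is the delicate part.
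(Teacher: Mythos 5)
Your setup is exactly the paper's: reduce to the special form of Corollary \ref{cor:reduction_wlog}, aim for a three-element $S$ satisfying $(\ast)$, i.e.\ a non-bad collection of three fibers, and use Lemma \ref{lem:bad_three_fiber_characterization} to recognize bad triples. The cases with a singleton are also fine (indeed easier than you make them: as the paper notes, the second half of the proof of Proposition \ref{prop:k=1_upper_bound} already applies whenever $\Psi$ has even one singleton, since $8 > 4+4-1$, so a two-element $S$ suffices there and your separate two-singleton count is unnecessary).

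The genuine gap is in what you yourself call the main obstacle: the profile in which all eight fibers have size two. There your argument consists of two sub-claims and a ``finite but fiddly search'' over distributions $(h,v,d)$, none of which is carried out. Sub-claim (i) as stated does not hold up: it addresses only the rainbow-line witnesses and ignores tricolored rectangles, and ``three fibers lying in three distinct rows'' is not even well-posed for diagonal fibers, which occupy two rows and two columns each. Since in this case \emph{every} triple must be checked against both witnesses of badness, the search you defer is precisely the content of the proof, and nothing you have written pins it down. The paper closes this with a concrete count you should aim for: letting $k$ be the number of size-two fibers contained in a single row or column, at most $3k + (8-k) = 2k+8$ triples are bad via a rectangle (each such rectangle meets at most three fibers and is anchored either on one of the $k$ aligned fibers or on a diagonal one), and at most $(k-2l)\cdot 1 + (8+l-k)\cdot 4 = 32-3k+2l \leq 32-2k$ are bad via a line (where $l$ counts lines containing two whole fibers). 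The total is at most $40 < \binom{8}{3} = 56$, so a non-bad triple exists. Without some such explicit bound, your proof does not go through.
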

\begin{proof}
Towards a contradiction, fix forward functionals $\Phi_0$, $\Phi_1$ and a surjective partial function $\Psi: 4 \times 4 \to 8$ witnessing that $\RTc^1_8 \sured \RTc^1_4 \times \RTc^1_4$. If $\Psi$ has any singletons, we can derive a contradiction using the proof of Proposition \ref{prop:k=1_upper_bound}. Hence we assume that $\Psi$ has no singletons. There are sixteen pairs in $4 \times 4$, so $\Psi$ must be total, and all of the eight fibers in $\Psi$ must contain exactly two pairs each.

As discussed previously, we derive a contradiction by producing a set $S$ that satisfies ($\ast$) and consists of three elements. In other words, we show that there is a collection of three fibers that is not bad. To that end, we give an upper bound for the number of bad collections of three fibers. Since each fiber contains exactly two pairs, it is either contained in a row or column, or lies in diagonal position. Let $k$ be the number of fibers that are contained in some row or column.

First, we give an upper bound for the number of collections that satisfy (2) in Lemma \ref{lem:bad_three_fiber_characterization}. It suffices to give an upper bound for the number of rectangles that intersect at most three fibers. Such rectangles have two possible forms, and we count those cases separately.

\underline{Case 1.} The rectangle contains at least one of those $k$ fibers. There are at most $(4-1)k = 3k$ many such rectangles.

\underline{Case 2.} The rectangle contains at least one fiber in diagonal position. There are at most $8-k$ many such rectangles.

Therefore, there are at most $3k + (8-k) = 2k + 8$ many rectangles that intersect at most three fibers. So there are at most $2k+8$ many collections that satisfy (2).

Next, we give an upper bound for the number of collections that satisfy (1) in Lemma \ref{lem:bad_three_fiber_characterization}.

\underline{Case 1.} If a row/column contains two fibers (and hence nothing else), then said row/column does not contribute to our upper bound. Let $l$ be the number of such rows and columns. Note that $2l \leq k$.

\underline{Case 2.} If a row/column contains one fiber, as well as two other vertices from two different fibers, then said row/column contributes one collection to our upper bound. There are $k-2l$ many such rows/columns.

\underline{Case 3.} Finally, the remaining $8+l-k$ many rows or columns contribute $\binom{4}{3} = 4$ collections each.

Therefore, there are at most
\[ l \cdot 0 + (k-2l) \cdot 1 + (8+l-k) \cdot 4 = 32 - 3k + 2l \leq 32-2k \]
many collections that satisfy (1).

We conclude that there are at most $(2k + 8) + (32 - 2k) = 40$ bad collections of three fibers. There are $\binom{8}{3} = 56 > 40$ collections of three fibers in total, so we can define $S$ to be the image under $\Psi$ of any collection that is not bad. Then $S$ satisfies ($\ast$), which is a contradiction.
\end{proof}
	
\bibliography{Papers}
\bibliographystyle{plain}

\end{document}